\newcommand{\Etx}{{\widetilde E}_\xi}
\newcommand{\Omt}{\widetilde\Omega}
\newcommand{\Qt}{\widetilde Q_\xi}
\newcommand{\Om}{\Omega}
\newcommand{\norm}[1]{{\Vert #1\Vert}}
\newcommand{\abs}[1]{{\vert #1\vert}}
\newcommand{\e}{\varepsilon}
\newcommand{\R}{{\mathbb R}}
\DeclareMathOperator{\tr}{tr}
\DeclareMathOperator{\dist}{dist}
\newtheorem{theorem}{Theorem}[section]
\newtheorem{proposition}[theorem]{Proposition}
\newtheorem{lemma}[theorem]{Lemma}
\newtheorem{corollary}[theorem]{Corollary}
\theoremstyle{definition}
\newtheorem{remark}[theorem]{Remark}
\title{Saturn ring defect around a spherical particle immersed in nematic liquid crystal}
\date{}
\author{Stan Alama
\footnote{Department of Mathematics \& Statistics, McMaster University, Hamilton, ON, L8S 4K1, Canada}
 \and Lia Bronsard
\footnotemark[\value{footnote}] 
\and Dmitry Golovaty  
\footnote{Department of Mathematics, The University of Akron, Akron, OH, 44325, USA}
  \and Xavier Lamy
\footnote{Institut de Math\'ematiques de Toulouse; UMR5219 - Universit\'e de Toulouse; CNRS - UPS IMT, F-31062 Toulouse Cedex 9, France} }
\begin{document}

\maketitle

\begin{abstract} 
We consider a nematic liquid crystal occupying the three-dimensional domain in the exterior of a spherical colloid particle. The nematic is subject to Dirichlet boundary conditions that enforce orthogonal attachment of nematic molecules to the surface of the particle. Our main interest is to understand the behavior of energy-critical configurations of the Landau-de Gennes $Q$-tensor model in the limit of vanishing correlation length.
We demonstrate existence of configurations with a single Saturn-ring defect approaching the equator of the particle  and no other line or point defects.
We show this by analyzing asymptotics of energy minimizers under two symmetry constraints: rotational equivariance around the vertical axis and reflection across the horizontal plane.
Energy blow-up at the ring defect is a significant obstacle to constructing well-behaved comparison maps needed to eliminate the possibility of point defects. The boundary estimates we develop to address this issue are new and
 should be applicable to a wider class of problems.
\end{abstract}

\section{Introduction}

The study of defects in liquid crystals is well-motivated from physical considerations, and is also closely connected to many fundamental questions in analysis and geometry. 
The intimate connection between nematic liquid crystals and $\mathbb{S}^2$-valued harmonic maps is well-established through director-based models such as Oseen-Frank \cite{HKL90}, and a comprehensive study of singularities in nematics will both exploit and expand the rich trove of analytical tools for studying geometrical variational problems.  To better describe nematics in settings involving non-orientability, biaxiality, and the presence of line defects, physicists and mathematicians have turned to the tensorial Landau-de Gennes model, which is in some sense a relaxation of the non-convex constraints of director models.  Indeed, much recent attention has concentrated on recovering the Oseen-Frank director and energy in the vanishing correlation length limit of Landau-de Gennes (see, e.g.,~\cite{baumanparkphillips12,canevari2d,singperturb,golovatymontero14,majumdarzarnescu10,nguyen_zarnescu_13}.)

In this paper we revisit an important model problem, that of a spherical colloid particle immersed in a nematic which fills the exterior domain, approaching a constant uniaxial state at infinity.  We work within the Landau-de Gennes framework, with homeotropic Dirichlet boundary conditions on the colloid surface.  Physicists have long expected that there are two competing candidates for minimizers in this geometry:  an orientable solution with dipolar symmetry, consisting of a single satellite point defect lying on the axis of symmetry, and a non-orientable director pattern having a circular ``Saturn ring'' singularity on the equatorial plane perpendicular to the symmetry axis.  The latter configuration exhibits quadrupolar symmetry.  In the first mathematical treatment of this problem \cite{colloid}, this expectation is confirmed in the case of very small colloids (for which the quadrupolar Saturn ring solution is minimizing,) or for very large colloids (in which, assuming axial symmetry, the dipolar satellite point defect prevails.)  
However, Saturn ring defects have been observed both experimentally and numerically in the physics literature (see, e.g.,~\cite{fukuda2004nematic,PhysRevE.57.610,muvsevivc2017liquid,stark01,terentjev95}) and appear to be energetically favorable in many settings, even for larger particles.
\subsection{Main results}
The goal of this paper is to produce solutions of the spherical colloid problem which exhibit Saturn-ring defects in the limit of small correlation length.  To do this, we minimize the Landau-de Gennes energy in a function space enforcing the expected (quadrupolar) symmetries of such a configuration.
The symmetry hypothesis will ensure the existence of at least one ring defect on the horizontal plane; a much more difficult issue is to eliminate the possibility of other defects (rings or point defects). 
Additional ring defects can be excluded by carefully adapting lower bound techniques developed for the Ginzburg-Landau problem \cite{jerrard99,sandier98,beaulieu_hadiji_98,andre_shafrir_98}. Ruling out point defects, however, presents a new and significant analytical challenge.
In general, determining the precise number of point defect in a three dimensional domain is a difficult task:
compared to line defects, point defects carry a negligible amount of energy and are thus harder to detect using energy estimates. Moreover, unlike line defects, the number of point defects can not be deduced from topological considerations as even topologically trivial boundary conditions may give rise to an arbitrary number of point defects \cite{hardtlin86}. Only very specific examples are known where the number of point defects can be determined (see e.g. \cite{HKL90,breziscoronlieb86,colloid}). In the present work this task is made even harder due to presence of a line defect approaching the boundary in that the boundary conditions are ``destroyed" by energy blow-up at the ring defect. This considerably complicates the  construction of well-behaved comparison maps.
 We overcome this  obstacle by proving a very  precise estimate in the blow-up region at the boundary. 
 This estimate appears to be new, even within the context of some well-studied variational problems (such as Ginzburg-Landau with a weight). 

\medskip

Let us now introduce the Landau-de Gennes functional, and the variational framework which we will use in our study. In nondimensional units the colloid particle is represented by the closed ball of radius one $B=\lbrace \abs{\cdot}\leq 1\rbrace\subset\R^3$, so that the liquid crystal is contained in the domain  $\Omega=\R^3\setminus B$. 
In these units the Landau-de Gennes energy depends on the nematic correlation length $\xi>0$, and is given by
\begin{align}
\label{eq:energy}
E_\xi(Q)&=\int_{\Omega} \left( \abs{\nabla Q}^2 +\frac{1}{\xi^2}f(Q)\right) dx.
\end{align}
The map $Q$ takes values into the space $\mathcal S_0$ of $3\times 3$ symmetric matrices with zero trace and describes nematic alignment.
The nematic potential is given by
\begin{equation*}
f(Q)=-\frac 12\abs{Q}^2-\tr(Q^3)+\frac 34 \abs{Q}^4+C,
\end{equation*}
where the constant $C$ is such that $f$ satisfies
\begin{equation}\label{eq:Ustar}
f(Q)\geq 0\text{ with equality iff }Q\in \mathcal U_\star:=\left\lbrace n\otimes n -\frac 13 I\colon n\in\mathbb S^2\right\rbrace.
\end{equation}
The correlation length $\xi$ is typically small and therefore we are going to be  interested in the limit $\xi\to 0$.

Anchoring at the particle surface is assumed to be radial:
\begin{equation}\label{eq:Qb}
Q=Q_b:=e_r\otimes e_r -\frac 13 I\qquad\text{on }\partial \Omega,\qquad e_r=\frac{x}{\abs{x}}.
\end{equation}
At infinity, the effect of the particle is not felt and the alignment is uniform, given by
\begin{equation}
\label{eq:Qinf}
Q_\infty:=e_3 \otimes e_3 -\frac 13 I.
\end{equation}
More precisely, this far field condition is enforced by considering configurations in the space $\mathcal H$ given by
\begin{equation}\label{eq:H}
\mathcal H :=Q_\infty +\dot {\mathcal H},\qquad
\dot {\mathcal H} :=\left\lbrace Q\in H^1_{loc}(\Omega;\mathcal S_0)\colon \int_\Omega \abs{\nabla Q}^2 + \int_\Omega \frac{\abs{Q}^2}{\abs{x}^2} <\infty \right\rbrace.
\end{equation}
We denote by $\mathcal H_b$ the space of such configurations that satisfy in addition the radial anchoring condition at the particle surface:
\begin{equation}\label{eq:Hb}
\mathcal H_b = 
\left\lbrace Q\in \mathcal H\colon Q\text{ satisfies \eqref{eq:Qb}}
\right\rbrace.
\end{equation}
We seek to construct critical points of $E_\xi$ with the quadrupolar symmetry of the Saturn ring configuration.  This entails two symmetry constraints on the admissible $Q\in \mathcal{H}_b$:
\begin{itemize}
\item rotation symmetry around the vertical axis, 
\item and reflection symmetry across the equatorial plane.
\end{itemize} 
See \eqref{eq:groups} for a precise definition of each, in terms of group actions.
We denote the space of maps in $\mathcal H_b$ satisfying these two symmetry constraints by $\mathcal H_{sym}$, i.e.
\begin{align*}
\mathcal H_{sym} =\lbrace Q\in\mathcal H_b\text{ with quadrupolar symmetry}\rbrace.
\end{align*}
A more complete discussion of the space $\mathcal H_{sym}$ will be given in Section~\ref{s:upper}.
Minimizers of $E_\xi$ in $\mathcal H_{sym}$ do exist, because Hardy's inequality ensures  the coercivity of the energy. Moreover, they are critical points of $E_\xi$ in the full space $\mathcal H$: this is a  consequence of the principle of symmetric criticality \cite{palais79} (see also \cite[Appendix~1]{bifur}). Our main result shows that the energy of a symmetric minimizer concentrates, as $\xi\to 0$, inside a Saturn ring shaped region around the particle. In the limit this region coincides with the equatorial circle
\begin{equation*}
\mathcal C:=\left\lbrace (\cos\varphi,\sin\varphi,0)\colon \varphi\in\R\right\rbrace =\partial B \cap\lbrace x_3=0\rbrace.
\end{equation*}
Since the energy will blow up around $\mathcal C$, the resulting limit configuration will not belong to $\mathcal H_{sym}$. To define the limit space we cut out a small neighborhood of $\mathcal C$, consider the exterior domain
\begin{align*}
\Omega_\delta^{ext} & = \left\lbrace x\in \Omega \colon  \dist(x,\mathcal C)>\delta\right\rbrace,
\end{align*}
and define the limit space $\mathcal H^\star_{sym}=\cap_{\delta >0}\mathcal H^\star_{sym}(\Omega^{ext}_\delta)$, where
\begin{align*}
\mathcal H^\star_{sym}(\Omega^{ext}_\delta)=
\Big\lbrace Q & \in H^1_{loc}(\Omega^{ext}_\delta;\mathcal U_\star) \ \text{with quadrupolar symmetry, s.t.} \\
&\int_{\Omega^{ext}_\delta}\abs{\nabla Q}^2 + \int_{\Omega^{ext}_\delta}\frac{\abs{Q-Q_\infty}^2}{\abs{x}^2}<\infty,\quad\text{and}\quad
 Q=Q_b\text{ for }\abs{x}=1\Big\rbrace.
\end{align*}

We may now state our result asserting the existence and asymptotic behavior of solutions with quadrupolar symmetry:
\begin{theorem}\label{thm:main}
For any $\xi\in (0,1]$ let $Q_\xi$ minimize $E_\xi$ in $\mathcal H_{sym}$. Then we have:
\begin{itemize}
\item[(i)] \emph{upper bound:} there exists a universal constant $C>0$ such that
\begin{equation*}
\frac{1}{2\pi}E_\xi(Q_\xi)\leq \pi\ln\frac 1\xi +  \pi\ln\ln\frac 1\xi + C.
\end{equation*}
\item[(ii)] \emph{lower bound:} there exists a universal constant $C>0$ such that for any $\delta\in (0,1)$, 
\begin{equation*}
\frac 1{2\pi}E_\xi(Q_\xi;\Omega_\delta^{int})\geq \pi\ln\frac 1\xi +   \pi\ln\ln\frac 1\xi - 2\pi \ln\frac 1\delta - C\qquad\text{as }\xi\to 0,
\end{equation*}
where $\Omega_\delta^{int}=\Omega\setminus\Omega_\delta^{ext}=\Omega\cap\lbrace\dist(\cdot,\mathcal C)\leq\delta\rbrace$ is the $\delta$-neighborhood of the ring defect $\mathcal C$.
\item[(iii)] \emph{limit configuration:} there is a subsequence $\xi\to 0$ such that $Q_\xi$ converges in $C^{1,\alpha}_{loc}(\overline\Omega\setminus\mathcal C)$ to a map $Q_\star\in \mathcal H^\star_{sym}$ which is smooth in $\overline\Omega\setminus\mathcal C$, and uniaxial,
\begin{equation*}
Q_\star(x)=n_\star(x)\otimes n_\star(x)-\frac 13 I,\quad n(x)\in\mathbb S^2,
\end{equation*}
where $n_\star$ is a smooth $\mathbb{S}^2$-valued locally minimizing harmonic map in $\Om$.  Furthermore, $n_\star$
satisfies the additional symmetry property that $n_\star(x_1,0,x_3)\perp e_2$.
\end{itemize}
\end{theorem}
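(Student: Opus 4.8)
The plan is to prove Theorem~\ref{thm:main} in three stages, corresponding to the three assertions, with the upper and lower energy bounds serving as the backbone for the compactness argument that yields the limit configuration.

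\emph{Step 1: The upper bound (i).} The strategy is to construct an explicit competitor $Q \in \mathcal H_{sym}$ whose energy matches the claimed bound. Away from the equatorial circle $\mathcal C$, one takes a smooth $\mathbb S^2$-valued map $n$ realizing the expected Saturn-ring director pattern — quadrupolar, equivariant under rotation about the vertical axis, reflection-symmetric across $\{x_3=0\}$, equal to $e_r$ on $\partial B$ and to $e_3$ at infinity — and sets $Q = n\otimes n - \frac13 I$, so that $f(Q)=0$ there. In a tubular neighborhood of $\mathcal C$ of radius comparable to $\xi\ln(1/\xi)$ (this is the natural ``core scale'' that produces the $\pi\ln\ln(1/\xi)$ correction), one interpolates to the Landau--de Gennes core profile of a degree-$\pm\frac12$ (non-orientable, biaxial) line defect. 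The leading term $\pi\ln(1/\xi)$ is the standard Ginzburg--Landau cost per unit length of a line vortex integrated over the length $2\pi$ of $\mathcal C$; the $\pi\ln\ln(1/\xi)$ term arises from the logarithmically divergent ``outer'' contribution cut off at scale $\xi\ln(1/\xi)$ rather than at scale $O(1)$ — the optimization over the core radius is what fixes this scale. One must check the Hardy-type integrals $\int |Q|^2/|x|^2$ converge, which is automatic since the competitor decays to $Q_\infty$ at infinity. The careful choice of core radius is the only subtle point here.

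\emph{Step 2: The lower bound (ii) and the structure of the limit (iii).} For the lower bound one localizes to the $\delta$-neighborhood $\Omega_\delta^{int}$ of $\mathcal C$ and runs a vortex lower-bound argument of Jerrard--Sandier type: slicing by planes (or half-planes) transverse to $\mathcal C$, on each slice the map has a nontrivial degree forced by the symmetry constraints — this is where the hypothesis that reflection symmetry pins a defect on the equator is used — and one extracts the logarithmic lower bound $\pi\ln(\delta/\xi)$ per slice, then integrates over $\mathcal C$; the $\pi\ln\ln(1/\xi)$ refinement comes from a more precise ball-growth/energy-concentration estimate matching the upper bound's core scale. Combining (i) and (ii), the energy \emph{outside} $\Omega_\delta^{int}$ is bounded: $E_\xi(Q_\xi;\Omega_\delta^{ext}) \le C(\delta)$ uniformly in $\xi$. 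This uniform bound on any fixed exterior domain, together with $\frac{1}{\xi^2}\int f(Q_\xi)\le C(\delta)$, gives compactness: along a subsequence $Q_\xi \rightharpoonup Q_\star$ weakly in $H^1_{loc}(\overline\Omega\setminus\mathcal C)$, with $f(Q_\star)=0$ a.e., so $Q_\star$ is $\mathcal U_\star$-valued, hence uniaxial, $Q_\star = n_\star\otimes n_\star - \frac13 I$. Standard $\eta$-compactness / clearing-out results for Landau--de Gennes (or the Ginzburg--Landau analogue) upgrade the convergence to $C^{1,\alpha}_{loc}(\overline\Omega\setminus\mathcal C)$ and show $Q_\star$ is a smooth critical point, i.e.\ $n_\star$ is a smooth harmonic map into $\mathbb S^2$ away from $\mathcal C$; a removable-singularity argument (energy is locally finite near any point not on $\mathcal C$, by the exterior energy bound) shows there are no point defects off $\mathcal C$ and that $n_\star$ extends smoothly across $\overline\Omega\setminus\mathcal C$. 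Local minimality of $n_\star$ follows from the local minimality of $Q_\xi$ by a standard lower-semicontinuity-plus-recovery-sequence argument on compact subsets of $\overline\Omega\setminus\mathcal C$. The extra symmetry $n_\star(x_1,0,x_3)\perp e_2$ is inherited from the quadrupolar symmetry of the $Q_\xi$'s in the limit (the reflection across $\{x_2=0\}$ combined with equivariance forces the director to lie in the $x_1x_3$-plane on that slice).

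\emph{Main obstacle.} The delicate point — and the one the introduction flags as the novel contribution — is that the energy blow-up at $\mathcal C$ \emph{on the boundary} $\partial B$ prevents naive use of the Dirichlet condition $Q=Q_b$ when building comparison maps near $\mathcal C$ (needed both to get the sharp $\ln\ln$ constant and, more importantly, to exclude spurious point defects that might hide in the core region). The resolution requires a precise blow-up estimate in the boundary core region: one must show that, after rescaling by the core length, $Q_\xi$ is close to a model profile on a half-space with the radial anchoring, with quantitative control on the error, so that the degree/energy accounting on transverse slices is not corrupted by boundary effects. This is the step I expect to be hardest and it is presumably carried out in the sections following this statement; everything else is an adaptation of known Ginzburg--Landau and Landau--de Gennes machinery.
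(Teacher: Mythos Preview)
Your outline captures the broad architecture correctly, but there are two genuine gaps in part~(iii), and a misidentification of the mechanism behind the $\ln\ln$ term.

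\textbf{The $\ln\ln(1/\xi)$ scale.} The relevant scale is not a core radius $\sim\xi\ln(1/\xi)$; it is the \emph{distance from the boundary} at which the ring defect sits, namely $\sigma\sim 1/\ln(1/\xi)$. The mechanism is the cylindrical weight $\rho$: a full-disk vortex at distance $\sigma$ from $\partial B$ costs $\sim\tfrac{\pi}{2}(1+\sigma)\ln(\sigma/\xi)$, while the half-annulus between $\sigma$ and $O(1)$ (where the boundary data supply half the winding) costs $\sim\pi\ln(1/\sigma)$. Optimizing $\pi\sigma\ln(1/\xi)+\tfrac{\pi}{2}\ln(1/\sigma)$ gives $\sigma\sim 1/\ln(1/\xi)$ and the $\tfrac{\pi}{2}\ln\ln(1/\xi)$ correction (per half-plane). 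The paper's lower bound mirrors this: one shows the bad set touches $(1,0)$, then finds an intermediate scale $\hat\sigma_\xi$ at which the projected map becomes orientable, and balances the half-annulus bound outside $\hat\sigma_\xi$ against a weighted full-disk bound inside.

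\textbf{Point defects are not removable.} Your sentence ``a removable-singularity argument (energy is locally finite near any point not on $\mathcal C$\ldots) shows there are no point defects off $\mathcal C$'' is the central error. In three dimensions, $\mathbb S^2$-valued minimizing harmonic maps can have isolated singularities with \emph{finite} energy (the hedgehog $x/|x|$ is the model). Energy finiteness on $\Omega_\delta^{ext}$ rules nothing out; a priori the limit $n_\star$ could have a discrete set $Z$ of point defects on the symmetry axis. The paper eliminates $Z$ by a two-step argument: first, if the ring carries the ``correct'' sign ($\tau=+1$ in the paper's notation), a reflection trick $\tilde n=(n_1,n_2,|n_3|)$ combined with analyticity shows $n_3$ cannot change sign, hence $Z=\emptyset$; second, the opposite sign $\tau=-1$ is excluded by proving that the ring's core energy is, to order $o(1)$, independent of its sign, so that the extra pair of point defects a positively charged ring would require carries a strictly positive $O(1)$ cost. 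This core-energy comparison is exactly where the sharp boundary estimate (the paper's Lemma on the wedge domain $\{\rho\ge 1+\lambda z^\beta\}$) is needed.

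\textbf{The symmetry $n_\star\perp e_2$ is not automatic.} Equivariance under $G_{sym}$ forces, on the slice $\{x_2=0\}$, only that $Tn_\star=\pm n_\star$ with $T=\mathrm{diag}(1,-1,1)$, i.e.\ pointwise either $n_2=0$ or $n_1=n_3=0$; it does not select the first alternative. The paper proves $n_2\equiv 0$ by a Sandier--Shafrir comparison with $\tilde n=(\sqrt{n_1^2+n_2^2},0,n_3)$, and the difficulty is again that on $\partial D_\delta^{ext}\cap D$ the boundary values are not prescribed, so a transition layer with controlled excess energy must be built. This step is logically prior to invoking the regularity theory for axially symmetric harmonic maps.
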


\begin{remark}\label{r:limit_symmetry}
Using cylindrical coordinates $(\rho,\varphi,z)$ and corresponding orthonormal frame $(e_\rho,e_\varphi,e_z)$,  the additional symmetry statement in $(iii)$ amounts to  $n_\star(\rho,\varphi,z)\perp e_\varphi$, i.e. $n_\star(\rho,\varphi,z)$ lies in the azimuthal plane generated by $e_\rho$ and $e_z$.
\end{remark}

In proving Theorem~\ref{thm:main} we rely heavily on the symmetry constraint, which reduces the problem to two dimensions.  Indeed, the relevant analogy is to a two-dimensional Ginzburg-Landau energy with a weight $w=\rho$ arising from cylindrical symmetry. The asymptotic behavior of Ginzburg-Landau energies with weights have been studied in \cite{andre_shafrir_98,beaulieu_hadiji_98}.  The principal novelty of this work is that we must deal with $Q$-tensor-valued maps in an unbounded domain rather than complex-valued maps in a bounded domain. Points $(i)$ and $(ii)$ of Theorem~\ref{thm:main} follow from careful adaptation of the techniques in \cite{andre_shafrir_98,beaulieu_hadiji_98}, along with classical Ginzburg-Landau methods in \cite{struwe_94,jerrard99,sandier98}, and more recent arguments for $Q$-tensor-valued maps in \cite{golovatymontero14,canevari2d}.

The most delicate part of Theorem~\ref{thm:main} is the statement $(iii)$ asserting that the limit is smooth everywhere away from the equatorial ring defect $\mathcal C$. Proving this statement amounts to eliminating the possibility of point singularities appearing on the $z$-axis. As it was already pointed out, in general, energy minimization subject to topological constraints does not restrict the number of point defects \cite{hardtlin86}. In the context of our problem, one negatively charged Saturn ring defect is sufficient to compensate the topological constraint imposed by the boundary conditions but so is, for example, a positively charged Saturn ring together with a pair of negatively charged point defects on the $z$-axis. In the process of ruling out existence of such pair of point defects, a crucial ingredient is to show that energy of the core of a ring defect would be the same up to terms of order $o(1)$ in $\xi$, regardless of whether the ring is negatively or positively charged. In other words, the $O(1)$ remainder in the energy asymptotics in Theorem~\ref{thm:main} does not depend on the sign of the ring defect's charge.

Here we encounter an additional difficulty not present in determining the $O(1)$ core energy term for classical Ginzburg-Landau vortices.  Indeed, in \cite{bbh} the authors crucially use the fact that the energy of  Ginzburg-Landau vortices scales radially: at scale $r\gg\xi$, the energy of  a vortex goes as $\ln(r/\xi)=\ln (1/\xi)+\ln r$, and the effect of phase winding is separated from the cost of core formation.  Here, on the other hand, proximity to the boundary breaks this scaling invariance and influences the core shape, as seen by the presence of the $\ln\ln\xi$ term, and makes it much less clear that radial rescaling should reveal a universal $O(1)$ core energy term.

To obtain the core energy estimate, we deform the minimizers in a very narrow wedge domain emanating tangentially from the limiting equatorial defect  (see Lemma~\ref{l:Pxi}). This requires a sharp lower bound on the energy in a small disc tangential to the particle surface at its equator (see Lemma~\ref{l:finelowerboundhalfannulus}). The corresponding core energy estimate is new---to the best of our knowledge. Once the core energy is determined, the added energy cost of an anti-hedgehog pair may be computed thanks to ideas in \cite{colloid} (see Lemma~\ref{l:compareEtau}).

\subsection{Background and relevant numerical results}

The mathematical study of line defects in nematics was initiated in \cite{canevari3d}, in the singular limit as the correlation length $\xi\to 0$, for domains and boundary values which induce defects along line segments.  As mentioned earlier, global minimizers of the spherical colloid problem were first addressed mathematically in \cite{colloid}, in which  the size of the colloid plays a determining role.  
As has long been known by physicists, equatorial ring defects can be observed even around large colloid particles, for example in the presence of external electric or magnetic fields \cite{fukudaetal04, fukudayokoyama06,guabbott00,loudetpoulin01,stark02} or in confinement \cite{lavrentovich2014transport,stark02}.  The situation with a magnetic field was studied mathematically in \cite{colloid2}, via a Landau-de Gennes energy modified to model interaction with a constant  field.  The main result of \cite{colloid2} identifies the leading order term in an expansion of the energy, indicating the presence of an equatorial ring defect rather than a satellite point defect,
provided the magnetic field is high enough $h\gg \xi\:\vert \ln\xi\vert$. In the complementary low magnetic field regime $h\ll\xi\:\vert\ln\xi\vert$, 
the lower bound established here in Theorem~\ref{thm:main} directly implies, in view of upper bounds established in \cite{colloid2}, that minimizers cannot have quadrupolar symmetry, thus hinting at  the presence of a satellite point defect.
The asymptotics of that model are further and more precisely explored in \cite{alouges_chambolle_stantejsky}.

Even in the absence of external factors which appear to favor rings over satellite point defects, much physical evidence, both numerical  \cite{fukuda2004nematic} and formal  \cite{PhysRevE.57.610}, suggests that there is a range of intermediate particle sizes for which configurations with Saturn ring defects may be stable and coexist with  point defect configurations having lower energy.  

We do not consider the important question of stability in this paper, but numerical simulations suggest that the solutions found here may be locally stable. To illustrate these observations, in Figs.~\ref{fig:configurationsa}-\ref{fig:energy} we present the summary of simulations that reproduce and extend the results of \cite{fukuda2004nematic}. We numerically solved in COMSOL \cite{COMSOL} the equations for the gradient flow 
\[\frac{\partial Q}{\partial t}=-\frac{\delta E_\xi}{\delta Q}\]
for the energy $E_\xi$ defined in \eqref{eq:energy} in the domain in the form of a large cylinder with a spherical void of radius $1$ with the same center as that of the cylinder. The admissible $Q$-tensor fields have values in the set of symmetric traceless matrices and are rotationally equivariant with respect to $z$-axis that is also the axis of the cylinder. The $Q$-tensors are subject to the initial condition $Q(\cdot,0)=Q_{init}$ and the boundary conditions \eqref{eq:Qinf} and \eqref{eq:Qb} on the surfaces of the cylinder and the sphere, respectively. 

Following \cite{fukuda2004nematic} and assuming that $(\rho,\varphi)$ are polar coordinates in a plane perpendicular to the axis of the cylinder, the simulations were run starting from two initial conditions 
\begin{align*}
Q_{init}={e}_3\otimes{e}_3-\frac{1}{3}I\qquad\text{and}\qquad
Q_{init}={n}(\psi)\otimes{n}(\psi)-\frac{1}{3}I,
\end{align*}
where ${\bf n}(\psi)=(\cos{\psi}\cos{\varphi},\cos{\psi}\sin{\varphi},\sin{\psi})$ and
\[\psi=2\tan^{-1}\left({\frac{\rho}{z}}\right)-\tan^{-1}\left({\frac{\rho}{z+z_0}}\right)-\tan^{-1}\left({\frac{\rho}{z+z_0^{-1}}}\right).\]
Here the second choice of the initial condition represents an approximation of a nematic configuration with a hyperbolic point defect at distance $z_0$ below the sphere's center \cite{fukuda2004nematic,PhysRevE.57.610}. Note that, for $\xi<0.005$, the simulations leading to an equatorial Saturn ring were started from the critical point obtained for $\xi=0.005$.

Fig.~\ref{fig:configurationsa}-\ref{fig:configurationsb} shows the line fields of the nematic in $(r,z)$-coordinates when $\xi=1/70$. For this choice of the correlation length, the critical point approached by the gradient flow simulation depends on the initial condition; the critical point in Fig.~\ref{fig:configurationsa} has dipolar symmetry, while the critical point Fig.~\ref{fig:configurationsb} is quadrupolar. 
\begin{figure}
\centering
\begin{subfigure}{.25\textwidth}
\centering
  \includegraphics[width=\textwidth]{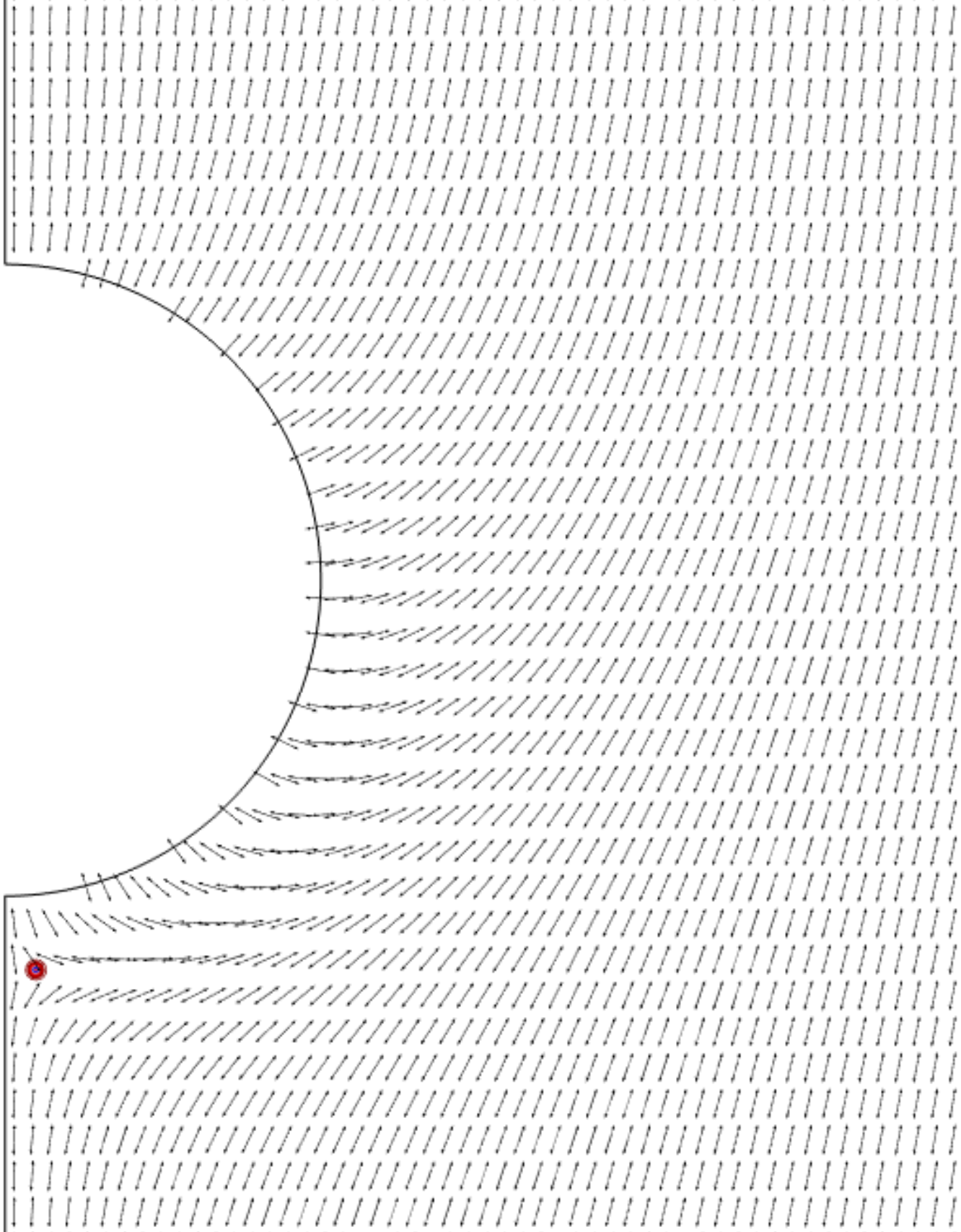}
 \caption{}
\label{fig:configurationsa}
\end{subfigure}
\hspace{1em}
\begin{subfigure}{.26\textwidth}
\centering
  \includegraphics[width=\textwidth]{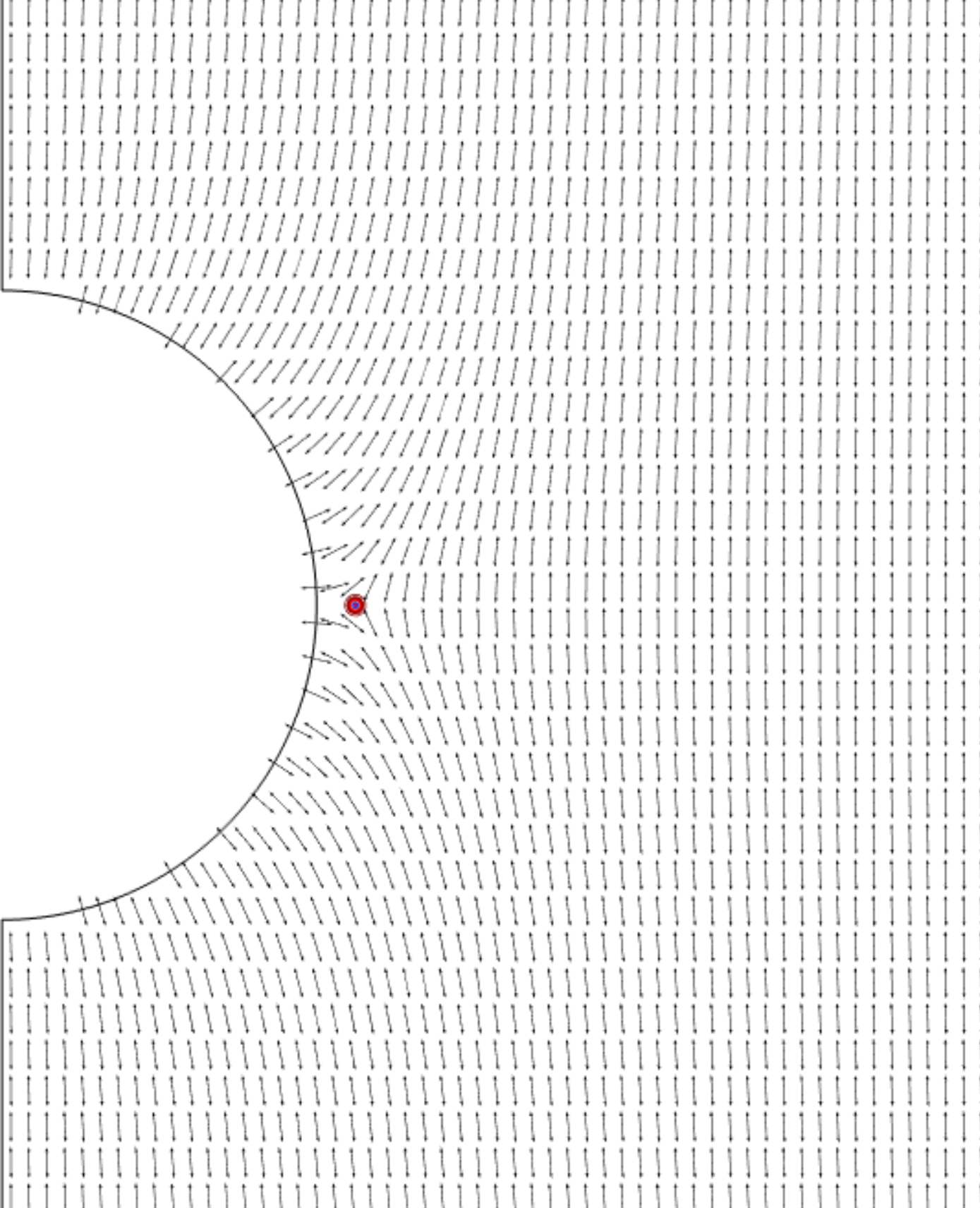}
\caption{}
\label{fig:configurationsb}
\end{subfigure}
\hspace{1em}
\begin{subfigure}{.41\textwidth}
\centering
  \includegraphics[width=\textwidth]{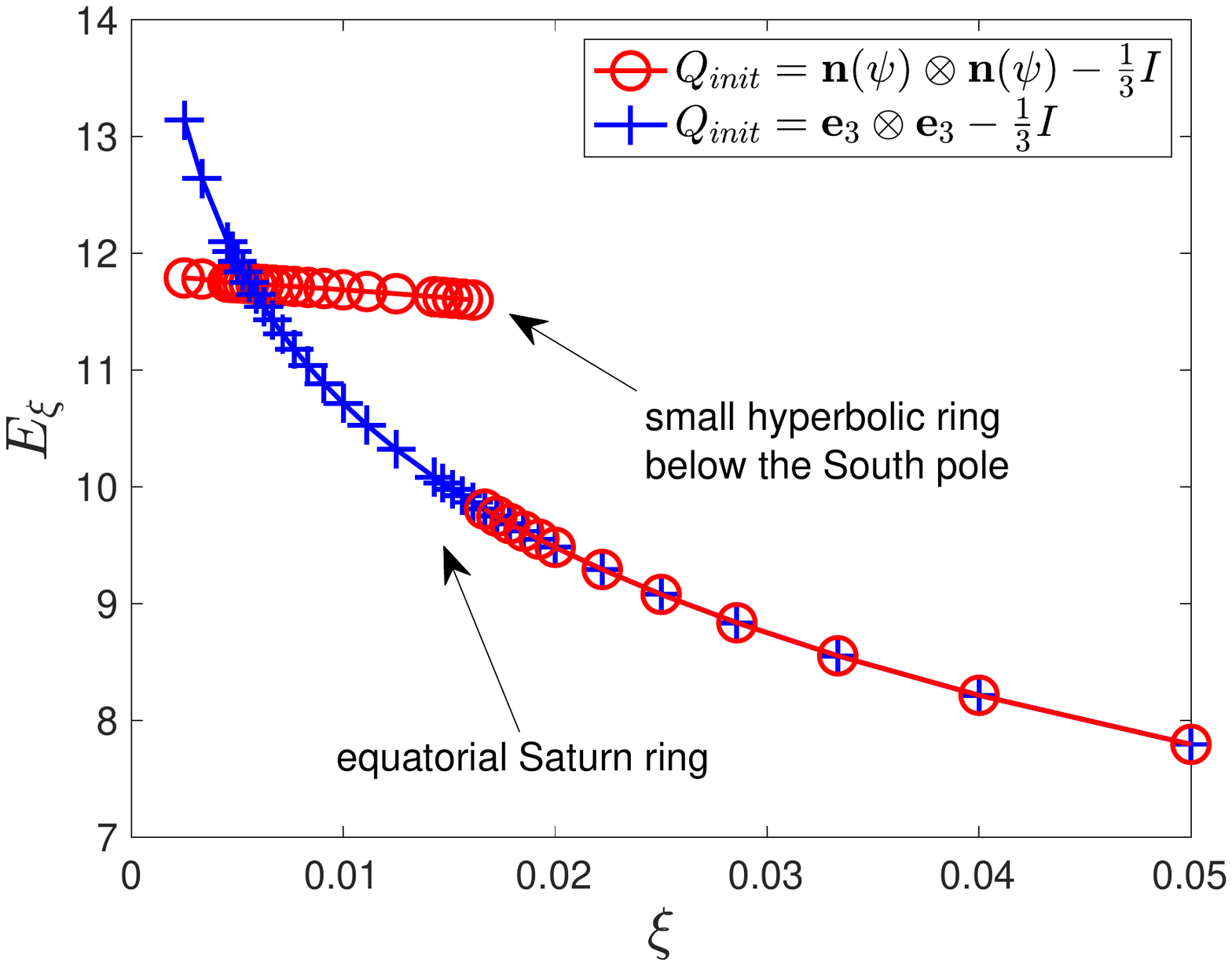}
\caption{}
\label{fig:energy}
\end{subfigure}
\caption{(a-b) Nematic configurations corresponding to critical points for $E_\xi$ when $\xi=1/70$. The red dot marks the location of a Saturn ring. (a) $Q_{init}={n}(\psi)\otimes{n}(\psi)-\frac{1}{3}I$ and the critical point is a small hyperbolic ring below the south pole of the particle. The ring shrinks to a hyperbolic point defect on $z$-axis when $\xi\to0$. (b) $Q_{init}={e}_3\otimes{e}_3-\frac{1}{3}I$ and the critical point is the equatorial Saturn ring. The Saturn ring approaches the surface of the colloid when $\xi\to0$. (c) Energy $E_\xi$ vs nematic correlation length $\xi$. The critical point reached from the constant initial condition (blue) is always the equatorial Saturn ring. The critical points with an equatorial Saturn ring and with a small hyperbolic ring coexist and appear to be stable for all values of $\xi<\xi_c$ for which the simulations were conducted.}
\end{figure}

For larger values of $\xi$, once it exceeds a critical value $\xi_c\approx0.017$, the simulations converge to the equatorial Saturn ring configuration, regardless of the initial conditions. In fact, for the initial condition with a hyperbolic point defect, this defect expands first into a small ring below the south pole of the colloid. This ring then expands and travels up the surface of the colloid, eventually stopping at its equator.

For $\xi<\xi_c$, the dichotomy between the initial conditions persists for all values of $\xi$ for which the simulations were run: the hyperbolic point defect expands into a small ring below the colloid and the constant initial condition leads to the equatorial Saturn ring. Even though the energy of the equatorial ring exceeds the energy of the small ring once $\xi$ becomes sufficiently small, the equatorial ring remains stable. These observations are summarized in Fig.~\ref{fig:energy}.

Although we do not address the question of their minimality, in the present work we establish that Saturn ring-like critical points of the Landau-de Gennes energy indeed do exist for large particles. Here, rather than varying the radius of the particle, we use an equivalent description in which the size of the particle is fixed and the nematic correlation length is assumed to converge to zero.

This paper is organized as follows. 
In Sections~\ref{s:upper} and \ref{s:lower}, we establish the upper and lower bounds given by the statements $(i)$ and $(ii)$ of  Theorem \ref{thm:main}. In Section~\ref{s:lim} we prove the statement $(iii)$ of Theorem~\ref{thm:main}; namely, a sequence of minimizers converges to a limiting map which is smooth away from the ring defect.

\subsubsection*{Acknowledgments} S.A. and L.B. are supported by NSERC (Canada) Discovery Grants, D.G. was supported in part by NSF grant DMS-1729538, and X.L. by ANR project ANR-18-CE40-0023.
We also thank the Erwin Schr\"odinger International Institute for Mathematics and Physics (Vienna) for their kind hospitality 
%during the workshop \emph{New trends in the variational modeling and simulation of liquid crystals} 
in December 2019, when this work was being completed.

\section{Upper bound}\label{s:upper}

To obtain the upper bound, i.e. part $(i)$ of Theorem~\ref{thm:main}, we construct an admissible map $Q\in\mathcal H_{sym}$ whose energy has the expected behavior.

Let us first describe more explicitly what quadrupolar symmetry means.
Symmetries are formalized in terms of the equivariant action of the orthogonal group $O(3)$ on maps $Q\colon \R^3\to\mathcal S_0$, given by
\begin{equation*}
(R\cdot Q)(x)=R Q(R^t x)R^t,\qquad R\in O(3).
\end{equation*}
The energy $E_\xi$ is invariant under this action, consistent with the physical requirement of frame invariance. Here we consider the subgroup
\begin{equation*}
G_{sym}=\left\lbrace R\in O(3)\colon R{e}_3 =\pm e_3\right\rbrace.
\end{equation*}
 This is the largest subgroup of $O(3)$ that maps the space $\mathcal H$ into itself.
Explicitly, $G_{sym}$ is generated by all rotations around the vertical axis $e_3$ and by the reflection with respect to the horizontal plane $\langle e_1,e_2\rangle$, that is,
\begin{align}\label{eq:groups}
G_{sym}=\left\langle \lbrace R_\varphi\rbrace_{\varphi\in\R},S\right\rangle,
\quad
R_\varphi = \left(\begin{array}{ccc}
\cos\varphi & -\sin\varphi & 0 \\
\sin\varphi & \cos\varphi & 0 \\
0&0&1
\end{array}\right),\quad
S  = \left(\begin{array}{ccc}
1 & 0 & 0 \\
0 & 1 & 0 \\
0&0& -1
\end{array}\right).
\end{align}
The critical points we study in this work are minimizers of the energy among symmetric configurations belonging to the space
\begin{equation}\label{eq:Hsym}
\mathcal H_{sym}=\left\lbrace Q\in\mathcal H_b\colon R\cdot Q=Q,\;\forall R\in G_{sym}\right\rbrace,
\end{equation}
 It is natural to use cylindrical coordinates to describe maps in the space $\mathcal H_{sym}$, i.e. coordinates $(\rho,\varphi,z)$ defined by
\begin{equation*}
x=R_\varphi \left(\begin{array}{c} \rho \\ 0 \\ z\end{array}\right) =  \rho R_\varphi e_1 + z e_3.
\end{equation*}
In these coordinates the domain $\Omega$ corresponds to $\lbrace \rho^2+z^2 >1\rbrace$, and maps $Q\in\mathcal H_{sym}$ can be written in the form
\begin{equation}\label{eq:axialsym}
Q(\rho,\varphi,z)=R_\varphi \widetilde Q(\rho,z) R_{\varphi}^t,
\end{equation}
where in addition  $\widetilde Q(\rho,z)=Q(\rho,0,z)$ satisfies the mirror symmetry constraint
\begin{equation}\label{eq:mirrorsym}
\widetilde Q(\rho,-z)=S\widetilde Q(\rho,z)S^t.
\end{equation}
Written in cylindrical coordinates, the energy of a map $Q\in \mathcal H_{sym}$ takes the form
\begin{align*}
\frac 1{2\pi}E_\xi(Q) & =\widetilde E_\xi(\widetilde Q) := \iint_{\widetilde\Omega}\left(\abs{\nabla\widetilde Q}^2 + \frac{1}{\rho^2}\Xi[\widetilde Q] + \frac{1}{\xi^2}f(\widetilde Q) \right)\rho\, d\rho dz,\\
\text{where }\Xi[\widetilde Q]&=\vert \partial_\varphi[R_\varphi\widetilde Q R_\varphi^t ]\vert^2 =8 \widetilde Q_{12}^2 + 2 \widetilde Q_{13}^2+2\widetilde Q_{23}^2 +2(\widetilde Q_{11}-\widetilde Q_{22})^2.
\end{align*}
Note that $\Xi[\widetilde Q]\leq 4\, \mathrm{dist}^2(\widetilde Q,\R Q_\infty),$ where the distance is induced by the Frobenius norm. We will construct $\widetilde Q$ in the region 
\begin{equation*}
D:=\lbrace \rho^2+z^2>1\colon\rho,z>0\rbrace,
\end{equation*}
with boundary constraints
\[{\left\{\begin{aligned}
&\widetilde Q(\rho,z)=e_r\otimes e_r -\frac 13 I, & \text{ for }&\rho^2+z^2=1,\\
&\widetilde Q(\rho,0)=S \widetilde Q(\rho,0) S^t, & \text{ for }&\rho>1,
\end{aligned}\right.}\]
 and then use the mirror symmetry \eqref{eq:mirrorsym} to extend $\tilde Q$ to the entire domain $\Omega$. Note that $\widetilde Q = S\widetilde Q S^t$ implies that $e_3$ is an eigenvector of $\widetilde Q$ on the horizontal axis $\lbrace z=0\rbrace$ in order for the mirror symmetry not to create a jump of $\tilde Q$. 
 
We begin by observing that in all estimates in this and subsequent sections the letter $C$ denotes a generic universal constant. To construct $\widetilde Q$ we first divide $D$ into 3 subdomains, $D=\overline D_1 \cup \overline D_2 \cup \overline D_3,$ where
\[
D_1= D\cap\lbrace \rho^2+z^2>2\rbrace,\quad
D_2= (D \setminus \overline D_1) \cap \lbrace (\rho-1)^2 + z^2 > 1/4\rbrace,\quad
D_3= D\setminus (\overline D_1 \cup \overline D_2).
\]
%\begin{figure}[h]
\begin{center}
\begin{tikzpicture}[scale=.7]

\draw[thick] (2,0) arc (0:90:2);
\draw[thick] (4,0) arc (0:90:4);
\draw[thick] (2,0) -- (6,0);
\draw[thick] (0,2) -- (0,6);
\draw[thick] (3,0) arc (0:105:1);

\draw (3.5,3.5) node {$D_1$};
\draw (1.5,2.5) node {$D_2$};
\draw (2.4,.4) node {$D_3$};

\draw[<->] (2,-.2) -- (3,-.2);
\draw (2.5,-.2) node [below] {$1/2$};
\draw[<->] (-.2,2) -- (-.2,4);
\draw (-.2,3) node [left] {$1$};

\end{tikzpicture}
\end{center}
%\caption{The three subregions $D_1$, $D_2$ and $D_3$.}
%\end{figure}
Let
\begin{equation*}
\widetilde Q \equiv Q_\infty = e_3\otimes e_3-\frac 13 I\qquad\text{ in }D_1,
\end{equation*}
so that
\begin{equation}\label{eq:E_D1}
\widetilde E_\xi(\widetilde Q;D_1)=0.
\end{equation}
Then we define a Lipschitz map $n\colon \partial D_2\to\mathbb S^2$ as follows. We first set
\begin{align*}
n&\equiv e_3 \quad\text{ on }\partial D_2 \setminus \left(\partial D_3 \cup \lbrace \rho^2+z^2=1\rbrace\right),\\
n& = (\rho,0,z) \quad\text{ on }\partial D_2 \cap \lbrace \rho^2+z^2=1\rbrace. 
\end{align*}
To define $n$ on $\partial D_2\cap\partial D_3$ we use polar coordinates $(r,\theta)$ centered in $(1,0)$, that is, given by the relation $\rho+iz=1+re^{i\theta}$, so that
\begin{equation*}
\partial D_2\cap\partial D_3=\lbrace r=1/2,0<\theta<\theta_0:=\pi/2 + \arcsin(1/4)\rbrace.
\end{equation*}
We set
\begin{equation*}
n(1/2,\theta)=(\sin\theta,0,\cos\theta)\quad\text{ for }0<\theta<\pi/2,
\end{equation*}
and in the remaining part of $\partial D_2\cap\partial D_3$ we interpolate linearly. More precisely, at the point $(1/2,\theta_0)$, continuity of $n$ imposes $n=(\cos\theta_1,0,\sin\theta_1)$ where $\theta_1=2\theta_0-\pi = 2\arcsin(1/4)$, so we set
\begin{equation*}
n(1/2,\theta)=(\cos(2\theta-\pi),0,\sin(2\theta-\pi))\quad\text{ for }\pi/2 <\theta <\theta_0.
\end{equation*}

%\begin{figure}[h]
\begin{center}
\begin{tikzpicture}[scale=1]

\draw[thick] (29:2) arc (29:90:2);
\draw[gray,thick] (2,0) arc (0:29:2);
\draw[thick] (4,0) arc (0:90:4);
\draw[thick] (3,0) -- (4,0);
\draw[thick,gray] (2,0) -- (3,0);
\draw[thick] (0,2) -- (0,4);
\draw[thick] (3,0) arc (0:105:1);

\draw[very thick,->] (0,2) node {$\bullet$} -- (0,2.5);
\draw[very thick,->] (0,3) node {$\bullet$} -- (0,3.5);
\draw[thick,->] (0,4) node {$\bullet$} -- (0,4.5);
\draw[thick,->] (4,0) node {$\bullet$} --  ++(0,.5);
\draw[thick,->] (15:4) node {$\bullet$} --  ++(0,.5);
\draw[thick,->] (30:4) node {$\bullet$} --  ++(0,.5);
\draw[thick,->] (45:4) node {$\bullet$} --  ++(0,.5);
\draw[thick,->] (60:4) node {$\bullet$} --  ++(0,.5);
\draw[thick,->] (75:4) node {$\bullet$} --  ++(0,.5);
\draw[thick,->] (3,0) node {$\bullet$} --  ++(0,.5);
\draw[thick,->] (3.5,0) node {$\bullet$} --  ++(0,.5);
\draw[thick,->] (2,0)++(45:1) node {$\bullet$} --  ++(45:.5);
\draw[thick,->] (2,1) node {$\bullet$} --  ++(.5,0);
\draw[thick,->] (29:2) node {$\bullet$} --  ++(29:.5);
\draw[thick,->] (50:2) node {$\bullet$} --  ++(50:.5);
\draw[thick,->] (70:2) node {$\bullet$} --  ++(70:.5);

\draw (2.1,2.4) node {$n\lfloor\partial D_2$};

\end{tikzpicture}
\end{center}

%\caption{The map $n$ on $\partial D_2$.}
%\end{figure}

The map $n\colon \partial D_2\to\mathbb S^2$ thus defined can be written in the form
\begin{equation*}
n=(\cos\varphi,0,\sin\varphi),\qquad \varphi\in\mathrm{Lip}(\partial D_2,\R).
\end{equation*}
Thus, considering an $H^1$ extension of $\varphi$ to $D_2$ we obtain $n\colon D_2\to\mathbb S^2$ with $\int_{D_2}\abs{\nabla n}^2\leq C$ and moreover, thanks to Hardy's inequality and since $n=e_3$ on $\partial D_2\cap\lbrace \rho=0\rbrace$, we have also $\int_{D_2}\rho^{-2}\abs{n- e_3}^2\leq C$. Then we set
\begin{equation*}
\widetilde Q=Q_n=n\otimes n-\frac 13 I\qquad\text{ in }D_2,
\end{equation*}
and, since $f(Q_n)=0$ and $\Xi[Q_n]\leq C \abs{n-e_3}^2$, we have 
\begin{equation}\label{eq:E_D2}
\widetilde E_\xi(\widetilde Q;D_2)\leq C.
\end{equation}

Next we need to define $\widetilde Q$ in $D_3$. We introduce a parameter $\sigma>0$ with $\xi<\sigma<1/8$, and further divide $D_3$ into 3 subdomains:
\begin{align*}
D_4&= D_3\cap \lbrace (\rho - 1)^2+z^2 >(4\sigma)^2\rbrace, \\
D_5&=(D_3\setminus \overline D_4)\cap \lbrace (\rho - 1-2\sigma)^2+z^2 >\sigma^2\rbrace,\\
 D_6 &=D_3\setminus (\overline D_4\cup\overline D_5).
\end{align*}

%\begin{figure}[h]
\begin{center}
\begin{tikzpicture}[scale=.75]

\draw[thick] (0,0) arc (0:35:8);
\draw[thick] (4,0) arc (0:104.5:4);
\draw[thick] (0,0) -- (4.4, 0);

\draw[thick] (2,0) arc (0:98:2);

\draw[thick] (1.5,0) arc (0:180:.5);

\draw[->] (0,0) -- ++ (106:4);
\draw (-1,2) node {{\scriptsize $1/2$}};

\draw[->] (0,0) -- ++(70:2);
\draw (.3,1.5) node { {\scriptsize $4\sigma$} };

\draw[->] (1,0) -- ++ (70:.5);
\draw (1.25,.2) node {{\scriptsize $\sigma$}};

\draw (1,0) node [below] {$D_6$};

\draw (2,2) node {$D_4$};
\draw (1,1) node {$D_5$};

\end{tikzpicture}
\end{center}

%\caption{The three subregions $D_4$, $D_5$ and $D_6$.}
%\end{figure}

In the polar coordinates $(r,\theta)$ centered at $(1,0)$, the domain $D_4$ is given by
\begin{equation*}
D_4=\lbrace 1+re^{i\theta}\colon 4\sigma < r <1/2, 0<\theta<\theta_0(r):=\pi/2 + \arcsin(r/2)\rbrace.
\end{equation*}
In $D_4$ we define $\widetilde Q$ as 
\begin{equation*}
\widetilde Q=Q_n = n\otimes n -\frac 13 I,
\end{equation*}
for some map $n\colon D_4\to\mathbb S^2$. The boundary conditions on $\lbrace \rho^2+z^2=1\rbrace$ impose
\begin{equation*}
n(r,\theta_0(r))=(\cos \theta_1(r),0,\sin\theta_1(r)),\quad\theta_1(r):=2\theta_0(r)-\pi,
\end{equation*}
and we define
\begin{equation*}
n(r,\theta)=\begin{cases}
(\sin\theta,0,\cos\theta) &\text{ for }0<\theta<\pi/2,\\
(\cos(2\theta-\pi),0,\sin(2\theta-\pi))&\text{ for }\pi/2<\theta<\theta_0(r).
\end{cases}
\end{equation*}
That way we have
\begin{align*}
\int_{D_4}\abs{\nabla \widetilde Q}^2 \, \rho \, d\rho\, dz &
=2 \int_{D_4}\abs{\nabla n}^2 \,\rho \, d\rho\, dz \\
& = 2 \int_{4\sigma}^{1/2}\int_0^{\theta_0(r)}\frac{1}{r^2}\abs{\partial_\theta n}^2 (1+r\cos\theta)\, r  \, d\theta\, dr \\
&= 2 \int_{4\sigma}^{1/2}\int_0^{\pi/2}\frac{1+r\cos\theta}{r} \, d\theta\, dr\\
&\quad
+ 8 \int_{4\sigma}^{1/2}\int_{\pi/2}^{\theta_0(r)}\frac{1+r\cos\theta}{r} \, d\theta\, dr\\
& \leq \pi\ln\frac 1\sigma + C.
\end{align*}
Moreover, in $D_4$ we have $f(\widetilde Q)=0$ and $\rho^{-2}\Xi[Q]\leq C$, and therefore
\begin{equation}\label{eq:E_D4}
\widetilde E_\xi(\widetilde Q;D_4)\leq \pi\ln\frac 1\sigma +C.
\end{equation}
In the subdomain $D_5$ we are again going to define $\widetilde Q$ from a unit vector field $n\colon D_5\to\mathbb S^2$. There we use polar coordinates $(s,\phi)$ centered at $(1+2\sigma,0)$, that is, given by $\rho+iz = 1+2\sigma + s e^{i\phi}$. In these coordinates the domain $D_5$ is of the form
\begin{equation*}
D_5=\left\lbrace 1+2\sigma + s e^{i\phi}\colon 0<\phi<\pi,\, \sigma<s<\bar s(\phi)\right\rbrace,
\end{equation*}
where $\bar s(\phi)$ is a Lipschitz function of $\phi$, with $2\sigma \leq \bar s \leq 5\sigma$. On the part of $\partial D_5$ given by $\lbrace s=\bar s(\phi)\rbrace$, the values of $n$ are given by the boundary conditions on $\lbrace \rho^2+z^2=1\rbrace$ and on $\partial D_4$, and are of the form
\begin{equation*}
n(\bar s(\phi),\phi)=(\cos\alpha(\varphi), 0, \sin\alpha(\varphi))\qquad\text{ for }\phi\in (0,\pi),
\end{equation*}
for Lipschitz function $\alpha\colon [0,\pi]\to\R$, which satisfies $\alpha(0)=\pi/2$ and $\alpha(\pi)=0$. On the part of $\partial D_5$ given by $\lbrace s=\sigma\rbrace$, we set
\begin{equation*}
n(\sigma,\phi)=(\sin(\phi/2),0,\cos(\phi/2))\qquad\text{ for }\phi\in (0,\pi).
\end{equation*}
Then we define $n$ in $\partial D_5$ by interpolating in the $s$ variable, i.e. we set
\begin{align*}
n(s,\phi)&=(\cos \beta(s,\phi),0,\sin\beta(s,\phi)),\\
\beta(s,\phi)&=  \frac1 2\frac{\bar s(\phi)-s}{\bar s(\phi)-\sigma}\left(\pi-\phi\right) + \frac{s-\sigma}{\bar s(\phi)-\sigma}\alpha(\phi),\qquad\text{ for }\phi \in (0,\pi).
\end{align*}
Note that, by continuity, and since $n(\sigma,0)=e_3 = n(\bar s(0),0)$ and $n(\sigma,\pi)=e_1 = n(\bar s(\pi),\pi)$, this forces the trace of $n$ on $\partial D_5\cap\lbrace z=0\rbrace$, to be given by
\begin{equation*}
n(\rho,0)=\begin{cases}
e_1 &\text{ for }1<\rho<1+\sigma,\\
e_3 &\text{ for }1+3\sigma <\rho <1+4\sigma.
\end{cases}
\end{equation*}
Moreover, since it is directly checked that $\abs{\partial_s n} + \abs{\partial_\phi n}\leq C$, we deduce 
\begin{equation*}
\int_{D_5}\abs{\nabla n}^2 \leq C \int_{\sigma}^{5\sigma} \frac{ds}{s} \leq C.
\end{equation*}
Therefore, setting $\widetilde Q=Q_n$ in $D_5$ we obtain
\begin{equation}\label{eq:E_D5}
\widetilde E_\xi(\widetilde Q;D_5)\leq C.
\end{equation}
Finally, in $D_6$ we define $\widetilde Q$ in polar coordinates $(s,\phi)$ centered at $(1+2\sigma,0)$ as above, by
\begin{align*}
\widetilde Q & = \lambda(s)Q_n, \qquad 
n =(\sin(\phi/2),0,\cos(\phi/2)),\qquad
\lambda(s)=\min(1,s/\xi).
\end{align*}
Then we have
\begin{align*}
\int_{D_6}\abs{\nabla\widetilde Q}^2 \rho\, d\rho\, dz &
\leq C +2 \int_\xi^\sigma \int_0^\pi \frac{\abs{\partial_\phi n}^2}{s^2} (1+2\sigma + s\cos\phi) s\, d\phi\, ds \\
& \leq C + \frac\pi 2 (1+2\sigma)\ln\frac\sigma\xi \\
&\leq \frac \pi 2 \ln\frac \sigma\xi  + \pi \sigma\ln\frac 1\xi + C,
\end{align*}
and therefore, since $f(\widetilde Q_n)=0$ for $s>\xi$ and $\leq C$ for $s< \xi$, we deduce that
\begin{equation}\label{eq:E_D6}
\widetilde E_\xi(\widetilde Q;D_6)\leq \frac \pi 2 \ln\frac \sigma\xi  + \pi \sigma\ln\frac 1\xi +C.
\end{equation}
Gathering \eqref{eq:E_D1}-\eqref{eq:E_D6}, we obtain
\begin{equation*}
\widetilde E_\xi(\widetilde Q;D)\leq \frac\pi 2\ln\frac 1\xi + \frac{\pi}{2}\ln\frac 1\sigma + \pi\sigma\ln\frac 1\xi + C.
\end{equation*}
Optimizing in $\sigma$, we are led to choosing
\begin{equation*}
\sigma = \frac{1}{2\ln\frac 1\xi},
\end{equation*}
and conclude that
\begin{equation*}
\widetilde E_\xi(\widetilde Q;D)\leq \frac\pi 2\ln\frac 1\xi + \frac{\pi}{2} \ln\ln \frac 1\xi + C,
\end{equation*}
which, upon applying the mirror symmetry, proves part $(i)$ of Theorem~\ref{thm:main}.

\section{Lower bound}\label{s:lower}

In this section we prove the lower bound. We use the notation $\lesssim$ to denote inequality up to a universal multiplicative constant. We denote by $\widetilde Q_\xi$ the map defined by
\begin{equation*}
Q_\xi(\rho,\varphi,z)=R_\varphi \widetilde Q_\xi(\rho,z) R_{\varphi}^t,
\end{equation*}
which satisfies in addition the mirror symmetry \eqref{eq:mirrorsym}. The map $\widetilde Q_\xi$ is thus defined in 
\begin{equation*}
\widetilde \Omega = \lbrace (\rho,z)\colon \rho^2+z^2>1,\, \rho>0\rbrace,
\end{equation*}
uniquely determined by its values in the region
\begin{equation*}
D=\lbrace \rho^2+z^2>1\colon\rho,z>0\rbrace,
\end{equation*}
and minimizes
\begin{equation*}
\widetilde E_\xi(\widetilde Q) = \int_{D}\left(\abs{\nabla\widetilde Q}^2 + \frac{1}{\rho^2}\Xi[\widetilde Q] + \frac{1}{\xi^2}f(\widetilde Q) \right)\rho\, d\rho dz
\end{equation*}
under the boundary constraints
\begin{align*}
\widetilde Q(\rho,z)&= e_r\otimes e_r -\frac 13 I\qquad\text{ for }\rho^2+z^2=1,\qquad\text{where } e_r = (\rho,0,z),\\
\text{ and }\widetilde Q(\rho,0)&=S \widetilde Q(\rho,0) S^t\qquad\text{ for }\rho>1.
\end{align*}
For any $X\in \widetilde\Omega$ we will denote by $B(X,r)$ the disc of radius $r>0$ centered at $X$.

Note that our potential $f$ satisfies
\begin{equation}\label{eq:fdistU*}
f(Q)\lesssim \dist^2(Q,\mathcal U_\star)\lesssim f(Q)\qquad\text{for }\abs{Q}\lesssim 1,
\end{equation}
hence we may fix $\eta>0$ such that in the region $\lbrace Q\in\mathcal S_0\colon  f(Q) < 2 \eta\rbrace$, the nearest neighbor projection $\pi$ onto the smooth submanifold $\mathcal U_\star$ is well defined and smooth.

\begin{lemma}\label{l:firstbounds}
We have
\begin{equation*}
\norm{Q_\xi}_{L^\infty(\Omega)}\lesssim 1\quad\text{and}\quad \norm{\nabla Q_\xi}_{L^\infty}\lesssim \frac 1\xi.
\end{equation*}
\end{lemma}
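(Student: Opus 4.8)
The plan is to prove both bounds by the classical elliptic-regularity/rescaling argument for Ginzburg-Landau--type functionals, adapted to the $Q$-tensor setting and to the unbounded domain $\Omega$. First I would establish the $L^\infty$ bound. Since $Q_\xi$ minimizes $E_\xi$ in $\mathcal H_{sym}$ and, by symmetric criticality, is a critical point of $E_\xi$ in the full space $\mathcal H$, it solves the Euler--Lagrange system $\Delta Q_\xi = \tfrac{1}{\xi^2} \nabla_{\mathcal S_0} f(Q_\xi)$ (the projection of $\nabla f$ onto traceless symmetric matrices). The key algebraic fact is that $f$ is coercive: $f(Q) = \tfrac34 \abs{Q}^4 + \text{lower order}$, so $\nabla f(Q)\cdot Q \geq c\abs{Q}^4 - C$ for large $\abs{Q}$, and in particular $\nabla f(Q)\cdot Q > 0$ once $\abs{Q}$ exceeds some universal radius $M_0$. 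I would then show that the scalar function $u = \abs{Q_\xi}^2$ is a subsolution of a good equation on the set $\{u > M_0^2\}$: indeed $\tfrac12 \Delta u = \abs{\nabla Q_\xi}^2 + \tfrac{1}{\xi^2}\nabla f(Q_\xi)\cdot Q_\xi \geq 0$ there. Since $u = \abs{Q_b}^2 = 2/3$ on $\partial\Omega$ and $u \to \abs{Q_\infty}^2 = 2/3$ at infinity (because $Q_\xi - Q_\infty \in \dot{\mathcal H} \hookrightarrow$ suitable decay, or at least $\abs{Q_\xi}$ stays bounded near infinity by the finiteness of the Hardy term), the maximum principle forces $u \leq \max(M_0^2, 2/3)$ throughout $\Omega$, giving $\norm{Q_\xi}_{L^\infty} \lesssim 1$ with a universal constant. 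Care is needed at infinity: one should either test against $(\abs{Q_\xi}^2 - M_0^2)_+$ using that this is an admissible variation with compact support after truncation, or invoke that $\dot{\mathcal H}$-maps have vanishing mean oscillation at infinity; either way the argument is standard once the subsolution property is in hand.

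Next, for the gradient bound, I would use the interior elliptic estimate combined with a rescaling. Fix $x_0 \in \Omega$ and set $r_0 = \min(\tfrac12\dist(x_0,\partial\Omega), \xi)$ if $x_0$ is far from the boundary, and handle boundary points separately using the smoothness of $\partial\Omega$ and of $Q_b$. Define the rescaled map $\hat Q(y) = Q_\xi(x_0 + \xi y)$ on the unit ball (or half-ball) $B_1$; it satisfies $\Delta \hat Q = \nabla_{\mathcal S_0} f(\hat Q)$ with $\norm{\hat Q}_{L^\infty} \lesssim 1$ from the first part. Since $f$ is smooth and $\hat Q$ is bounded, the right-hand side is bounded in $L^\infty(B_1)$, so $L^p$ elliptic estimates give $\hat Q \in W^{2,p}(B_{1/2})$ for all $p$, hence $\hat Q \in C^{1,\alpha}(B_{1/2})$ with $\norm{\nabla \hat Q}_{L^\infty(B_{1/2})} \lesssim 1$ universally. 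Rescaling back yields $\abs{\nabla Q_\xi(x_0)} \lesssim 1/\xi$. For boundary points $x_0 \in \partial\Omega$, the boundary datum $Q_b = e_r\otimes e_r - \tfrac13 I$ is smooth on the smooth surface $\partial\Omega$, so after flattening the boundary the same rescaling argument applies with boundary elliptic estimates (the Dirichlet data, rescaled, converge to a smooth function with controlled $C^{1,\alpha}$ norm since $\xi \leq 1$), and one again gets the $1/\xi$ bound; when $\dist(x_0,\partial\Omega) \ll \xi$ one uses a ball of radius $\dist(x_0,\partial\Omega)$ reaching the boundary. Patching the interior and boundary estimates gives $\norm{\nabla Q_\xi}_{L^\infty(\Omega)} \lesssim 1/\xi$.

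The main obstacle I anticipate is the behavior at infinity rather than the regularity theory, which is routine. Specifically, one must make sure the maximum principle for $\abs{Q_\xi}^2$ is legitimate on the unbounded domain $\Omega$ — that no mass escapes to infinity and that the competitor obtained by truncating $\abs{Q_\xi}$ is genuinely admissible in $\mathcal H_{sym}$ (it is, since capping the length of $Q_\xi$ at a constant $> \abs{Q_\infty}$ does not destroy the $\dot{\mathcal H}$ integrability, the boundary condition, or the symmetries, and strictly decreases the potential term while not increasing the Dirichlet term — this is the standard truncation lemma). A small secondary point is the boundary gradient estimate when the ring defect later forces energy blow-up at $\mathcal C$; but here we only claim the crude universal bound $\abs{\nabla Q_\xi} \lesssim 1/\xi$, which holds everywhere including near $\mathcal C$ precisely because it comes from the $\xi$-scale rescaling and the uniform $L^\infty$ bound, and needs no information about energy concentration. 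Thus the proof reduces to: (1) subsolution property of $\abs{Q_\xi}^2$ via the Euler--Lagrange equation and coercivity of $f$; (2) maximum principle plus truncation admissibility for the $L^\infty$ bound; (3) $\xi$-rescaling plus interior and boundary $W^{2,p}$ estimates for the gradient bound.
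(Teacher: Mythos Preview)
Your proposal is correct and follows essentially the same approach as the paper: the paper simply cites \cite[Lemma~5]{colloid} for the $L^\infty$ bound (which is exactly the truncation/maximum-principle argument you outline) and dispatches the gradient bound with the phrase ``rescaled elliptic estimates,'' which is precisely your $\xi$-rescaling plus $W^{2,p}$ argument. You have supplied the details the paper omits, including the correct identification of the only delicate point (behavior at infinity, handled by the admissibility of the radial truncation in $\mathcal H_{sym}$).
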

\begin{proof}
The $L^\infty$ bound is proved in \cite[Lemma~5]{colloid}, and the gradient bound follows from rescaled elliptic estimates.
\end{proof}

Away from the vertical axis of symmetry the energy is almost two-dimensional, and we exploit this observation to use $\eta$-compactness methods developed for the Ginzburg-Landau functional by Struwe \cite{struwe_94}.

\begin{lemma}\label{l:boundpotentialsmallballs}
For any $\alpha\in (0,1]$ there exists $C_\alpha>0$ such that for any $X=(\rho_0,z_0)\in \widetilde\Omega\cap\lbrace \rho\geq \frac 12\rbrace$,
\begin{equation*}
\int_{\widetilde\Omega\cap B(X,\xi^\alpha)}\left[ \frac{1}{\rho^2}\Xi(\Qt) 
+\frac{1}{\xi^2} f(\tilde Q) \right]\rho \, d\rho dz\leq C_\alpha.
\end{equation*}
\end{lemma}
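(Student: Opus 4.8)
The plan is to adapt Struwe's $\eta$-ellipticity / small-energy regularity argument for the Ginzburg–Landau functional to our weighted, $Q$-tensor-valued setting. The key point is that on the region $\lbrace\rho\geq\frac12\rbrace$ the weight $\rho$ and the coefficient $1/\rho^2$ in the $\Xi$-term are both comparable to universal constants on any disc $B(X,\xi^\alpha)$ (since $\xi^\alpha\to 0$), so the energy $\widetilde E_\xi$ restricted to such a disc is, up to universal multiplicative constants, a genuine two-dimensional Ginzburg–Landau energy for $\mathcal S_0$-valued maps with potential $f$. First I would reduce to a normalized scale: set $r=\xi^\alpha$, rescale $\widetilde Q_\xi$ on $B(X,r)$ by $x\mapsto X+\xi y$ (so that on the unit disc the rescaled map $\widehat Q$ has $\abs{\nabla\widehat Q}\lesssim 1$ by Lemma~\ref{l:firstbounds}, and solves, up to lower-order weight corrections, the Euler–Lagrange system with $\e=1$). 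The upper bound Theorem~\ref{thm:main}(i) gives $\widetilde E_\xi(\widetilde Q_\xi)\lesssim \ln\frac1\xi\cdot$(bounded), which in particular bounds the total energy on any fixed ball away from $\mathcal C$ but is far too weak locally; the real input has to come from a clearing-out / monotonicity mechanism.

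The main steps, in order, would be: (1) Establish an $\eta$-compactness statement: there is $\e_0>0$, universal, such that if $\widetilde E_\xi(\widetilde Q_\xi;B(X,r))\leq \e_0\ln(r/\xi)$ for a disc with $\rho\geq\frac12$ on it, then $f(\widetilde Q_\xi)<\eta$ on $B(X,r/2)$, hence $\widetilde Q_\xi$ maps into the tube where the projection $\pi$ onto $\mathcal U_\star$ is smooth, and moreover $\frac1{\xi^2}\int_{B(X,r/2)}f(\widetilde Q_\xi)\,\rho\,d\rho\,dz\lesssim 1$. This follows from the rescaled elliptic estimates of Lemma~\ref{l:firstbounds} together with a Bochner-type inequality for $f(\widetilde Q_\xi)$: one shows $\xi^2\Delta f(\widetilde Q_\xi)\geq f(\widetilde Q_\xi)/C - C\xi^2\abs{\nabla\widetilde Q_\xi}^2\rho^{-2}$ modulo the weight, from which pointwise smallness of $f$ propagates. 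The $\Xi$-term is controlled here because $\Xi[\widetilde Q]\lesssim \dist^2(\widetilde Q,\R Q_\infty)$ and, once $f(\widetilde Q)<\eta$, $\dist(\widetilde Q,\R Q_\infty)$ is controlled by $\dist(\widetilde Q,\mathcal U_\star)^{1/2}$-type bounds plus the $H^1$ energy. (2) Combine this with a covering/iteration argument: partition a slightly larger disc into dyadic annuli and use the upper bound to show that on all but a controlled number of scales the hypothesis of step (1) holds, so that on $B(X,\xi^\alpha)$ itself the potential is below $\eta$ and the integral $\frac1{\xi^2}\int f(\widetilde Q_\xi)\rho$ is bounded. (3) For the $\Xi$-term on $B(X,\xi^\alpha)$, once the map lies in the good tube, use that it is close to $\mathcal U_\star$, write $\widetilde Q_\xi = \pi(\widetilde Q_\xi) + O(f^{1/2})$, and estimate $\int \rho^{-2}\Xi$ by $\int \rho^{-2}\dist^2(\widetilde Q_\xi,\R Q_\infty)\lesssim \int\abs{\nabla\widetilde Q_\xi}^2 + \int\rho^{-2}\abs{\widetilde Q_\xi-Q_\infty}^2$ via Hardy-type control transported to the small disc, which is $\lesssim 1$ because the disc has diameter $\xi^\alpha\ll 1$ and sits in $\lbrace\rho\geq\frac12\rbrace$; alternatively, and more robustly, bound $\int_{B(X,\xi^\alpha)}\rho^{-2}\Xi(\widetilde Q_\xi)\rho\lesssim \xi^{2\alpha}\norm{\Xi(\widetilde Q_\xi)}_{L^\infty}\lesssim 1$ directly from Lemma~\ref{l:firstbounds}.

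The main obstacle I anticipate is step (1): making the $\eta$-ellipticity argument genuinely uniform in $\xi$ and in the center $X$, in the presence of the weight $\rho$ and the extra zeroth-order term $\rho^{-2}\Xi[\widetilde Q]$, which is not a standard Ginzburg–Landau potential (it vanishes only on $\R Q_\infty$, not on all of $\mathcal U_\star$). One must check that this term does not obstruct the clearing-out estimate — it only helps, being nonnegative, but it must be handled carefully in the Bochner inequality and in the rescaling, since after blow-up at scale $\xi$ it carries a factor $\xi^2$ and a bounded coefficient and hence is a harmless perturbation on the unit disc. A secondary technical point is that near $\partial\widetilde\Omega$ (the sphere $\rho^2+z^2=1$) one needs the boundary version of the $\eta$-ellipticity estimate with the Dirichlet data $e_r\otimes e_r-\frac13 I$; this is classical for Ginzburg–Landau and carries over, using that the boundary data are smooth and the domain has $C^\infty$ boundary, so the rescaled problem near the boundary converges to a half-space problem with smooth boundary data. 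Once these uniform local regularity estimates are in place, the bound claimed in the lemma is immediate, with $C_\alpha$ depending only on $\alpha$ through the number of dyadic scales between $\xi$ and $\xi^\alpha$.
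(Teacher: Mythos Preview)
Your handling of the $\Xi$-term is fine --- indeed the crude bound $\int_{B(X,\xi^\alpha)}\rho^{-1}\Xi[\widetilde Q_\xi]\lesssim \xi^{2\alpha}\|\widetilde Q_\xi\|_{L^\infty}^2$ from Lemma~\ref{l:firstbounds} already does the job. The gap is in the $f$-term. Your steps (1)--(2) conflate two different things: $\eta$-ellipticity (which, under a small-energy hypothesis on a ball, yields \emph{pointwise} smallness $f<\eta$) and the \emph{integral} bound $\xi^{-2}\int f\lesssim 1$ that the lemma asserts. The Bochner inequality you propose does not convert the former into the latter without already controlling $\int_{B(X,r)}|\nabla\widetilde Q_\xi|^2$, and that quantity is a priori only $\lesssim\ln(1/\xi)$. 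Your dyadic covering in step (2) does not close either: small energy on annuli surrounding $B(X,\xi^\alpha)$ says nothing about $\int_{B(X,\xi^\alpha)}f/\xi^2$. In fact the logical order in the paper is the reverse of what you propose: the present lemma is an \emph{input} to the $\eta$-compactness statement (Lemma~\ref{l:pointwiseboundpotential}), not a consequence of it.

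The paper's argument, following Struwe, rests on two ingredients you have omitted. First, a Fubini/mean-value step: since $\widetilde E_\xi(\widetilde Q_\xi)\lesssim\ln\frac1\xi$ and
\[
\widetilde E_\xi(\widetilde Q_\xi)\geq \int_{\xi^\alpha}^{\xi^\beta} F(r)\,\frac{dr}{r},\qquad F(r):=r\int_{\partial B(X,r)\cap\widetilde\Omega} e_\xi(\widetilde Q_\xi)\,\rho\,ds,
\]
there exists $r_\xi\in(\xi^\alpha,\xi^\beta)$ with $F(r_\xi)\leq C$. Second --- and this is the missing idea --- a Pohozaev identity for the Euler--Lagrange system, obtained by testing against $(x-X)\cdot\nabla\widetilde Q_\xi$ with the weight $\rho$ on $B(X,r_\xi)\cap\widetilde\Omega$. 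This yields directly
\[
\int_{B(X,r_\xi)\cap\widetilde\Omega}\left(\frac{1}{\rho^2}\Xi[\widetilde Q_\xi]+\frac{1}{\xi^2}f(\widetilde Q_\xi)\right)\rho\;\leq\; C\,F(r_\xi)+o(1),
\]
which is exactly the mechanism that trades boundary flux for interior potential; neither Bochner nor clearing-out provides a substitute. The weight $\rho$ and the $\Xi$-term enter the Pohozaev computation only as lower-order perturbations, since $|\rho-\rho_0|<r_\xi\leq\xi^\beta\ll 1$ on the ball; the Dirichlet boundary on $\partial\widetilde\Omega$ contributes a harmless $O(r_\xi^2)$. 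If you wish to avoid Pohozaev, the only genuine alternative is a comparison argument using minimality on the good circle $\partial B(X,r_\xi)$ (extend the trace inward with energy $\lesssim F(r_\xi)$), but you did not propose that either, and it requires care with the symmetry constraint.
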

\begin{proof}  Fix any $X=(\rho_0,z_0)\in \widetilde\Omega\cap\lbrace \rho\geq \frac 12\rbrace$.
Define
$$  F(r)=F(r; X):= r\int_{\partial B(X,r)\cap \widetilde\Om}
   \left[ |\nabla \Qt|^2 + \frac{1}{\rho^2}\Xi(\Qt) +\frac{1}{\xi^2} f(\Qt)\right] \rho \, ds_r,
$$
where $ds_r$ denotes arclength measure on $\partial B(X,r)$.

Fix any $\beta\in (0,\alpha)$.  As in the proof of \cite[Lemma~3.3 (i)]{struwe_94}, by Fubini's Theorem there exists $r_\xi\in (\xi^\alpha,\xi^\beta)$ for which
\begin{align*}
\Etx (\Qt) \ge \Etx(\Qt; B(X,\xi^\beta) )
    \ge \int_{\xi^\alpha}^{\xi^\beta} F(r) \frac{dr}{r} 
  \ge (\beta-\alpha) F(r_\xi) \ln \frac 1\xi.
\end{align*}
In particular,
\begin{equation}\label{eq:Fbound}
F(r_\xi) \le  (\beta-\alpha) \frac{ \Etx(\Qt; B(X,\xi^\beta)) }{ |\ln\xi|}
   \le (\beta-\alpha) \frac{ \Etx(\Qt) }{ |\ln\xi|}\le C_1.
\end{equation}

To obtain the desired bound we require a version of the Pohozaev identity.  We treat our problem as if it were two dimensional, with domain $\Omt$ and 
 a nonconstant weight function $w=\rho$.  Consider a solution $u$ of the equation
$$   -\frac{1}{ w}\nabla\cdot w\nabla u + \partial_u g(x,u)=0 \qquad x\in \R^2.  $$
For a smooth vector field $Y$, we multiply by $Y\cdot\nabla u$ and integrate by parts on $D\subset\Omt$, to obtain:
\begin{multline*}
\int_{D} \left[ \frac12 \nabla\cdot(w Y) |\nabla u|^2 - w\, (\partial_j Y_k) \partial_j u \partial_k u \right] dx  \\
\qquad +  \int_{D} \left[  g(x,u)\nabla\cdot (wY) 
+ \partial_x g(x,u)\cdot wY\right] dx \\
= \int_{\partial D} \left[ g(x,y) Y\cdot\nu + \frac12 Y\cdot\nu |\nabla u|^2
    - (\nabla u\cdot \nu)(Y\cdot \nabla u)\right] w\, ds.
\end{multline*}

In our case, we choose the domain $D=D_r= B(X,r)\cap\widetilde\Om$, and the vector field $Y=(\rho-\rho_0,z-z_0)$.  The Euler-Lagrange equations have the form
\begin{equation}\label{ELeq}  -\Delta \Qt + \frac 12 \partial_Q \left(\frac{1}{\rho^2}\Xi(\Qt) 
    + \frac{1}{ \xi^{2}} f(\Qt)\right)= L(\Qt),  
\end{equation}
where $L=\frac 16 |\widetilde Q|^2 \mathbf{I}$ is a Lagrange multiplier due to the vanishing trace condition.  Since the test functions $Y\cdot\nabla \Qt$ are trace-free, (as observed in \cite[Remark 4.3]{canevari2d},) $L(\Qt)$ plays no role in the resulting identity, and so we may take $g(x,u)= g(\rho, \Qt)= \frac{1}{2\rho^2}\Xi(\Qt) 
    + \frac{1}{2\xi^{2}} f(\Qt)$.  Substituting in the above identity, with $u=\widetilde{Q}_{\xi,ij}$ and summing over $i,j$, we obtain:
\begin{multline}\label{Id2}
I_1:=\int_{D_r} (3\rho-\rho_0) g(\rho,\Qt)d\rho\, dz + 
 \int_{D_r} \left[ \frac12 |\nabla\Qt|^2 
   -\frac{1}{\rho^2} \Xi(\Qt) \right](\rho-\rho_0)\, d\rho\, dz \\
=  \int_{\partial D_r} \left[ g(\rho,\Qt) (Y\cdot\nu) + \frac12 (Y\cdot\nu) |\nabla \Qt|^2
   - (\nabla\Qt\cdot\nu)(Y\cdot\nabla \Qt)\right] \rho\, ds_r=: I_2.
\end{multline}

Recalling $X=(\rho_0,z_0)$ with $\rho_0\ge \frac12$, we will apply the above identity (here and in the next lemma) for $r\in (\xi,\xi^\beta)$, and so for $\xi$ sufficiently small the domain $D_r$ will be strongly starshaped, in the sense that $Y\cdot \nu\ge \frac{r}{4}$ on $\partial D_r$.   Following \cite[Lemma 2.3 (ii)]{struwe_94}, the right-hand side of \eqref{Id2} may be estimated as:
\begin{equation}\label{I2}
   I_2 \le C\, F(r) + C\, r\int_{\partial\Omt\cap B(X,r)} |\nabla Q_b|^2\, ds
    \le C\, F(r) + O(r^2),
\end{equation}
with constant $C$ independent of $X$, and recalling $\Qt=Q_b$ on $\partial B(0,1)$.
For the left-hand side, we note that in $D_r$, $|\rho-\rho_0|<r\le 3\rho\xi^\beta$, and hence
$$  I_1 \ge 2\int_{D_{r}} g(\rho,\Qt) \, \rho \, d\rho\, dz - O(\xi^\beta |\ln\xi|).   $$
Thus, we have for any $r\in (\xi,\xi^\beta)$,
\begin{equation}\label{nicebound}
 \int_{D_{r}} g(\rho,\Qt) \, \rho \, d\rho\, dz \le C\, F(r) + O(\xi^\beta |\ln \xi|).  
\end{equation}
Choosing $r=r_\xi$ as in \eqref{eq:Fbound}, and noting $B(X,\xi^\alpha)\subset B(X,r_\xi)$, the desired inequality is established. 
\end{proof}

The following is an adaptation of the $\eta$-compactness ($\eta$-ellipticity) condition to our setting.
\begin{lemma}\label{l:pointwiseboundpotential}
There exists $\gamma>0$ such that, for any $\alpha\in (0,1]$ there is $\xi_0(\alpha)>0$ with the following property. If $\xi\in (0,\xi_0)$ and $r\in [\xi,\xi^\alpha]$ are such that
\begin{equation*}  F(r;X) \leq \gamma \rho(X)\qquad\text{for some }X\in \widetilde\Omega\cap\lbrace \rho\geq \frac 12\rbrace,
\end{equation*}
then $f(\widetilde Q)\leq\eta$ in $B_r(X)\cap \widetilde\Omega$.
\end{lemma}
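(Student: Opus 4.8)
The plan is to prove this $\eta$-ellipticity statement by a blow-up / $\varepsilon$-regularity argument adapted from Struwe's treatment of the two-dimensional Ginzburg-Landau functional \cite{struwe_94}, but carried out for $\mathcal S_0$-valued maps and with the weight $w=\rho$. First I would rescale: given $X=(\rho_0,z_0)$ with $\rho_0\geq\frac12$ and $r\in[\xi,\xi^\alpha]$, set $V(y)=\widetilde Q(X+ry)$ on $\widetilde\Omega_r:=r^{-1}(\widetilde\Omega-X)\cap B(0,1)$. Then $V$ solves the rescaled Euler-Lagrange system with potential parameter $r/\xi\geq1$, the weight becomes $w_r(y)=\rho_0+r y_1$ which is comparable to $\rho_0$ on $B(0,1)$ since $r\leq\xi^\alpha\ll\rho_0$, and the smallness hypothesis reads $\int_{\partial B(0,1)\cap\widetilde\Omega_r}\big(|\nabla V|^2+\tfrac1{\rho^2}\Xi(V)+\tfrac{\xi^2}{r^2}\text{(pot.)}\big)w_r\,ds\leq C\gamma\rho_0$. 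Dividing through by $\rho_0$, and using Lemma~\ref{l:firstbounds} which gives $\|V\|_{L^\infty}\lesssim1$ and $\|\nabla V\|_{L^\infty}\lesssim r/\xi$, one gets uniform $L^\infty$ and $W^{1,\infty}$ control of $V$ on a full ball (or half-ball, in the boundary case) with the normalized boundary energy on $\partial B(0,1)$ at most $C\gamma$.

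The key step is then a mean-value / monotonicity estimate: I would use the Pohozaev-type identity already established in the proof of Lemma~\ref{l:boundpotentialsmallballs} (inequality \eqref{nicebound}), applied at the scale $r$ itself, to bound $\int_{D_r}g(\rho,\widetilde Q)\rho\,d\rho\,dz\lesssim F(r;X)+O(\xi^\beta|\ln\xi|)\lesssim\gamma\rho_0+o(1)$, i.e. the total potential energy in $B_r(X)$ is small relative to $\rho_0 r^2$. Combined with the gradient bound $|\nabla\widetilde Q|\lesssim1/\xi$ and a covering/iteration argument — or more directly, a clean elliptic $\varepsilon$-regularity lemma for the rescaled system $-\Delta V+\tfrac12\partial_V g_r(V)=L(V)$ with $g_r$ having a strictly positive-definite Hessian transverse to $\mathcal U_\star$ near $\mathcal U_\star$ (recall \eqref{eq:fdistU*} and the choice of $\eta$ making the nearest-point projection $\pi$ smooth on $\{f<2\eta\}$) — one deduces that if the scaled energy on $\partial B(0,1)$ is below a universal threshold, then $f(V)\leq\eta$ on all of $B(0,1)\cap\widetilde\Omega_r$, which unscales to $f(\widetilde Q)\leq\eta$ on $B_r(X)\cap\widetilde\Omega$. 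The constant $\gamma$ is fixed by this threshold, and $\xi_0(\alpha)$ is chosen so that the error terms $O(\xi^\beta|\ln\xi|)$ (with $\beta\in(0,\alpha)$ as in the previous lemma) and the deviation of $w_r$ from the constant $\rho_0$ are negligible.

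I expect the main obstacle to be the boundary case, i.e. when $B(X,r)$ touches $\partial\widetilde\Omega=\{\rho^2+z^2=1\}$: there one cannot use interior regularity alone, and must invoke the Dirichlet condition $\widetilde Q=Q_b$ together with boundary elliptic estimates. This is exactly the situation handled in \cite[Lemma~3.2]{struwe_94}, and the adaptation requires that the boundary datum $Q_b$ be smooth (which it is, being the restriction of $e_r\otimes e_r-\frac13 I$ to a sphere away from the axis) and that the term $r\int_{\partial\widetilde\Omega\cap B(X,r)}|\nabla Q_b|^2\,ds=O(r^2)$ — already recorded in \eqref{I2} — be absorbed. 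The mirror-symmetry portion of the boundary, $\{z=0\}$, is a free boundary on which $\widetilde Q$ reflects by $S\widetilde Q S^t$, so it can be treated by even reflection, reducing to the interior case. A secondary technical point is keeping track of the weight: since $w_r(y)=\rho_0+ry_1$ oscillates by at most $r\leq\xi^\alpha$, all weighted quantities differ from their constant-weight counterparts by a factor $1+O(\xi^\alpha)$, so the weight genuinely plays no role at this scale and the argument is, after rescaling, a standard flat $\varepsilon$-regularity statement; the care is only in verifying this uniformly in $X$ with $\rho_0\geq\frac12$.
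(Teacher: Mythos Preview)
Your proposal is correct in spirit and identifies the two key ingredients the paper actually uses: the Pohozaev-type bound \eqref{nicebound} on the potential energy in $D_r$, and the gradient bound $|\nabla\widetilde Q|\lesssim 1/\xi$ from Lemma~\ref{l:firstbounds}. However, the paper's proof is far shorter than the rescaling/$\varepsilon$-regularity/iteration scheme you outline. It is a two-line contradiction: assume $f(\widetilde Q(X'))>\eta$ at some $X'\in B(X,r)$; by the Lipschitz bound, $f(\widetilde Q)>\eta/2$ on $B(X',c\xi)$, so $\int_{B(X',c\xi)\cap\widetilde\Omega}\xi^{-2}f(\widetilde Q)\,\rho\,d\rho\,dz\geq C_0\rho(X')$; but \eqref{nicebound} bounds this same integral by $C\,F(r)+o(1)\leq C\gamma\rho(X)$, a contradiction for $\gamma<C_0/C$ since $\rho(X')\approx\rho(X)$. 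No rescaling to the unit ball, no elliptic regularity lemma, and no iteration are needed --- the ``covering'' step collapses to checking a single ball of radius $c\xi$. Your discussion of the boundary cases and the weight is accurate but likewise unnecessary at this level: the Pohozaev identity \eqref{Id2}--\eqref{nicebound} already handles the portion of $\partial D_r$ on $\partial\widetilde\Omega$, and the weight enters only through the trivial observation $\rho(X')\sim\rho(X)$.
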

\begin{proof}  The proof is as in \cite[Lemma~2.3(ii)]{struwe_94}.
Suppose the contrary:  there exists $X'=(\rho',z')\in B(X,r)$ for which $f(\Qt(X'))>\eta$.  By Lemma~\ref{l:firstbounds}, there exists $c>0$ for which $f(\Qt(x))>\eta/2$ for all $x\in B(X',c\xi)$.  Thus, there is a constant $C_0>0$, independent of $X,\xi$,  for which 
\begin{equation}\label{contradict}
\int_{B(X',c\xi)\cap\Omt} \frac{1}{\xi^2} f(\Qt)\, \rho\, d\rho\, dz \ge C_0\rho(X').
\end{equation}
On the other hand, by \eqref{nicebound} we then have
$$  C_0\rho(X')\le  \int_{B(X',c\xi)\cap\Omt} \frac{1}{\xi^2} f(\Qt)\, \rho\, d\rho\, dz
   \le \int_{D_{r}} g(\rho,\Qt) \, \rho \, d\rho\, dz \le C\, F(r) \le\gamma\rho(X).  $$
For any $\gamma<C_0$ this is impossible, as $|\rho(X')-\rho(X)| < r \ll 1$, and hence the conclusion must hold.
\end{proof}

As in the Ginzburg-Landau case, we may now define the ``bad balls'' which contain the eventual defects:

\begin{lemma}\label{l:boundedbadballs}
There exist $M_0\in\mathbb N$ and $A_0\geq 1$ such that 
\begin{equation*}
\lbrace f(\widetilde Q_\xi)>\eta\rbrace\subset \lbrace \rho\leq A_0\rbrace,
\end{equation*}
and for any disjoint collection of balls $\lbrace B(X_j,\frac\xi 5)\rbrace_{j\in J}$ with centers
\begin{equation*}
X_j\in \lbrace f(\widetilde Q_\xi)>\eta\rbrace \cap \lbrace \rho\geq \frac 12\rbrace,
\end{equation*}
 the cardinality of $J$ must be $\leq M_0$.
\end{lemma}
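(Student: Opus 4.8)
The first containment is essentially a far-field decay statement. Since $Q_\xi\in\mathcal H_{sym}\subset\mathcal H$, one has $\int_\Omega|\nabla Q_\xi|^2+\int_\Omega |Q_\xi|^2/|x|^2<\infty$, so $Q_\xi\to Q_\infty$ in an averaged sense as $|x|\to\infty$; combined with the uniform $L^\infty$ and gradient bounds of Lemma~\ref{l:firstbounds}, a standard Morrey/mean-value argument on unit balls shows $Q_\xi\to Q_\infty$ uniformly at infinity, hence $f(\widetilde Q_\xi)\to 0$ there. This gives $\{f(\widetilde Q_\xi)>\eta\}\subset\{\rho^2+z^2\leq R_0^2\}$ for some fixed $R_0$, and by the mirror symmetry and the confinement of the action near $\mathcal C$ one can further restrict to $\{\rho\leq A_0\}$; in fact the only subtlety is making the decay quantitative uniformly in $\xi$, which follows because the energy bound $\widetilde E_\xi(\widetilde Q_\xi)\leq C|\ln\xi|$ is not scale-invariant but the tail $\int_{\{\rho^2+z^2>R^2\}}|\nabla\widetilde Q_\xi|^2\to 0$ uniformly in $\xi$ can be extracted from the upper bound construction in Section~\ref{s:upper}, where the comparison map equals $Q_\infty$ on $D_1$ and minimality transfers this.

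The bound on $\#J$ is the main point and follows the classical Ginzburg--Landau vortex-ball argument of Struwe, adapted via the Pohozaev estimates just established. Fix $\alpha=1$ (say), and suppose $\{B(X_j,\xi/5)\}_{j\in J}$ is a disjoint collection with centers $X_j\in\{f(\widetilde Q_\xi)>\eta\}\cap\{\rho\geq\frac12\}$. For each $j$, apply Fubini on the annular range $r\in(\xi,\xi^\beta)$ exactly as in the proof of Lemma~\ref{l:boundpotentialsmallballs} to select a good radius $r_j\in(\xi,\xi^\beta)$ with $F(r_j;X_j)\leq C_1$; then Lemma~\ref{l:pointwiseboundpotential}, applied in the contrapositive, forces $F(r_j;X_j)>\gamma\rho(X_j)\geq\gamma/2$, since $f(\widetilde Q_\xi(X_j))>\eta$ rules out the alternative conclusion $f(\widetilde Q_\xi)\leq\eta$ on $B(X_j,r_j)$. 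Thus each good circle carries energy at least $c_0:=\gamma/2>0$. Because the balls $B(X_j,\xi/5)$ are disjoint and $r_j\geq\xi$, a Besicovitch-type covering/separation argument (again as in \cite{struwe_94}) shows the annuli $\partial B(X_j,r_j)$ can be chosen so that the corresponding energy contributions are, up to a fixed overlap multiplicity, additive; summing,
\begin{equation*}
c_0\,\#J\;\lesssim\;\sum_{j\in J}F(r_j;X_j)\;\lesssim\;\widetilde E_\xi(\widetilde Q_\xi)\;\lesssim\;|\ln\xi|,
\end{equation*}
which only gives $\#J\lesssim|\ln\xi|$, \emph{not} a uniform bound. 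To upgrade this to a constant $M_0$ independent of $\xi$, one replaces the bound $F(r_j)\leq C_1$ by the sharper observation that the energy on an annulus $B(X_j,\xi^\beta)\setminus B(X_j,\xi)$ around a genuine defect is at least $(\text{const})\,|\ln\xi|$: indeed on such an annulus $\widetilde Q_\xi$ takes values near $\mathcal U_\star$ (by Lemma~\ref{l:pointwiseboundpotential}, outside the core), carries a nontrivial winding, and the weight $\rho\asymp\rho(X_j)\geq\frac12$ is bounded below, so the standard lower bound for an $\mathbb{RP}^2$-valued map with a topological charge gives $\widetilde E_\xi(\widetilde Q_\xi;\text{annulus})\geq \kappa(\beta-\alpha)|\ln\xi|$ with $\kappa>0$ universal. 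Since the annuli $B(X_j,\xi^\beta)$ have controlled overlap (the centers are $\xi/5$-separated and all radii equal $\xi^\beta$, so any point lies in at most $C\xi^{-3\beta}$... ) — here one must instead argue that a fixed \emph{proportion} of the $X_j$ have pairwise disjoint annuli at some intermediate scale, via a pigeonhole on dyadic scales in $(\xi,\xi^\beta)$ — one concludes $\#J\cdot\kappa'|\ln\xi|\lesssim|\ln\xi|$, hence $\#J\leq M_0$.

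The main obstacle is precisely this last upgrade from a $|\ln\xi|$-bound to a $\xi$-independent bound on the number of bad balls: it requires showing each bad ball forces a \emph{full} $c\,|\ln\xi|$ of energy (not just $O(1)$) and that these energy amounts live on essentially disjoint regions. The cleanest route is a scale-decomposition (vortex-ball construction à la Jerrard--Sandier \cite{jerrard99,sandier98}) merging bad balls at growing scales and tracking the total degree, using that the relevant target is $\mathcal U_\star\cong\mathbb{RP}^2$ so that a defect costs a quantized amount of energy per dyadic annulus; the weight $\rho$ being comparable to a positive constant throughout $\{\rho\geq\frac12\}$ means the weighted and unweighted arguments coincide up to constants, so no new difficulty arises from the $\rho\,d\rho\,dz$ measure. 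The one genuinely non-routine check is that the merging stops before scale $1$ while still below the energy ceiling $C|\ln\xi|$, which pins down $M_0$ and $A_0$ simultaneously.
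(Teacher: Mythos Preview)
Your proposal has genuine gaps in both assertions.

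\textbf{First assertion.} Your far-field decay argument is a detour, and the uniformity in $\xi$ is not actually established: the claim that ``minimality transfers'' the vanishing of the comparison map on $D_1$ to a uniform tail bound for $\widetilde Q_\xi$ is a handwave. The paper's route is direct and requires nothing beyond what is already in Lemmas~\ref{l:boundpotentialsmallballs}--\ref{l:pointwiseboundpotential}: for any bad point $X'$ with $\rho(X')\ge\tfrac12$, the bound \eqref{contradict} gives $\int_{B(X',c\xi)}\xi^{-2}f(\widetilde Q_\xi)\,\rho\ge C_0\rho(X')$, while the Pohozaev estimate \eqref{nicebound} at the Fubini radius $r_\xi$ of \eqref{eq:Fbound} bounds the same integral by $CF(r_\xi)\le C'$. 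Hence $\rho(X')\le C'/C_0=:A_0$. No decay at infinity is needed.

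\textbf{Second assertion.} Your upgrade from $\#J\lesssim|\ln\xi|$ to a uniform bound rests on the claim that each bad point ``carries a nontrivial winding'' on surrounding annuli. This is unjustified and in general false: a point where $f(\widetilde Q_\xi)>\eta$ may perfectly well sit in a topologically trivial island (e.g.\ a near-isotropic blob), and nothing in the hypotheses forces nonzero degree. The Jerrard--Sandier machinery you invoke does not manufacture topology where there is none.

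The correct mechanism---and the one the paper uses via Struwe---is purely analytic. The contrapositive of Lemma~\ref{l:pointwiseboundpotential} says: if $X_j$ is bad, then $F(r;X_j)>\gamma\rho(X_j)\ge\gamma/2$ for \emph{every} $r\in[\xi,\xi^\alpha]$, not just for one good radius. Integrating $F(r;X_j)/r$ over $r\in[\xi,\xi^\alpha/2]$ gives
\[
\widetilde E_\xi\bigl(\widetilde Q_\xi;\,B(X_j,\tfrac{\xi^\alpha}{2})\setminus B(X_j,\xi)\bigr)\;\ge\;\tfrac{\gamma}{2}(1-\alpha)|\ln\xi|-C,
\]
with no topology invoked. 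Now take a maximal $\xi^\alpha$-separated subset $\{Y_k\}\subset\{X_j\}$; the balls $B(Y_k,\xi^\alpha/2)$ are disjoint, so summing against the global upper bound $\widetilde E_\xi\le C|\ln\xi|$ gives a uniform bound on $\#\{Y_k\}$. Finally, each $B(Y_k,\xi^\alpha)$ contains at most $2C_\alpha/C_0$ of the disjoint balls $B(X_j,c\xi)$, by combining Lemma~\ref{l:boundpotentialsmallballs} with \eqref{contradict}. Multiplying the two bounds yields $M_0$.

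A minor additional slip: your inequality $\sum_j F(r_j;X_j)\lesssim\widetilde E_\xi$ is not valid as written, since the $F(r_j;X_j)$ are circle integrals scaled by $r_j$, not contributions to a fixed area integral; there is no reason for them to be additive even with a Besicovitch argument.
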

\begin{proof}
For the first assertion, let $X'\in S_\xi:=\{x\in\Omt: \ f(\Qt(x))>\eta)\}$.  As in the proof of Lemma~\ref{l:pointwiseboundpotential}, there exists $c>0$ for which $f(\Qt(x))>\eta/2$ in $B(X',c\xi)$, with the same lower bound \eqref{contradict}.  Taking $r=r_\xi$ as in 
\eqref{eq:Fbound}, we obtain $C_0\rho(X')\le CF(r_\xi)\le C'$, and hence $\rho(X')$ is uniformly bounded in $\xi$.

The second assertion now follows as in the proof of \cite[Lemma~3.2]{struwe_94}.  Indeed, since for all $X\in S_\xi$, $\frac12\le \rho(X)\le A_0$ by the first assertion, the value of $\gamma$ in Lemma~\ref{l:pointwiseboundpotential} may be chosen independently of $X\in S_\xi$ to give an upper bound which is uniform in $X$.  Similarly,  the lower bound \eqref{contradict} on the potential term in balls $B(X,c\xi)$, may be chosen independently of $X\in S_\xi$.  Hence, the identical arguments (based on Vitali covering of $S_\xi$) as in \cite{struwe_94} assure the bounded cardinality of the collection of ``bad balls'' $\lbrace B(X_j,\frac\xi 5)\rbrace_{j\in J}$.
\end{proof}
Recall that given any Lipschitz simply connected bounded domain $R\subset\R^2$ and any continuous map $U\colon \partial R\to \mathcal U_\star$ we can consider its homotopy class in $\pi_1(\mathcal U_\star)\approx\mathbb Z/2\mathbb Z$. A loop is trivial in $\pi_1(\mathcal U_\star)$ if and only if it is orientable, i.e. it is of the form $\gamma\otimes \gamma-\frac 13 I$, for some continuous loop $\gamma\colon \mathbb S^1\to\mathbb S^2$.

\begin{lemma}\label{l:nontrivialrectangle}
Consider for some $z_0\in (0,1/2]$ and $\rho_0\geq A_0$ the domain 
\begin{equation*}
R=\lbrace \abs{z}<z_0,\; \rho< \rho_0\rbrace\cap \widetilde\Omega.
\end{equation*}
and assume that $f(\widetilde Q_\xi)\leq \eta$ on $\partial R$ and that $\widetilde Q_\xi$ restricted to $\partial R$ is continuous. Then the projected map
\begin{equation*}
U_\xi=\pi(\widetilde Q_\xi)\colon\partial R\to \mathcal U_\star,
\end{equation*}
has non trivial homotopy class in $\pi_1(\mathcal U_\star)$, that is,
 $U_\xi$ is non-orientable.
\end{lemma}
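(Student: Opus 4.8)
The plan is to track the homotopy class of $\pi(\widetilde Q_\xi)$ along $\partial R$ by decomposing $\partial R$ into the portions where $\widetilde Q_\xi$ coincides with explicit boundary data and comparing with a reference non-orientable loop. The domain $R$ is a rectangle $\{|z|<z_0,\ \rho<\rho_0\}\cap\widetilde\Omega$, so $\partial R$ consists of: (a) the inner circular arc $\{\rho^2+z^2=1\}$, where $\widetilde Q_\xi=e_r\otimes e_r-\frac13 I$ with $e_r=(\rho,0,z)$; (b) the two horizontal segments $\{z=\pm z_0,\ 1\le\rho\le\rho_0\}$ (suitably truncated near the inner arc), on which $\widetilde Q_\xi$ need not be explicit but satisfies $f(\widetilde Q_\xi)\le\eta$; and (c) the outer vertical segment $\{\rho=\rho_0,\ |z|\le z_0\}$, which lies in $\{\rho\ge A_0\}$ and hence, by Lemma~\ref{l:boundedbadballs}, in the region $\{f(\widetilde Q_\xi)\le\eta\}$. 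Since $f(\widetilde Q_\xi)\le\eta$ on all of $\partial R$, the projection $U_\xi=\pi(\widetilde Q_\xi)$ is a well-defined continuous loop into $\mathcal U_\star$, and it suffices to compute its class in $\pi_1(\mathcal U_\star)\cong\mathbb Z/2\mathbb Z$.

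The key step is to exhibit an explicit homotopy of $U_\xi$, inside the class of $\mathcal U_\star$-valued loops, to a standard loop which is manifestly non-orientable. First I would use the outer vertical segment: because $\{\rho\ge A_0\}$ contains no bad balls, the map $\widetilde Q_\xi$ is close to $\mathcal U_\star$ there, and more importantly one can run the same starshaped Pohozaev/$\eta$-compactness estimates (Lemmas~\ref{l:boundpotentialsmallballs}--\ref{l:pointwiseboundpotential}) outward to control oscillation; combined with the far-field condition $\widetilde Q_\xi\to Q_\infty$ and the energy bound, $U_\xi$ is homotopic (rel endpoints) on the outer segment to the constant $Q_\infty=e_3\otimes e_3-\frac13 I$ once $\rho_0$ is taken large and $\xi$ small. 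Similarly, on each horizontal segment the mirror-symmetry constraint $\widetilde Q_\xi(\rho,0)=S\widetilde Q_\xi(\rho,0)S^t$ at $z=0$ forces $e_3$ to be an eigenvector there, and the $\eta$-closeness to $\mathcal U_\star$ together with the constraint pins the director in the $(e_1,e_3)$ plane up to homotopy; I would argue the horizontal segments contribute a contractible arc connecting the value of $U_\xi$ at the inner arc endpoint $(\rho,z)=(1,0)$-region to $Q_\infty$. Thus the whole of $U_\xi$ is homotopic to the loop consisting of the inner hemicircle data $e_r\otimes e_r-\frac13 I$, $e_r=(\cos\theta,0,\sin\theta)$ for $\theta\in[-\pi/2,\pi/2]$, closed up by a contractible path through $Q_\infty$ in the region $\{f\le\eta\}$. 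This reference loop is exactly (a half-turn of) the non-orientable generator: $e_r(\theta)$ traverses a half great circle on $\mathbb S^2$ from $(0,0,-1)$ to $(0,0,1)$, so $e_r\otimes e_r$ closes up into a projective line $\mathbb{RP}^1$ in $\mathcal U_\star\cong\mathbb{RP}^2$, which is the nontrivial class in $\pi_1(\mathcal U_\star)\cong\mathbb Z/2\mathbb Z$; any orientable lift $\gamma\otimes\gamma-\frac13 I$ would require $\gamma$ to return to itself after the director has rotated by $\pi$, which is impossible.

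The main obstacle is the middle step: showing that on the horizontal segments $\{z=\pm z_0\}$ — where we have \emph{no} explicit formula for $\widetilde Q_\xi$, only $f(\widetilde Q_\xi)\le\eta$ — the projected arc is contractible, i.e. carries no ``hidden'' half-integer winding that could cancel the inner-arc contribution. Here the hypothesis $z_0\le 1/2$ and $\rho_0\ge A_0$ is crucial: the segment at height $z_0$ is uniformly bounded away from the equatorial defect $\mathcal C$ (which sits at $z=0$, $\rho=1$), so by the interior $\varepsilon$-regularity already established ($f(\widetilde Q_\xi)\le\eta$ on a neighborhood, giving a good $C^{1,\alpha}$ bound via Lemma~\ref{l:firstbounds} and elliptic estimates), the director along each horizontal segment varies slowly on scale $\xi^\alpha$ and can be controlled by the energy bound: the homotopy class of a $\mathcal U_\star$-valued arc is detected by winding, and a winding of a half-turn over a bounded segment costs energy bounded below, which one quantifies using the lower bound machinery of Section~\ref{s:lower} (exactly the Jerrard/Sandier-type estimates the paper invokes). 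I would make this precise by a covering argument: partition the horizontal segment into subintervals on which $F(r;X)\le\gamma\rho(X)$ holds, apply Lemma~\ref{l:pointwiseboundpotential} to get $\mathcal U_\star$-closeness and hence a well-defined local lift, and chain these lifts together; the number of subintervals is controlled (Lemma~\ref{l:boundedbadballs}), so any winding would force a defect inside $R$ contradicting the energy accounting — or, more simply, one shows the homotopy class of $U_\xi|_{\partial R}$ is independent of $z_0\in(0,1/2]$ by a continuity/locally-constant argument and then lets $z_0\to 0$, where $\partial R$ degenerates onto the inner arc plus an explicit contractible remainder and the non-orientability becomes transparent.
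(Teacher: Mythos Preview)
Your decomposition idea is natural, but the treatment of the horizontal segments $\{z=\pm z_0\}$ has a genuine gap. You invoke the mirror symmetry $\widetilde Q_\xi(\rho,0)=S\widetilde Q_\xi(\rho,0)S^t$ to pin the director into the $(e_1,e_3)$ plane, but that constraint holds at $z=0$, not at $z=\pm z_0$; on the actual horizontal edges you have no such information. The energy-cost argument you sketch is also not well posed: an \emph{arc} (as opposed to a loop) in $\mathcal U_\star$ does not carry an intrinsic winding number, so ``extra half-integer winding on a segment'' is not a quantity you can bound by a lower-bound lemma. Finally, the $z_0\to 0$ limit cannot be taken: as $z_0\to 0$ the boundary $\partial R$ collapses onto the equatorial point $(1,0)$, precisely where the defect sits and where $f(\widetilde Q_\xi)\le\eta$ fails, so the projection is not defined along the limiting loop and the locally-constant argument breaks down.

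The paper bypasses the horizontal segments entirely by using the mirror symmetry as a constraint on a hypothetical \emph{lift}, not on the map itself. First one enlarges $R$ to $R_1=\{|z|<z_0,\rho<\rho_1\}\cap\widetilde\Omega$ with $\rho_1$ chosen (via the finite-energy/Hardy condition) so that $\widetilde Q_\xi(\rho_1,\cdot)$ is uniformly within a prescribed $\delta$ of $Q_\infty$; since $f(\widetilde Q_\xi)\le\eta$ on $R_1\setminus R$, the loops on $\partial R$ and $\partial R_1$ are homotopic. Now argue by contradiction: if $V_\xi=\pi(\widetilde Q_\xi)|_{\partial R_1}$ is orientable, pick a continuous lift $n$ with $n(1,0)=e_1$. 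The mirror symmetry \eqref{eq:mirrorsym} forces $n(\rho,-z)=\tau S n(\rho,z)$ for some global sign $\tau$; evaluating at $(1,0)$ gives $\tau=+1$. But then at the other intersection of $\partial R_1$ with $\{z=0\}$, namely $(\rho_1,0)$, one gets $n(\rho_1,0)=Sn(\rho_1,0)$, i.e.\ $n(\rho_1,0)\perp e_3$, hence $|V_\xi(\rho_1,0)-Q_\infty|^2=2$, contradicting the $\delta$-closeness. This argument never looks at the horizontal edges at all: the symmetry pairs the top and bottom halves of the loop and reduces everything to the two points on $\{z=0\}$, where the boundary data and the far-field give explicit, incompatible constraints.
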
 
\begin{proof}[Proof of Lemma~\ref{l:nontrivialrectangle}]
Recall that $Q_\xi\in \mathcal H_{sym}$, hence
\begin{equation*}
\int_{\widetilde\Omega}\left(\abs{\nabla\widetilde Q_\xi}^2 + \frac{\abs{\widetilde Q_\xi-Q_\infty}^2}{\rho^2+z^2}\right) \rho d\rho dz <\infty.
\end{equation*}
In particular, for any $\xi\in (0,\xi_0)$ and any $\delta>0$ we may choose $\rho_1\geq \rho_0$ such that
\begin{equation*}
\int_{-1}^1 \abs{\partial_z \widetilde Q_\xi(\rho_1,z)}^2 dz + \int_{-1}^1\abs{\widetilde Q_\xi(\rho_1,z)-Q_\infty}^2 dz <\delta,
\end{equation*}
which implies
\begin{equation}\label{eq:Qrho1}
\abs{\widetilde Q_\xi(\rho_1,z)-Q_\infty}^2\lesssim\delta\quad\forall z\in [-1,1].
\end{equation}
We denote by $R_1$ the domain
\begin{equation*}
R_1=\widetilde\Omega\cap\lbrace \abs{z}<z_0,\, \rho <\rho_1\rbrace,
\end{equation*}
and define $V_\xi=\pi(\widetilde Q_\xi)\colon\partial R_1\to\mathcal U_\star$. Since $\pi(\widetilde Q_\xi)$ is well defined and has finite energy in $R_1\setminus R_0$, the maps $U_\xi$ and $V_\xi$ are homotopically equivalent.  To prove Lemma~\ref{l:nontrivialrectangle} it thus suffices to prove that $V_\xi$ is non orientable. Assume that $V_\xi$ is orientable: there exists a continuous $n\colon \partial R_1\to\mathbb S^2$ such that
\begin{equation*}
V_\xi = n\otimes n -\frac 13 I.
\end{equation*}
Since $n$ is uniquely defined up to a sign and $V_\xi=e_1\otimes e_1-\frac 13 I$ at $(\rho,z)=(1,0)$, we may assume that $n(1,0)=e_1$.
The symmetry assumption \eqref{eq:mirrorsym} implies that
\begin{equation*}
n(\rho,-z)=\tau S n(\rho,z)\qquad\text{for some }\tau\in\lbrace \pm 1\rbrace.
\end{equation*}
Evaluating this at $(\rho,z)=(1,0)$ gives $e_1=\tau Se_1 =\tau e_1$, hence $\tau=1$. This implies that at $(\rho,z)=(\rho_1,0)$ one must have $n(\rho_1,0)=Sn(\rho_1,0)$, i.e. $n(\rho_1,0)\perp e_3$, and therefore $\abs{V_\xi(\rho_1,0)-Q_\infty}^2= 2$. For small enough $\delta$ this contradicts \eqref{eq:Qrho1}.
\end{proof}

The next lemmas deal with universal lower bounds in annular regions.

\begin{lemma}\label{l:lowerboundannulus}
There exists $C>0$ such that for any $\xi>0$, for any annulus $\omega\subset\R^2$ of the form
\begin{equation*}
\omega = B(0,R)\setminus B(0,r),\qquad 0<r<R\leq \frac 12,
\end{equation*}
and any $H^1$ map $Q\colon\overline \omega\to\mathcal S_0$ satisfying $f(Q)\leq \eta$ in $\omega$ and such that $U=\pi(Q)\colon\overline\omega\to\mathcal U_\star$ is continuous and nonorientable on $\partial B(0,R)$, we have
\begin{equation*}
\int_\omega \left(\abs{\nabla Q}^2+\frac{1}{\xi^2}f(Q)\right) \geq \pi \ln \frac{R}{r} - C \xi\left(\frac 1 r-\frac 1 R\right).
\end{equation*}
\end{lemma}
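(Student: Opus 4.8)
The plan is to reduce the vectorial estimate to a scalar one via the degree/energy relation on circles, exactly as in the classical Ginzburg--Landau lower bounds but keeping careful track of the potential term. First I would slice the annulus by circles $\partial B(0,t)$ for $r<t<R$. For a.e. such $t$ the restriction $Q|_{\partial B(0,t)}$ is $H^1$ and satisfies $f(Q)\le\eta$, so $U_t:=\pi(Q|_{\partial B(0,t)})$ is well-defined, continuous, and (by continuity of the projection and connectedness of the family of circles, together with finiteness of the energy in the annulus) homotopic to $U|_{\partial B(0,R)}$ — hence nonorientable. The key local fact is then: if $Q\colon\mathbb S^1_t\to\mathcal S_0$ with $f(Q)\le\eta$ has nonorientable projection, then $\int_{\partial B(0,t)}\abs{\partial_\tau Q}^2\,ds\ge \frac{\pi}{t} - C\,\xi^{?}\,(\dots)$; more precisely I expect the clean statement $\int_{\partial B(0,t)}\abs{\partial_\tau Q}^2\,ds + \frac{1}{\xi^2}\int_{\partial B(0,t)}f(Q)\,ds \ge \frac{\pi}{t} - \frac{C\xi}{t^2}$, obtained by writing $Q=\pi(Q)+(Q-\pi(Q))$, noting $\abs{Q-\pi(Q)}^2\lesssim f(Q)$, and using that a nonorientable loop into $\mathcal U_\star$ has Dirichlet energy $\ge \pi/t$ (this is the $\pi_1(\mathcal U_\star)\approx\mathbb Z/2$ analogue of the degree-$\pm 1$ bound: the minimal such energy is realized by the ``half-winding'' $n(\theta)=(\cos(\theta/2),\sin(\theta/2),0)$ giving $\int\abs{n'}^2=\pi/4$ per unit, wait — recompute — a nonorientable loop lifts to a path from $v$ to $-v$ on $\mathbb S^2$, the minimal energy of such a path over angle $2\pi$ is $\pi/ (2t)\cdot$... ; the correct normalization producing exactly $\pi/t$ comes from $\int_{\mathbb S^1_t}\abs{\partial_\tau(n\otimes n)}^2 = 2\int\abs{\partial_\tau n}^2$ combined with the half-winding, and one should just cite the computation in \cite{canevari2d,golovatymontero14} or \cite{bbh} adapted to $\mathcal U_\star$).

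Second, I would integrate the sliced estimate in $t$. Using $\abs{\nabla Q}^2 \ge \abs{\partial_\tau Q}^2$ pointwise (dropping the radial derivative) and Fubini,
\[
\int_\omega\!\left(\abs{\nabla Q}^2+\frac{1}{\xi^2}f(Q)\right)
\ge \int_r^R\!\left(\int_{\partial B(0,t)}\!\abs{\partial_\tau Q}^2 + \frac{1}{\xi^2}\int_{\partial B(0,t)}\!f(Q)\right)dt
\ge \int_r^R\!\left(\frac{\pi}{t} - \frac{C\xi}{t^2}\right)dt,
\]
and $\int_r^R \frac{\pi}{t}\,dt = \pi\ln\frac{R}{t}$ wait $\pi\ln\frac Rr$, while $\int_r^R \frac{C\xi}{t^2}\,dt = C\xi\left(\frac1r-\frac1R\right)$, which is precisely the claimed bound. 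The only subtlety in this step is making sure the per-slice error is genuinely $O(\xi/t^2)$ and not $O(\xi/t)$ or worse; this is where one must be slightly careful, because $Q-\pi(Q)$ and $\partial_\tau(Q-\pi(Q))$ are controlled in $L^2$ on the circle with constants that could a priori carry a factor of $t$. I would handle this by using the pointwise bound $\abs{Q-\pi(Q)}\lesssim \sqrt{f(Q)}\lesssim \sqrt\eta$ together with Cauchy--Schwarz in the form $\bigl(\int_{\partial B(0,t)}\abs{\partial_\tau Q}^2\bigr)^{1/2}\ge \bigl(\int\abs{\partial_\tau U_t}^2\bigr)^{1/2} - \bigl(\int\abs{\partial_\tau(Q-U_t)}^2\bigr)^{1/2}$ and Young's inequality, absorbing the cross term against the potential term $\frac1{\xi^2}\int f(Q)$; the circumference factor $2\pi t\le\pi$ (since $R\le 1/2$) keeps the constant universal and produces the $1/t^2$ weight upon optimizing the split.

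The main obstacle I anticipate is the rigorous justification that $U_t$ is nonorientable for (a.e.) \emph{every} $t\in(r,R)$, not just for $t$ close to $R$. Orientability is a homotopy-invariant property, so it suffices that $t\mapsto U_t$ be, in an appropriate sense, continuous as a map into $C^0(\mathbb S^1;\mathcal U_\star)$ up to homotopy; since $\pi(Q)\in H^1(\omega;\mathcal U_\star)$ with $f(Q)\le\eta$ throughout, the trace of $\pi(Q)$ on $\partial B(0,t)$ varies continuously in $H^{1/2}$ hence (in one dimension) in $C^0$ for a.e. $t$, and on the exceptional null set one argues by approximation. An alternative, cleaner route avoiding slice-by-slice homotopy bookkeeping is the ``ball growth''/vortex-ball construction of \cite{jerrard99,sandier98} adapted to $\mathcal U_\star$-valued maps as in \cite{canevari2d}: one shows directly that an annulus on whose outer boundary the projected map is nonorientable must carry Dirichlet energy at least $\pi\ln(R/r)$ minus lower-order terms, with the potential controlling the deviation of $Q$ from $\mathcal U_\star$. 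I would present the slicing argument as the main proof and remark that it is the two-dimensional, weightless prototype of the more refined estimates (Lemma~\ref{l:finelowerboundhalfannulus}) needed later; the error term $C\xi(1/r-1/R)$ is exactly what one loses by working with $Q$ rather than with its projection $\pi(Q)$, and it is harmless because in all applications $r\gtrsim\xi$, making this term $O(1)$.
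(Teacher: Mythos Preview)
Your overall architecture --- slice by circles, propagate nonorientability to every slice, establish a per-slice lower bound of the form $\pi/t - C\xi/t^2$, then integrate in $t$ --- is exactly the paper's.

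The gap is in your mechanism for the per-slice bound. You propose the reverse triangle inequality
\[
\Bigl(\int_{\partial B(0,t)}\abs{\partial_\tau Q}^2\Bigr)^{1/2}\ge \Bigl(\int_{\partial B(0,t)}\abs{\partial_\tau U}^2\Bigr)^{1/2}-\Bigl(\int_{\partial B(0,t)}\abs{\partial_\tau(Q-U)}^2\Bigr)^{1/2},
\]
then squaring and using Young to absorb the cross term against the potential. But after Young you must control $\int_{\partial B(0,t)}\abs{\partial_\tau(Q-U)}^2$, and $f(Q)$ only controls $\abs{Q-U}^2$ pointwise, \emph{not} its tangential derivative. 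There is no a priori bound on $\Vert\partial_\tau(Q-U)\Vert_{L^2}$ in terms of $\xi^{-2}\int f(Q)$; this quantity can be as large as $\Vert\partial_\tau Q\Vert_{L^2}$ itself, and the argument collapses.

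The paper's route (following \cite{canevari2d,golovatymontero14}, which you cite but do not actually invoke) is a \emph{pointwise} decomposition exploiting the geometry of the nearest-point projection. With $D=\abs{Q-U}=\dist(Q,\mathcal U_\star)$ and $P(U)$ the orthogonal projection onto $(T_U\mathcal U_\star)^\perp$, one has $Q-U=P(U)(Q-U)$; expanding $\abs{\partial_\tau Q}^2$ and using that $\partial_\tau U\in T_U\mathcal U_\star$ (so $P(U)\partial_\tau U=0$), the cross term reduces to $2\,\partial_\tau U\cdot\partial_\tau[P(U)]\,(Q-U)$, which is bounded by $cD\abs{\partial_\tau U}^2$. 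This gives
\[
\abs{\partial_\tau Q}^2 \ge (1-cD)\,\abs{\partial_\tau U}^2 + \abs{\partial_\tau D}^2,
\]
so the loss in passing from $Q$ to $U$ is the \emph{multiplicative} factor $(1-cD)$, controlled pointwise by $D\lesssim\sqrt{f(Q)}$. One then uses $\int_{\partial B(0,t)}\abs{\partial_\tau U}^2\ge\pi/t$ for nonorientable loops, combines $\abs{\partial_\tau D}^2+\alpha^2\xi^{-2}D^2$ in Modica--Mortola fashion, and minimizes over $d(t)=\max_{\partial B(0,t)}D$ to obtain $\pi/t - C\xi/t^2$. The key geometric fact your triangle-inequality approach misses is that the cross term is small of order $D$, not of order $\Vert\partial_\tau(Q-U)\Vert$.
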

\begin{proof}[Proof of Lemma~\ref{l:lowerboundannulus}]
This is proved in \cite{canevari2d} and \cite{golovatymontero14}, but for completeness we include here a proof, following the method in \cite{jerrard99}.

Let $D=\abs{Q-U}=\dist(Q,\mathcal U_*)$, and denote by $P\colon\mathcal U_\star\to \mathcal L(\mathcal S_0)$ the smooth map given by $P(u)=P_{(T_u\mathcal U_\star)^\perp}$  the orthogonal projection onto the normal space to $\mathcal U_\star$ at $u$. Note that by definition $Q-U=P(U)(Q-U)$. Then for any direction $k$ we compute
\begin{align*}
\abs{\partial_k Q}^2 &=\abs{\partial_k U}^2 +\abs{\partial_k (Q-U)}^2 + 2\partial_k U\cdot \partial_k (Q-U)\\
& \geq \abs{\partial_k U}^2 +\abs{\partial_k D}^2 + 2\partial_k U\cdot \partial_k \left[ P(U)(Q-U)\right]\\
&= \abs{\partial_k U}^2 +\abs{\partial_k D}^2 + 2\partial_k U\cdot \partial_k \left[ P(U) \right] (Q-U) + 2\partial_k U\cdot P(U) \partial_k(Q_U).
\end{align*}
The last term is zero because $\partial_k U\in T_U\mathcal U_\star$ and therefore $P(U)\partial_k U=0$. So we have
\begin{align*}
\abs{\partial_k Q}^2 
& \geq \abs{\partial_k U}^2 +\abs{\partial_k D}^2 - 2\norm{DP(U)}D \abs{\partial_k U}^2 \\
&\geq  (1-c D)\abs{\partial_k U}^2 +\abs{\partial_k D}^2,
\end{align*}
for $c=2\sup_{\mathcal U_\star}\norm{DP(U)}$. This computation is very similar to \cite[Lemma~2.6]{canevari2d} and \cite[Lemma~4]{golovatymontero14}. Recalling now that $f(Q)\geq \alpha^2 D^2$ for some $\alpha>0$ we find
\begin{align*}
\abs{\partial_k Q}^2+\frac{1}{\xi^2}f(Q) &\geq (1-c D)\abs{\partial_k U}^2 + \abs{\partial_k D}^2 +\frac{\alpha^2}{\xi^2}D^2 \\
& \geq  (1-cD)\abs{\partial_k U}^2 + \frac{\alpha}{\xi}\abs{\partial_k [D^2]}.
\end{align*}
Hence for any $s\in (0,R)$ we have, letting $d(s)=\max_{\partial B(0,s)} D$,
\begin{align*}
\int_{\partial B(0,s)}\left(\abs{\nabla Q}^2 +\frac{1}{\xi^2}f(Q) \right) &
\geq (1-c d(s))\int_{\partial B(0,s)}\abs{\partial_\tau U}^2
 + \frac{\alpha}{\xi}d(s)^2
\end{align*}
Since $U$ is $H^1$ in $\omega$ and non orientable on $\partial B(0,R)$, it is continuous and nonorientable on $\partial B(0,s)$ for a.e. $s\in [r,R]$, and for such $s$ we have (see e.g. \cite[Corollary~3.8]{canevari2d})
\begin{equation*}
\int_{\partial B(0,s)}\abs{\partial_\tau U}^2 \geq \frac{\pi}{s},
\end{equation*}
and therefore
\begin{align*}
\int_{\partial B(0,s)}\left(\abs{\nabla Q}^2 +\frac{1}{\xi^2}f(Q) \right) &
\geq \frac{\pi}{s}(1-c d(s))  + \frac{\alpha}{\xi}d(s)^2 \\
&\geq \inf_{d\in [0,1]}\left( \frac{\pi}{s}(1-c d)  + \frac{\alpha}{\xi}d^2\right)\\
&=\frac\pi s -\frac{\xi}{s^2}\cdot \begin{cases}
\frac{c^2\pi^2}{4\alpha} &\quad\text{if }\frac{\xi}{s}\leq \frac{c\pi}{2\alpha},\\
\frac{s}{\xi}\left(c\pi -\alpha\frac{s}{\xi}\right) &\quad\text{if }\frac\xi s\geq \frac{c\pi}{2\alpha}
\end{cases}\\
&\geq \frac \pi s -\frac{c^2\pi^2}{4\alpha}\frac{\xi}{s^2}.
\end{align*}
Integrating we deduce
\begin{align*}
\int_\omega \left(\abs{\nabla Q}^2 +\frac{1}{\xi^2}f(Q) \right) &
\geq \pi\ln\frac Rr -\frac{c^2\pi^2}{4\alpha}\xi\left(\frac 1r-\frac 1R\right).
\end{align*}
\end{proof}

Using Lemma~\ref{l:lowerboundannulus} and the growing ball construction of Jerrard or Sandier \cite{jerrard99,sandier98} (which is adapted to our setting in \cite[Lemma~7]{golovatymontero14}), one obtains the following lower bound on perforated domains:
\begin{lemma}\label{l:lowerboundperforated}
There exists $C>0$ such that for any $\xi\in (0,1]$, for any perforated domain $\omega\subset\R^2$ of the form
\begin{equation*}
\omega = B(0,R)\setminus \bigcup_{j=1}^N B(x_j,r),\qquad B(x_j,r)\subset B(0,\frac R2)\text{ disjoint}, \;  r\geq \xi,
\end{equation*}
and any $H^1$ map $Q\colon\overline \omega\to\mathcal S_0$ satisfying $f(Q)\leq \eta$ in $\omega$ and such that $U=\pi(Q)\colon\overline\omega\to\mathcal U_\star$ is continuous and nonorientable on $\partial B(0,R)$, we have
\begin{equation*}
\int_\omega \left(\abs{\nabla Q}^2+\frac{1}{\xi^2}f(Q)\right) \geq \pi \ln \frac{R}{Nr} - C.
\end{equation*}
\end{lemma}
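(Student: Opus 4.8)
The plan is to run the vortex-ball growth construction of Jerrard and Sandier \cite{jerrard99,sandier98}, in the $\mathcal S_0$-valued, $\mathbb Z/2$-class form worked out in \cite[Lemma~7]{golovatymontero14}, using Lemma~\ref{l:lowerboundannulus} as the elementary annular estimate. Since $f(Q)\leq\eta$ on all of $\omega$, the nearest-point projection $U=\pi(Q)$ is continuous on $\overline\omega$, so for any closed ball $\overline{B(y,R')}\subset\overline\omega$ whose boundary circle lies in $\omega$ the homotopy class $[U|_{\partial B(y,R')}]\in\pi_1(\mathcal U_\star)\cong\mathbb Z/2$ is well defined; when it is the nontrivial class, Lemma~\ref{l:lowerboundannulus} applied on $B(y,R')\setminus B(y,r')$ (for $\xi\leq r'<R'$, after the harmless translation and rescaling to reduce to outer radius $\leq\frac12$) gives $\int_{B(y,R')\setminus B(y,r')}(\abs{\nabla Q}^2+\xi^{-2}f(Q))\geq\pi\ln(R'/r')-C$, with error $O(1)$ precisely because $r'\geq\xi$.

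Next I would set up the ball-growth process: starting at scale $r$ from $\{\overline{B(x_j,r)}\}_{j=1}^N$, grow all balls at a common rate and merge any two that come into contact into a single ball containing both, producing at each ``time'' a finite family $\mathcal B(t)$ of pairwise disjoint closed balls covering $\bigcup_j\overline{B(x_j,r)}$, and stopping before any ball can leave $B(0,R)$ (there is room since the holes start in $B(0,R/2)$). To each $B\in\mathcal B(t)$ attach its class $d_B=[U|_{\partial B}]\in\mathbb Z/2$. The decisive bookkeeping is homotopy invariance: because $U$ is continuous on all of $\overline\omega$, the punctured domain $B(0,R)\setminus\bigcup_{B\in\mathcal B(t)}B$ forces $\sum_{B\in\mathcal B(t)}d_B\equiv[U|_{\partial B(0,R)}]=1\pmod 2$ for every $t$, so at least one ball in $\mathcal B(t)$ is always nonorientable. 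Unlike the integer-degree Ginzburg--Landau situation there is no monotone ``total multiplicity'' --- two nonorientable balls merge into an orientable one --- but the parity of the total is conserved, which is exactly what keeps a nonorientable ball alive at every scale; this is all the topology that is used. The energy lower bound is then assembled by collecting the contribution $\pi\,d(\ln\rho)$ of a nonorientable ball in each annular shell via Lemma~\ref{l:lowerboundannulus}: telescoping along the growing family from the initial total radius $Nr$ up to scale $\sim R$ gives $\int_\omega(\abs{\nabla Q}^2+\xi^{-2}f(Q))\geq\pi\ln(R/(Nr))-C$. The statement is vacuous when $Nr\geq R$.

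The main obstacle, and the reason the detailed Sandier/Jerrard machinery is needed rather than a one-line telescoping, is to keep the error constant $C$ universal, i.e. independent of $N$ and of $R/r$: a naive shell-by-shell summation of the error term of Lemma~\ref{l:lowerboundannulus} over the $N$ holes produces an $O(N)$ loss, and the same happens if one telescopes along an arbitrary merge lineage. The cure is to merge rapidly through the small scales --- where the potential error would accumulate --- and to estimate the energy by a differential inequality along the growing family (as in \cite{sandier98}, adapted in \cite[Lemma~7]{golovatymontero14}), so that the main logarithmic gain is harvested from annuli whose inner radius is $\gtrsim Nr\geq N\xi$, on which the error of Lemma~\ref{l:lowerboundannulus} is $\lesssim\xi/(Nr)\leq 1/N\leq 1$. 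Assembling these ingredients yields the claimed bound with a universal constant.
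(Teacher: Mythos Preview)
Your proposal is correct and matches the paper's approach exactly: the paper does not give a detailed proof but simply invokes the Jerrard--Sandier ball-growth construction \cite{jerrard99,sandier98}, as adapted to the $\mathcal U_\star$-valued setting in \cite[Lemma~7]{golovatymontero14}, with Lemma~\ref{l:lowerboundannulus} as the annular input. Your sketch supplies the details (parity of the $\mathbb Z/2$ class under merging, telescoping from total radius $Nr$ to $\sim R$, uniform control of the $O(1)$ error) that the paper leaves to those references.
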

%\begin{proof}[Proof of Lemma~\ref{l:lowerboundperforated}]
%\end{proof}

We will also need a boundary version of Lemma~\ref{l:lowerboundannulus}:
\begin{lemma}\label{l:lowerboundhalfannulus}
There exists $C>0$ such that for any $\xi>0$, for any annulus $\omega\subset\R^2$ of the form
\begin{equation*}
\omega = B(0,R)\setminus B(0,r),\qquad 0<r<R\leq \frac 12,
\end{equation*}
and any $H^1$ map $Q\colon\overline \omega\to\mathcal S_0$ satisfying $f(Q)\leq \eta$ in $\omega$ and such that $U=\pi(Q)\colon\overline\omega\to\mathcal U_\star$ is continuous and nonorientable on $\partial B(0,R)$, if in addition $Q=U_0$ on the left half $\omega_- = \omega\cap \lbrace (x_1,x_2)\colon x_1<0\rbrace$ of the annulus, for some Lipschitz $U_0\colon \R^2\to \mathcal U_\star$ with $\abs{\nabla U_0}\leq 1$, then for  any function $w$ such that $w(x)\geq 1-\abs{x}^2$,
\begin{equation*}
\int_{\omega_+} \left(\abs{\nabla Q}^2+\frac{1}{\xi^2}f(Q)\right)w(x)\, dx \geq 2\pi \ln \frac{R}{r} - C \left(1+ \frac{\xi}{r}\right),
\end{equation*}
where $\omega_+ =\omega\cap \lbrace (x_1,x_2)\colon x_1>0\rbrace$ is the right half of the annulus.
\end{lemma}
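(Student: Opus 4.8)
The plan is to reduce the boundary half-annulus estimate to the full-annulus estimate of Lemma~\ref{l:lowerboundannulus} by a reflection argument, exploiting the hypothesis that $Q=U_0$ on the left half $\omega_-$. First I would observe that since $U_0$ is Lipschitz with $\abs{\nabla U_0}\le 1$, the energy density of $Q$ on $\omega_-$ is bounded: $\abs{\nabla Q}^2 = \abs{\nabla U_0}^2 \le 2$ there (the factor accounting for $\mathcal S_0$-components), and $f(Q)=f(U_0)=0$ since $U_0$ takes values in $\mathcal U_\star$. Consequently the contribution of $\omega_-$ to the weighted energy is at most $C\int_{\omega_-} w \le C\int_{B(0,R)} w \le C$, a universal constant (using $w(x)\ge 1-\abs x^2$ only on the right; on the left we just need $w$ integrable, which follows from the explicit form $w=\rho$ in the application, but to keep the statement self-contained one may simply restrict to the case at hand where $w$ is bounded on $B(0,1)$). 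The point is that $\omega_-$ carries $O(1)$ energy, so up to a universal constant we may pretend the energy lives on $\omega_+$ alone.

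Next I would build a competitor on the full annulus $\omega$ by keeping $Q$ on $\omega_+$ and \emph{replacing} $Q$ on $\omega_-$ by the reflection $\widehat Q(x_1,x_2) = Q(-x_1,x_2)$, which is well-defined since $Q$ restricted to the diameter $\{x_1=0\}$ matches under reflection only if it is even there — which it need not be. So instead the cleaner route is: define on the full annulus the map $\widehat Q$ equal to $Q$ on $\omega_+$ and equal to the mirror image of $Q\lfloor\omega_+$ on $\omega_-$, i.e. $\widehat Q(x) = \sigma\big(Q(\sigma x)\big)$ for $x\in\omega_-$, where $\sigma$ is the reflection $(x_1,x_2)\mapsto(-x_1,x_2)$ acting appropriately on $\mathcal S_0$ so as to preserve $\mathcal U_\star$ and the Dirichlet-type gluing. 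One checks this is $H^1$ across $\{x_1=0\}$ because both pieces have the same trace there, that $f(\widehat Q)\le\eta$ is preserved, and—crucially—that the projected boundary map $\pi(\widehat Q)$ on $\partial B(0,R)$ remains nonorientable: the left half of $\partial B(0,R)$ now carries the mirror of the right half, and since the original $U=\pi(Q)$ was nonorientable on the full circle while being given by the fixed orientable datum $U_0$ on the left semicircle, the "winding defect" is entirely concentrated on the right semicircle, so doubling it by reflection produces a loop whose $\pi_1(\mathcal U_\star)$ class is the square of a nontrivial class — but $\pi_1(\mathcal U_\star)\cong\mathbb Z/2$, so the square is trivial! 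This is the main obstacle: a naive reflection kills the topological obstruction.

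To circumvent this I would instead use the one-dimensional slicing estimate directly, as in the proof of Lemma~\ref{l:lowerboundannulus}, but keep track of the full circle $\partial B(0,s)$ rather than throwing away $\omega_-$. The key is that on $\partial B(0,s)$, the map $U=\pi(Q)$ is continuous and nonorientable, and it \emph{coincides with the fixed Lipschitz map $U_0$ on the left semicircle}, where $\abs{\partial_\tau U_0}\le 1$. Therefore, writing $\int_{\partial B(0,s)}\abs{\partial_\tau U}^2 = \int_{\text{left}} + \int_{\text{right}}$, the left integral is at most $\pi s$ (arclength times $1$), and since the total must be at least $\pi/s$ — no wait, that's too weak. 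The correct observation: because $U$ is nonorientable on the circle and equals the \emph{orientable} map $U_0$ on the closed left semicircle, the nonorientability must be "realized" by the right semicircle arc together with the two endpoints; parametrizing, the lift $n$ of $U$ must flip sign as one traverses the right semicircle, forcing $\int_{\text{right semicircle}}\abs{\partial_\tau U}^2 \ge \pi^2/(\pi s) = \pi/s$ by Cauchy–Schwarz on an arc of length $\pi s$ across which the director rotates by an odd multiple of $\pi$ (here I use that the endpoints of the right semicircle are fixed modulo the $U_0$ data, so the $\mathcal U_\star$-valued path on the right has half-integer "winding" relative to a fixed frame). Combined with $f(Q)\ge\alpha^2 D^2$ and the pointwise inequality $\abs{\partial_k Q}^2 \ge (1-cD)\abs{\partial_k U}^2 + \abs{\partial_k D}^2$ from the proof of Lemma~\ref{l:lowerboundannulus}, and using $w\ge 1$ on the right half after noting $1-\abs x^2\ge 0$ is too weak near $\partial B(0,1)$ — so here I genuinely need $w(x)\ge 1-\abs x^2$ and the fact that we integrate only over $\omega_+\subset B(0,1/2)$ where $1-\abs x^2 \ge 3/4$, giving $w\ge 3/4$, hence a clean constant.

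Putting it together: for a.e.\ $s\in(r,R)$,
\begin{equation*}
\int_{\omega_+\cap\partial B(0,s)}\!\!\Big(\abs{\nabla Q}^2+\tfrac{1}{\xi^2}f(Q)\Big)w\,ds_s \ \ge\ \tfrac34\Big((1-cd(s))\tfrac{2\pi}{s} + \tfrac{\alpha}{\xi}d(s)^2\Big),
\end{equation*}
where the factor $2\pi/s$ (rather than $\pi/s$) comes from the right-semicircle estimate $\int_{\text{right}}\abs{\partial_\tau U}^2\ge 2\pi/s$ — this is the point where the hypothesis "$Q=U_0$ on $\omega_-$" upgrades the constant, since the odd winding is compressed into a half-circle so the energy density doubles. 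Then I minimize the bracket over $d\in[0,1]$ exactly as in Lemma~\ref{l:lowerboundannulus}, obtaining a lower bound $\tfrac34\big(\tfrac{2\pi}{s} - C\tfrac{\xi}{s^2}\big)$, and integrate $ds$ from $r$ to $R$ to get $\tfrac{3\pi}{2}\ln\tfrac Rr - C(\xi/r)$. The remaining gap between $\tfrac{3\pi}{2}$ and the claimed $2\pi$, together with the additive $-C$, I would recover by a more careful slicing that also uses the radial derivative energy $\abs{\partial_s Q}^2$ on $\omega_+$ (the Jerrard-type ball-growth/lower-bound machinery, or integrating the Pohozaev-type identity), plus the observation that near $s$ close to $R\le 1/2$ one actually has the better weight; alternatively one absorbs the discrepancy into the constant $C$ if the sharp constant $2\pi$ is not needed — but since the statement asserts $2\pi$, the honest argument must track the full gradient, not just the tangential part, on the right half. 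I would therefore present the slicing with the full density $\abs{\nabla Q}^2 = \abs{\partial_s Q}^2 + s^{-2}\abs{\partial_\tau Q}^2$ and note that discarding $\abs{\partial_s Q}^2$ loses nothing in the leading order because the tangential nonorientability estimate on a semicircle already yields the factor $2\pi$ when combined with the weight lower bound $w\ge 1$ — wait: on a semicircle of radius $s$, length $\pi s$, a director rotating by $\pi$ gives $\int\abs{\partial_\tau U}^2 \ge (\pi)^2/(\pi s) = \pi/s$, not $2\pi/s$. The doubling instead comes from the \emph{weight}: since we may reflect the energy-minimizing competitor, the sharp degree-$1/2$ cost on the half-annulus with weight $w\ge 1$ equals $2\pi\ln(R/r)$ by comparison with the full annulus problem whose minimal energy is $\pi\ln(R/r)$ but which "sees" the half-annulus twice. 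I will make this precise by the reflection competitor after all, handling the topological subtlety by noting that the reflected loop, although contractible in $\mathcal U_\star$, still has its energy bounded below by $2\times$ the half-annulus energy via the orientable lower bound (a rotating director of degree $\pm1$), which is exactly $2\pi\ln(R/r)$; the nonorientability of the original is used only to certify that the right-half contribution is genuinely present and not cancellable by a globally orientable competitor. The main obstacle, then, is this bookkeeping of the topological class under reflection and the correct attribution of the factor $2$ — whether it arises from the weight, from doubling the domain, or from the semicircle Poincaré constant — and making sure exactly one factor of $2$ appears, not zero and not two.
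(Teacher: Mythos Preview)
Your proposal wanders through several approaches without landing on a proof, and the key computation is missing. The reflection idea is a dead end, as you correctly diagnose: doubling a nonorientable half-loop in $\pi_1(\mathcal U_\star)\cong\mathbb Z/2$ yields the trivial class, so the reflected map carries no topological obstruction and the full-annulus lemma gives nothing. Drop it.

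The slicing approach is the right one, but you never pin down the factor $2\pi$, and your back-and-forth (``$2\pi/s$\ldots no, $\pi/s$\ldots maybe the weight doubles it\ldots'') is the gap. Here is what you are missing. On the right semicircle $\partial B(0,s)\cap\omega_+$, lift $U$ to a director $n_s\colon[-\tfrac\pi2,\tfrac\pi2]\to\mathbb S^2$, and lift $U_0$ globally to $n_0\colon\R^2\to\mathbb S^2$. At the endpoints $\theta=\pm\tfrac\pi2$ you have $n_s=\pm n_0$, and nonorientability of $U$ on the full circle forces the two signs to \emph{differ}: after fixing $n_s(-\tfrac\pi2)=n_0(se^{-i\pi/2})$ you get $n_s(\tfrac\pi2)=-n_0(se^{i\pi/2})$. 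Now the Lipschitz hypothesis $\abs{\nabla U_0}\le 1$ gives $\abs{\nabla n_0}\le 1$, hence $\abs{n_0(se^{-i\pi/2})-n_0(se^{i\pi/2})}\le 2s$, so the two endpoints of $n_s$ are \emph{nearly antipodal}: $\dist_{\mathbb S^2}\bigl(n_s(-\tfrac\pi2),n_s(\tfrac\pi2)\bigr)\ge \pi-4s$. Cauchy--Schwarz on the arc of parameter-length $\pi$ then gives $\int_{-\pi/2}^{\pi/2}\abs{n_s'}^2\,d\theta\ge\tfrac1\pi(\pi-4s)^2$. The factor you kept losing is the purely algebraic identity $\abs{\partial_\tau U}^2=2\abs{\partial_\tau n}^2$ for $U=n\otimes n-\tfrac13 I$; with it,
\[
\int_{\partial B(0,s)\cap\omega_+}\abs{\partial_\tau U}^2
=\frac{2}{s}\int_{-\pi/2}^{\pi/2}\abs{n_s'}^2\,d\theta
\ge \frac{2}{\pi s}(\pi-4s)^2=\frac{2\pi}{s}-16+O(s).
\]
This is exactly the $2\pi/s$ you need; no doubling of domain, no weight trick, no radial derivative. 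From here the proof is identical to that of Lemma~\ref{l:lowerboundannulus}: combine with $\abs{\partial_\tau Q}^2\ge(1-cD)\abs{\partial_\tau U}^2+\abs{\partial_\tau D}^2$ and $f(Q)\ge\alpha^2 D^2$, minimize over $d(s)=\max D$, multiply by the weight bound $w\ge 1-s^2$, and integrate $ds$ from $r$ to $R$. The $-16$ and the $(1-s^2)$ factor contribute only $O(1)$ since $R\le\tfrac12$.

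The weight does \emph{not} supply the factor $2$, and the bound $w\ge\tfrac34$ you wrote is correct but irrelevant to the constant; the paper uses $w\ge 1-s^2$ slice by slice.
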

\begin{proof}[Proof of Lemma~\ref{l:lowerboundhalfannulus}]
We claim that 
\begin{align}\label{eq:halfgeod}
\int_{\partial B(0,s)\cap\omega_+}\abs{\partial_\tau U}^2\geq \frac{2\pi}{s}-16\qquad\text{for a.e. }s\in [r,R].
\end{align}
Then, going through the computations in the proof of Lemma~\ref{l:lowerboundannulus} gives
\begin{align*}
\int_{\omega_+}\left(\abs{\nabla Q}^2 +\frac{1}{\xi^2}f(Q) \right)w(x)\,dx &
\geq \int_{r}^{R}\left(\frac{2\pi}{s} -\frac{c^2\pi^2}{\alpha}\frac{\xi}{s^2}-16\right)(1-s^2)\, s\, ds
\\
&\geq 2\pi\ln\frac Rr -C\left(\frac{c^2\pi^2}{\alpha}\frac\xi r +1\right),
\end{align*}
which proves Lemma~\ref{l:lowerboundhalfannulus}, provided we show \eqref{eq:halfgeod}. We fix $s\in [r,R]$ such that $U_{\lfloor\partial B(0,s)}$ is in $H^1$ and non-orientable. Since $\R^2$ and $\partial B(0,s)\cap\overline\omega_+$ are simply connected we may orient $U_0$ and $U$ on these domains, i.e. write 
\begin{align*}
U_0&=n_0\otimes n_0-\frac 13 I, & n&\colon\R^2\to\mathbb S^2,\\
U(se^{i\theta})&=n_s(\theta)\otimes n_s(\theta)-\frac 13 I & n_s &\colon [-\frac\pi 2,\frac\pi 2]\to\mathbb S^2.
\end{align*}
Since $U(se^{-i\frac\pi 2})=U_0(se^{-i\frac\pi 2})$ we may, up to switching the orientation of $n_0$, assume that
\begin{equation*}
n_s(-\frac\pi 2)=n_0(se^{-i\frac\pi 2}).
\end{equation*}
Since $U(se^{i\frac\pi 2})=U_0(se^{i\frac\pi 2})$ we have $n_s(\frac\pi 2)=\pm n_0(se^{\frac i\pi 2})$, and because $U$ is non-orientable it must be
\begin{equation*}
n_s(\frac\pi 2)=-n_0(se^{i\frac\pi 2}).
\end{equation*}
Note also that, as $\abs{\nabla n_0}=\frac 1{\sqrt 2}\abs{\nabla U_0}\leq 1$, we have
\begin{equation*}
\abs{n_0(se^{-i\frac\pi 2})-n_0(e^{i\frac\pi 2})}\leq 2s\leq 1,
\end{equation*}
which implies that
\begin{equation*}
\dist_{\mathbb S^2}(n_0(se^{-i\frac\pi 2}),-n_0(se^{i\frac\pi 2}))\geq \pi- 4 s
\end{equation*}
Hence we find
\begin{align*}
\int_{\partial B(0,s)\cap\omega_+}\abs{\partial_\tau U}^2 &=\frac 1s \int_{-\frac\pi 2}^{\frac\pi 2}2\abs{n_s'(\theta)}^2\,d\theta \\
&\geq \frac 1s \frac 2{\pi} \left(\int_{-\frac\pi 2}^{\frac\pi 2}\abs{n_s'(\theta)}d\theta\right)^2\\
&\geq \frac 1s \frac{2}{\pi}\dist_{\mathbb S^2}(n_s(-\frac\pi 2),n_s(\frac\pi 2))^2 \\
&\geq\frac 1s \frac{2\pi^2 -16\pi s + 32 s^2}{\pi}\\
&\geq \frac{2\pi}{s}-16,
\end{align*}
thus proving \eqref{eq:halfgeod}.
\end{proof}

In Section~\ref{s:lim} we will also need the following refinement of Lemma~\ref{l:lowerboundhalfannulus}.  In particular, we show that (to $O(1)$) we obtain the same lower bound in a smaller domain which pulls away from the boundary of the half-disk along a curve $x_1=\lambda x_2^\beta$ with $\lambda>0$ and $\beta>1$.

\begin{lemma}\label{l:finelowerboundhalfannulus}
There exists $C>0$ such that the following holds. Let $\xi>0$, $\omega\subset\R^2$ of the form
\begin{equation*}
\omega = B(0,R)\setminus B(0,r),\qquad 0<r<R\leq \frac 12,
\end{equation*}
 $Q\colon\overline \omega\to\mathcal S_0$  an $H^1$ map satisfying $f(Q)\leq \eta$ in $\omega$, such that $U=\pi(Q)\colon\overline\omega\to\mathcal U_\star$ is continuous and nonorientable on $\partial B(0,R)$, and $Q=U_0$ on the left half $\omega_- = \omega\cap \lbrace (x_1,x_2)\colon x_1<0\rbrace$ of the annulus, for some Lipschitz $U_0\colon \R^2\to \mathcal U_\star$ with $\abs{\nabla U_0}\leq 1$. Let also $w$ be a function such that $w(x)\geq 1-\abs{x}^2$. Then, in the right half annulus $\omega_+ =\omega\cap \lbrace (x_1,x_2)\colon x_1>0\rbrace$, we have the lower bound provided by Lemma~\ref{l:lowerboundhalfannulus}. If $Q$ also satisfies a matching upper bound, in the sense that 
\begin{align*}
\int_{\omega_+} \left(\abs{\nabla Q}^2+\frac{1}{\xi^2}f(Q)\right)w(x)\, dx \leq 2\pi \ln \frac{R}{r} +K,
\end{align*}
for some $K>0$, then the lower bound is also valid in the slightly smaller domain 
\begin{equation*}
\tilde\omega_\beta =\omega\cap \lbrace (x_1,x_2)\colon x_1>\lambda x_2^\beta\rbrace,\qquad\lambda>0,\beta>1,
\end{equation*}
in the sense that
\begin{equation*}
\int_{\tilde\omega_\beta} \left(\abs{\nabla Q}^2+\frac{1}{\xi^2}f(Q)\right)w(x)\, dx \geq 2\pi \ln \frac{R}{r} - C \left(1+ \frac{\xi}{r}\right) -CK-\frac{C\lambda}{\beta-1}.
\end{equation*}
\end{lemma}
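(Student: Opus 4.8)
The plan is to slice $\omega_+$ and $\tilde\omega_\beta$ into concentric circular arcs and rerun the one-circle argument of Lemma~\ref{l:lowerboundhalfannulus}, keeping careful track of the energy discarded near the segment $\{x_1=0\}$. Parametrising $\partial B(0,s)\cap\omega_+$ by $\theta\in(-\tfrac\pi2,\tfrac\pi2)$ through $x=s(\cos\theta,\sin\theta)$, the arc $B_s:=\partial B(0,s)\cap(\omega_+\setminus\tilde\omega_\beta)$ one removes to pass from $\omega_+$ to $\tilde\omega_\beta$ lies next to $\theta=\tfrac\pi2$ and has angular width $\phi_s$ obeying the elementary bound $\phi_s\le\tfrac\pi2\lambda s^{\beta-1}$: indeed $s\cos\theta<\lambda(s\sin\theta)^\beta\le\lambda s^{\beta-1}$ forces $\cos\theta<\lambda s^{\beta-1}$, i.e.\ $\tfrac\pi2-\theta<\arcsin(\lambda s^{\beta-1})\le\tfrac\pi2\lambda s^{\beta-1}$. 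Write $A_s:=\partial B(0,s)\cap\tilde\omega_\beta$ for the complementary arc, and let $e_{A_s},e_{B_s}$ denote the arclength energies $\int(|\nabla Q|^2+\xi^{-2}f(Q))\,ds_s$ of $Q$ over $A_s,B_s$.

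First I would establish a refined one-circle estimate on $A_s$, valid for a.e.\ $s\in[r,R]$. As in the proof of Lemma~\ref{l:lowerboundhalfannulus}, for a.e.\ such $s$ the map $U=\pi(Q)$ restricted to $\partial B(0,s)$ is $H^1$ and nonorientable; orienting it on the half-circle as $n_s(\theta)$ and using that $Q=U_0$ on $\omega_-$ with $|\nabla U_0|\le 1$, one gets that the endpoint values are almost antipodal, $\dist_{\mathbb S^2}(n_s(-\tfrac\pi2),n_s(\tfrac\pi2))\ge\pi-4s$. Letting $b_s$ be the $\mathbb S^2$-length of $n_s$ over the omitted angular piece $[\tfrac\pi2-\phi_s,\tfrac\pi2]$, the triangle inequality together with Cauchy--Schwarz on $A_s$ yields
\[
\int_{A_s}|\partial_\tau U|^2\,ds_s \ge \frac{2\pi}{s}-16-\frac{4b_s}{s},\qquad
b_s^2 \le \tfrac12 s\,\phi_s\int_{B_s}|\partial_\tau U|^2\,ds_s \le s\,\phi_s\,e_{B_s},
\]
and Young's inequality gives $\tfrac{4b_s}{s}\le\tfrac12 e_{B_s}+C\phi_s/s$. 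Incorporating the $\dist(\cdot,\mathcal U_\star)$ correction exactly as in Lemma~\ref{l:lowerboundannulus} --- the pointwise bound $|\partial_\tau Q|^2+\xi^{-2}f(Q)\ge(1-cD)|\partial_\tau U|^2+\tfrac\alpha\xi|\partial_\tau[D^2]|$ with $D=\dist(Q,\mathcal U_\star)$, the vanishing of $D$ at the endpoint of $A_s$ lying on $\{x_1=0\}$, and minimisation of $\tfrac{2\pi}{s}(1-cd)+\tfrac\alpha\xi d^2$ over $d\in[0,1]$ --- one arrives, for a.e.\ $s\in[r,R]$, at
\[
e_{A_s} \ge \frac{2\pi}{s}-C\frac{\xi}{s^2}-16-\tfrac12 e_{B_s}-C\frac{\phi_s}{s}.
\]

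It then remains to integrate in $s$ against the weight. Since $w\ge 1-s^2$ on $\partial B(0,s)$, one has $\int_{\tilde\omega_\beta}(\cdots)w\,dx\ge\int_r^R(1-s^2)e_{A_s}\,ds$ and, likewise, $\int_r^R(1-s^2)e_{B_s}\,ds\le\int_{\omega_+\setminus\tilde\omega_\beta}(\cdots)w\,dx$. Using $\int_r^R(1-s^2)\tfrac{2\pi}{s}\,ds\ge 2\pi\ln\tfrac Rr-\pi$, $\int_r^R\xi/s^2\,ds\le\xi/r$, and $\int_r^R\phi_s/s\,ds\le\tfrac\pi2\lambda\int_r^R s^{\beta-2}\,ds\le C\lambda/(\beta-1)$ (here $\beta>1$ and $R\le 1$), and abbreviating $E_\beta:=\int_{\tilde\omega_\beta}(\cdots)w$, $E_+:=\int_{\omega_+}(\cdots)w$, one gets
\[
E_\beta \ge 2\pi\ln\frac Rr-C\Big(1+\frac\xi r\Big)-\tfrac12\big(E_+-E_\beta\big)-\frac{C\lambda}{\beta-1},
\]
hence $\tfrac12 E_\beta\ge 2\pi\ln\tfrac Rr-C(1+\xi/r)-\tfrac12 E_+-C\lambda/(\beta-1)$; plugging in the assumed matching upper bound $E_+\le 2\pi\ln\tfrac Rr+K$ and multiplying by $2$ gives the stated inequality.

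The crux of the argument --- and the sole point where the upper bound hypothesis enters --- is this last step: a priori the energy might concentrate inside the thin omitted wedge, so it cannot simply be thrown away; instead one keeps a fixed fraction ($\tfrac12$) of the omitted-region energy $E_+-E_\beta$ on the left and closes the inequality against the matching upper bound, which is why $K$ appears in the conclusion multiplied by a constant. The ancillary facts used along the way --- that $U|_{\partial B(0,s)}$ is $H^1$ and nonorientable for a.e.\ $s$, and that $\dist(Q,\mathcal U_\star)$ vanishes at the endpoint of $A_s$ on $\{x_1=0\}$ where $Q=U_0\in\mathcal U_\star$ --- are handled precisely as in the proof of Lemma~\ref{l:lowerboundhalfannulus}.
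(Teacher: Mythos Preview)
Your argument is correct and reaches the same conclusion, but the route is genuinely different from the paper's.

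The paper first establishes a \emph{geodesic stability} estimate on each arc: writing $U(se^{i\theta})=n_s(\theta)\otimes n_s(\theta)-\tfrac13 I$, one has
\[
\int_{-\pi/2}^{\pi/2}\bigl||n_s'|-1\bigr|^2\,d\theta \le \int_{-\pi/2}^{\pi/2}|n_s'|^2\,d\theta-\pi+8s,
\]
which, fed through the slice-by-slice lower bound and combined with the matching upper bound, yields $\int_r^R\int_{-\pi/2}^{\pi/2}\bigl||n_s'|-1\bigr|^2\,d\theta\,\tfrac{ds}{s}\le K+C(1+\xi/r)$. The energy in the excised wedge is then controlled via $|n_s'|^2\le 2+2\bigl||n_s'|-1\bigr|^2$ on an arc of angular width $O(\lambda s^{\beta-1})$. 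In other words, the paper proves that near-optimal curves have near-constant speed, and uses this rigidity to bound the loss.

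Your approach bypasses the rigidity step entirely: you bound the endpoint displacement $b_s$ over the removed arc by Cauchy--Schwarz, split the resulting cross term $4b_s/s$ by Young into $\tfrac12 e_{B_s}+C\phi_s/s$, and absorb the $\tfrac12(E_+-E_\beta)$ contribution by rearranging and invoking the upper bound. This is slicker and more elementary for the stated purpose. The paper's route, on the other hand, yields the intermediate estimate on $\int\!\!\int||n_s'|-1|^2$ as a byproduct, which is of some independent interest (a quantitative closeness of $n_s$ to a geodesic) and is in the spirit of estimates used later in \S4.

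One small remark: in the paper's interpretation of $\tilde\omega_\beta$ the excised region is symmetric in $x_2$, so arcs are removed near \emph{both} $\theta=\pm\tfrac\pi2$; you treat only the $\theta=\tfrac\pi2$ side. This is harmless---the identical argument handles the other side, doubling the $\phi_s$ contribution---but if you need $D=\dist(Q,\mathcal U_\star)$ to vanish at an endpoint of $A_s$ for the distance-correction step, note that after removing near both ends neither endpoint lies on $\{x_1=0\}$; you can instead run the pointwise bound on the full half-arc first (as in Lemma~\ref{l:lowerboundhalfannulus}) and then subtract the $B_s$ contribution, or simply use continuity of $D$ up to $\{x_1=0\}$.
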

\begin{proof}[Proof of Lemma~\ref{l:finelowerboundhalfannulus}]
We use the same notations as in the proof of Lemma~\ref{l:lowerboundhalfannulus}, and refine the lower bound obtained on each slice $\partial B(0,s)\cap\omega_+$. The crucial computation is 
\begin{align*}
\int_{-\frac\pi 2}^{\frac\pi 2}\left\vert\abs{n'_s(\theta)}-1\right\vert^2\, d\theta &= \int_{-\frac\pi 2}^{\frac\pi 2}\abs{n'_s(\theta)}^2\, d\theta -2\int_{-\frac\pi 2}^{\frac\pi 2}\abs{n'_s(\theta)}\, d\theta +\pi \\
&\leq \int_{-\frac\pi 2}^{\frac\pi 2}\abs{n'_s(\theta)}^2\, d\theta -2\dist_{\mathbb S^2}(n_s(-\frac\pi 2),n_s(\frac\pi 2)) +\pi \\
&\leq \int_{-\frac\pi 2}^{\frac\pi 2}\abs{n'_s(\theta)}^2\, d\theta -2(\pi-4s) +\pi\\
&\leq \int_{-\frac\pi 2}^{\frac\pi 2}\abs{n'_s(\theta)}^2\, d\theta -\pi +8s.
\end{align*}
This can be interpreted as a stability estimate for geodesics in $\mathbb S^2$: if the lower bound for the energy of a curve (minimized by constant speed geodesics) is almost saturated, then this curve's speed must be close to constant. Plugging this back into the estimates performed in Lemma~\ref{l:lowerboundannulus}, we deduce
\begin{align*}
&\int_{\partial B(0,s)\cap\omega_+}\left(\abs{\nabla Q}^2 +\frac{1}{\xi^2}f(Q) \right)\\
&\geq (1-cd(s))\left(\frac{2\pi}{s}-16 + \frac{2}{s} \int_{-\frac\pi 2}^{\frac\pi 2}\left\vert\abs{n'_s(\theta)}-1\right\vert^2\, d\theta\right)+\frac{\alpha}{\xi}d(s)^2,
\end{align*}
where recall that $c>0$ depends only on $\mathcal U_\star$ and $0\leq d(s)\lesssim \eta$. Hence, possibly lowering $\eta$ we can assume $cd(s)\leq\frac 12$, and arguing as in Lemma~\ref{l:lowerboundannulus} we obtain
\begin{align*}
&\int_{\partial B(0,s)\cap\omega_+}\left(\abs{\nabla Q}^2 +\frac{1}{\xi^2}f(Q) \right)\\
&\geq (1-cd(s))\frac{2\pi}{s} +\frac{\alpha}{\xi}d(s)^2 -16 + \frac{1}{s} \int_{-\frac\pi 2}^{\frac\pi 2}\left\vert\abs{n'_s(\theta)}-1\right\vert^2\, d\theta \\
&\geq \frac{2\pi}{s}-\frac{c^2\pi^2}{\alpha}\frac{\xi}{s^2} -16 + \frac{1}{s} \int_{-\frac\pi 2}^{\frac\pi 2}\left\vert\abs{n'_s(\theta)}-1\right\vert^2\, d\theta.
\end{align*}
Integrating we deduce
\begin{align*}
\int_{\omega_+}\left(\abs{\nabla Q}^2 +\frac{1}{\xi^2}f(Q) \right)w(x)\,dx &
\geq 2\pi\ln\frac Rr -C\left(\frac\xi r +1\right) \\
&\quad + \int_{r}^{R} \int_{-\frac\pi 2}^{\frac\pi 2}\left\vert\abs{n'_s(\theta)}-1\right\vert^2 d\theta \frac{ds}{s}.
\end{align*}
Combining this with the assumption that we have a matching upper bound gives
\begin{align}\label{eq:dnclosetoconstant}
 \int_{r}^{R} \int_{-\frac\pi 2}^{\frac\pi 2}\left\vert\abs{n'_s(\theta)}-1\right\vert^2 d\theta \frac{ds}{s} &\leq K + C\left(\frac\xi r +1\right).
\end{align}
We will use this to show that the part that we \enquote{forget} by integrating over $\tilde\omega_\beta$ instead of $\omega_+$ is bounded. Indeed we have
\begin{align*}
\int_{\partial B(0,s)\cap\tilde\omega_\beta}\abs{\partial_\tau U}^2 
& = \int_{\partial B(0,s)\cap\omega_+}\abs{\partial_\tau U}^2 
-\int_{\partial B(0,s)\cap\omega_+\setminus \tilde\omega_\beta}\abs{\partial_\tau U}^2 \\
& \geq 
\frac{2\pi}{s}-16 \\
&\quad
-\frac{2}{s}\int_{-\frac\pi 2}^{-\frac\pi 2 + 2\lambda s^{\beta-1}}\abs{n_s'(\theta)}^2\,d\theta -\frac{2}{s}\int_{\frac\pi 2-2\lambda s^{\beta-1}}^{\frac\pi 2}\abs{n_s'(\theta)}^2\,d\theta \\
&\geq \frac{2\pi}{s}-16 -\frac{2}{s}\int_{-\frac\pi 2}^{\frac\pi 2}\left\vert\abs{n'_s(\theta)}-1\right\vert^2 d\theta - 4\lambda s^{\beta-2}.
\end{align*}
Notice that since $\beta>1$ the last term is summable with respect to $s$ small .
Hence upon arguing as above we find
\begin{align*}
\int_{\tilde\omega_\beta}\left(\abs{\nabla Q}^2 +\frac{1}{\xi^2}f(Q) \right)w(x)\,dx &\geq 2\pi\ln\frac Rr -C\left(\frac\xi r +1\right) -\frac{4\lambda}{\beta-1}\\
&\quad  -2\int_{r}^{R} \int_{-\frac\pi 2}^{\frac\pi 2}\left\vert\abs{n'_s(\theta)}-1\right\vert^2 d\theta \frac{ds}{s},
\end{align*}
and combining this with \eqref{eq:dnclosetoconstant} enables us to conclude.
\end{proof}

We are finally ready to prove the lower bound on the energy stated in Theorem~\ref{thm:main}.

\begin{proof}[Proof of Part (ii) of Theorem~\ref{thm:main}] 
We start by fixing a Lipschitz  $\mathcal U_\star$-valued extension $U_0$ of $\widetilde Q_\xi$ outside of $\widetilde\Omega$.
In particular, for any circle $\partial B(a,r)$ such that $f(\widetilde Q_\xi)<\eta$ on $\partial B(a,r)\cap \widetilde \Omega$, it makes sense to consider $U_\xi=\pi(\widetilde Q_\xi)$ on $\partial B(a,r)$  (even if part of this circle lies outside of $\widetilde \Omega$). Moreover $U_\xi$ is orientable on $\partial B(a,r)$ if and only if it is orientable on $\partial (B(a,r)\cap\widetilde\Omega)$.

Thanks to Lemma~\ref{l:boundedbadballs} and arguing as in \cite[Theorem~IV.1]{bbh}, there exist $\lambda>0$ and a family of disjoint balls $B(x_j^\xi,\lambda\xi)$ such that
\begin{align*}
&\lbrace f(\widetilde Q_\xi)>\eta\rbrace\cap\lbrace\rho\geq \frac 12\rbrace\subset\bigcup_{j=1}^{J_\xi} B(x_j^\xi,\lambda\xi),\qquad J_\xi\leq M_0,\\
&\abs{x_i^\xi-x_j^\xi}>8\lambda\xi \qquad\forall i\neq j,\\
&x_i^\xi\in\lbrace z=0\rbrace \quad\text{or}\quad \dist(x_i^\xi,\lbrace z=0\rbrace)>8\lambda\xi\qquad\forall i,\\
& x_i^\xi\in\partial\widetilde\Omega \quad\text{or}\quad \dist(x_i^\xi,\partial\widetilde\Omega)>8\lambda\xi\qquad\forall i
\end{align*}

Let us now consider a sequence $\xi=\xi_n\to 0$. To keep notation simple we will not write explicitly the dependence on $n$, and will not relabel subsequences. We may extract converging subsequences of the $x_j^\xi$ that lie inside $\lbrace \abs{z}\leq \frac 12,\rho\leq A_0\rbrace$, and we denote by $a_1,\ldots,a_K$ those of the limit points that lie in the axis $\lbrace z=0\rbrace$, and $z_1=\min(\frac 12,\frac d2)$, where $d$ is the minimal distance of any other limit point to the axis $\lbrace z=0\rbrace$ or of any of the $a_j$'s to the other $a_j$'s. Then for any $\delta\in (0,z_1)$  the balls $B(a_j,\delta)$ are disjoint and we have
\begin{equation*}
\lbrace f(\widetilde Q_\xi)>\eta\rbrace\cap\lbrace\abs{z}\leq z_1\rbrace\subset\bigcup_{j=1}^K B(a_j,\delta)\qquad\text{for small enough }\xi.
\end{equation*}
By Fubini's theorem we may find $z_0\in [z_1/2,z_1]$ and $\rho_0\geq A_0$ such that $\widetilde Q_\xi$ restricted to $\partial R$ is continuous, where $R=\lbrace \abs{z}<z_0,\rho<\rho_0\rbrace\cap\widetilde \Omega$. By the above $f(\widetilde Q_\xi)\leq\eta$ on $\partial R$ for small enough $\xi$, so we may apply Lemma~\ref{l:nontrivialrectangle} to deduce that the $U_\xi=\pi(\widetilde Q_\xi)$ is non-orientable on $\partial R$. As a consequence,  $U_\xi$ must be non-orientable on $\partial B(a_j,z_1/2)$ for at least one index $j$.

Relabeling we assume $j=1$. We claim that $a_1$ must be the leftmost possible point $a=(1,0)$. Assume indeed that $\rho(a_1)>1$ and fix $\delta\in (0,z_1)$ such that $\rho(a_1)-\delta>1$. Then applying Lemma~\ref{l:lowerboundperforated} on $B(a_1,\delta)\setminus \bigcup_{j} B(x_j^\xi,\lambda\xi)$ for small enough $\xi$ we deduce that
\begin{equation*}
\widetilde E_\xi(\widetilde Q_\xi)\geq (\rho(a_1)-\delta)\pi\ln \frac \delta\xi -C,
\end{equation*}
but since $\rho(a_1)-\delta>1$ this implies that $\widetilde E_\xi(\widetilde Q_\xi)-\pi\ln\frac 1\xi\to\infty$ as $\xi\to 0$, thus contradicting the upper bound obtained in Section~\ref{s:upper}. 

Gathering the above, we have that $a=(1,0)$ is a limit of some $x_j^\xi$ and for any $\delta>0$, if $\xi$ is small enough then $U_\xi=\pi(\widetilde Q_\xi)$ is well defined and non-orientable on $\partial B(a,\delta)$. Of the above defined $x_j^\xi$, we now consider only those which converge to $a$. 

Since there is a bounded number of $x_j^\xi$, arguing as in \cite{beaulieu_hadiji_95} one may find a bounded $\mu>1$ and radii $\sigma_i^\xi$ such that
\begin{align*}
& \xi =  \sigma_0^\xi \ll \cdots \ll \sigma_L^\xi = \delta,\qquad L\leq M_0,
\end{align*}
and each ball $B(\sigma_i^\xi,\lambda\xi)$ is contained either in $B(a,\mu\sigma_0)$ or for some $\ell\geq 1$ in  the annulus $B(a,\mu\sigma_{\ell})\setminus B(a,\sigma_\ell)$ for $\ell\geq 1$. Inside each annulus $A_\ell =B(a,\sigma_{\ell})\setminus B(a,\mu\sigma_{\ell-1})$  the map $U_\xi$ is well defined and its homotopy class on $C_s=\partial B(a,s)$ is constant for $s\in [\mu\sigma_{\ell-1},\sigma_\ell]$. We will refer to this constant homototpy class as the homotopy class of $U_\xi$ on the annulus $A_\ell$. For $\ell=L$ we know that $U_\xi$ is non-orientable on $A_L$.

We claim that there exists $\ell_0\in\lbrace 1,\ldots,L\rbrace$ such that $U_\xi$ is orientable on $A_\ell$. Otherwise, $U_\xi$ is non-orientable on all annuli $A_\ell$ which lie outside $B(a,\mu\xi)$.
Thus, flattening the boundary $\partial\widetilde\Omega$ we may for small enough $\delta$ apply Lemma~\ref{l:lowerboundhalfannulus} on each annulus. We deduce the lower bound
\begin{align*}
\widetilde E(\widetilde Q_\xi;\widetilde\Omega\cap B(a,\delta)\setminus B(a,\mu\xi))&\geq \sum_{\ell=1}^L 2\pi\ln\frac{\sigma_\ell}{\mu\sigma_{\ell-1}} -C \\
& \geq 2\pi\ln\frac{\delta}{\xi} -C,
\end{align*}
and this contradicts the upper bound obtained in Section~\ref{s:upper}.

Let us then fix the largest $\ell_0\in\lbrace 1,\ldots,L-1\rbrace$ such that $U_\xi$ is orientable on $A_\ell$, and set $\hat\sigma_\xi = \sigma_{\ell_0}^\xi$. In particular $U_\xi$ is non-orientable on $A_\ell$ for all $\ell\geq \ell_0+1$, hence arguing as above we find the lower bound
\begin{align}
\widetilde E(\widetilde Q_\xi;\widetilde\Omega\cap B(a,\delta)\setminus B(a,2\mu\hat \sigma_\xi))&\geq \sum_{\ell=\ell_0+1}^L 2\pi\ln\frac{\sigma_\ell}{\mu\sigma_{\ell-1}} -C \nonumber\\
& \geq 2\pi\ln\frac{\delta}{\hat\sigma_\xi} -C.\label{eq:lowerboundlargeannulus}
\end{align}
Moreover, since $U_\xi$ is orientable on $\partial B(a,\hat\sigma_\xi)$ and non-orientable on $\partial B(a,\mu\hat\sigma_\xi)$, there must be at least one $x_j^\xi$ such that $B(x_j^\xi,\lambda\xi)\subset B(a,\mu\hat\sigma_\xi)\setminus B(a,\hat\sigma_\xi)$ and $U_\xi$ is non-orientable on $\partial B(x_j^\xi,\lambda\xi)$.

Now we consider only those $x_j^\xi$ which are in $B(a,\mu\hat\sigma_\xi)\setminus B(a,\hat\sigma_\xi)$. Considering the balls $B(x_j^\xi,c_0\hat\sigma_\xi)$ for some small enough $c_0>0$ (depending only on the number of $x_j^\xi$'s) and applying the procedure in \cite[Theorem~IV.1]{bbh}, we obtain a bounded number $c_0\leq\gamma\leq \frac{1}{16}$ and a collection of balls $B(y_j^\xi,\gamma\hat\sigma_\xi)$ satisfying the following:
\begin{align*}
&\lbrace f(\widetilde Q_\xi)>\eta\rbrace \cap B(a,\mu\hat\sigma_\xi)\setminus B(a,\hat\sigma_\xi) \subset\bigcup_{j=1}^{\hat J} B(y_j^\xi,\gamma\hat\sigma_\xi),\qquad\hat J\leq M_0,\\
&\abs{y_i^\xi-y_j^\xi}>8\gamma\hat\sigma_\xi\qquad\forall i\neq j,\\
& y_j^\xi\in\lbrace z=0\rbrace\quad\text{or}\quad\dist(y_j^\xi,\lbrace z=0\rbrace)>8\gamma\hat\sigma_\xi.
\end{align*}
Since all balls $B(y_j^\xi,2\gamma\hat\sigma_\xi)$ are contained in the annulus
$B(a,2\mu\hat\sigma_\xi)\setminus B(a,\hat\sigma_\xi/2)$ 
and for small enough $\xi$ by the above $U_\xi$ is non-orientable on $\partial B(a,2\mu\hat\sigma_\xi)$ 
and orientable on $\partial B(a,\hat\sigma_\xi/2)$, 
we deduce that $U_\xi$ must be non-orientable on $\partial B(y_j^\xi,2\gamma\hat\sigma_\xi)$ 
for at least one $y_j^\xi$. Relabeling we assume this is $y_1^\xi$.

We may apply Lemma~\ref{l:lowerboundperforated} to obtain a lower bound on the energy in $B(y_1^\xi,2\gamma\hat\sigma_\xi)$. This ball may happen to intersect $\partial\widetilde\Omega$, but considering the fixed Lipschitz extension $U_0$ os $\widetilde Q_\xi$ outside $\widetilde\Omega$ we still have the same lower bound. 

Let us first assume that this $y_1^\xi$ does not lie on the axis $\lbrace z=0\rbrace$. Then, taking into accound that $\rho\geq 1-\hat\sigma_\xi^2$ we obtain
\begin{align*}
\widetilde E_\xi(\widetilde Q_\xi;B(y_1^\xi,2\gamma\hat\sigma_\xi)&\geq (1-\hat\sigma_\xi^2)\pi\ln\frac{\hat\sigma_\xi}{\xi} -C.
\end{align*}
By symmetry, if $y_1^\xi$ lies above the axis $\lbrace z=0\rbrace$ then the same lower bound will be obtained below the axis, so we deduce
\begin{align*}
\widetilde E_\xi(\widetilde Q_\xi;\widetilde\Omega\cap B(a,2\mu\hat\sigma_\xi)\setminus B(a,\hat\sigma_\xi/2))&
\geq 2(1-\hat\sigma_\xi^2)\pi\ln\frac{\hat\sigma_\xi}{\xi}-O(1).
\end{align*}
Adding this to the lower bound \eqref{eq:lowerboundlargeannulus} on $B(a,\delta)\setminus B(a,2\mu\hat\sigma_\xi)$, we obtain
\begin{align*}
\widetilde E_\xi(\widetilde Q_\xi)&\geq 2\pi \ln\frac\delta{\hat\sigma_\xi} + 2\pi (1-\hat\sigma_\xi^2)\ln\frac{\hat\sigma_\xi}{\xi} - O(1) \\
&\geq 2\pi (1-\hat\sigma_\xi^2)\ln\frac 1\xi  - O(1).
\end{align*}
Since $\hat\sigma_\xi\to 0$ this contradicts the upper bound from Section~\ref{s:upper}. 

Thus the point $y_1^\xi$ must lie on the axis $\lbrace z=0\rbrace$, hence $\rho\geq 1+\hat\sigma_\xi/2$ on $B(y_1^\xi,2\gamma\hat\sigma_\xi)$, and applying Lemma~\ref{l:lowerboundperforated} we have
\begin{align*}
\widetilde E_\xi(\widetilde Q_\xi;B(y_1^\xi,2\gamma\hat\sigma_\xi))&\geq (1+\frac 12\hat\sigma_\xi)\pi\ln\frac{\hat\sigma_\xi}{\xi} -C.
\end{align*}
Together with the lower bound \eqref{eq:lowerboundlargeannulus} on $B(a,\delta)\setminus B(a,2\mu\hat\sigma_\xi)$, this implies
\begin{align*}
\widetilde E_\xi(\widetilde Q_\xi;\widetilde\Omega\cap B(a,\delta))&\geq 2\pi\ln\frac{\delta}{\hat\sigma_\xi} +\pi(1+\frac 12\hat\sigma_\xi)\ln\frac{\hat\sigma_\xi}{\xi} -C \\
&\geq  \pi\ln\frac 1\xi +\pi\ln\frac{1}{\hat\sigma_\xi} + \frac\pi 2 \hat\sigma_\xi\ln\frac 1\xi -2\pi\ln\frac 1\delta -C.
\end{align*}
Now notice that
\begin{align*}
\inf_{0<\sigma<1}\left\lbrace \pi\ln\frac 1\sigma + \frac\pi 2\sigma\ln\frac 1\xi\right\rbrace
\end{align*}
is attained at $\sigma=2/\ln(\frac 1\xi)$, so the above lower bound implies
\begin{equation*}
\widetilde E_\xi(\widetilde Q_\xi;\widetilde\Omega\cap B(a,\delta))
\geq  \pi\ln\frac 1\xi +\pi\ln\ln\frac 1\xi  -2\pi\ln\frac 1\delta -C.
\end{equation*}
This is the lower bound we have been after. Now recall we have been arguing on an arbitrary sequence $\xi=\xi_n\to 0$ and taking subsequences, so what this proves is that for any $\delta\in (0,\delta_0)$,
\begin{equation*}
\liminf_{\e\to 0}\left(\widetilde E(\widetilde Q_\xi;\widetilde\Omega\cap B(a,\delta))-\pi\ln\frac 1\xi -\pi\ln\ln\frac 1\xi\right) \geq -2\pi\ln\frac 1\delta - C,
\end{equation*}
and this is part $(ii)$ of Theorem~\ref{thm:main}.
\end{proof}

\begin{remark}\label{r:lowerbound}
In the above proof, we obtain lower bounds on $B(a,\delta)\setminus B(a,2\mu\hat\sigma_\xi)$ and on $B(y_1^\xi,2\gamma\hat\sigma_\xi)$ and realize that their sum corresponds to the upper bound obtained in Section~\ref{s:upper}. Therefore in those sets the lower bounds must be sharp, in the sense that matching upper bounds are valid. In particular we have
\begin{equation*}
\widetilde E_\xi(\widetilde Q_\xi;\widetilde\Omega\cap B(a,\delta)\setminus B(a,2\mu\hat\sigma_\xi))\leq 2\pi\ln\frac{\delta}{\hat\sigma_\xi}+C,
\end{equation*}
and find ourselves in the situation of Lemma~\ref{l:finelowerboundhalfannulus}. As a consequence, a lower bound similar to the one in part $(ii)$ of Theorem~\ref{thm:main} is valid on the slightly smaller set
\begin{equation*}
D^{int}_\delta\cap\lbrace \rho\geq 1+\lambda z^{\beta}\rbrace,
\end{equation*}
for any $\lambda>0$ and $\beta>1$.
Combining this with the upper bound obtained in Section~\ref{s:upper} then yields
\begin{equation*}
\widetilde E_\xi(\widetilde Q_\xi;D^{ext}_\delta\cup\lbrace \rho\leq 1+\lambda z^\beta\rbrace)\leq 
2\pi\ln\frac 1\delta + C(\beta,\lambda)\qquad\forall \delta\in (0,1),\,\lambda>0,\,\beta>1.
\end{equation*}
\end{remark}

\section{Limit configuration}\label{s:lim}

In this section we prove part $(iii)$ of Theorem~\ref{thm:main}.  Although in the previous sections we have already established tight upper and lower bounds on the energy away from the equator defect, extracting precise information about the structure of the minimizers and the absence of point singularities will be a multi-step process:

\begin{itemize}
\item
In \S~\ref{ss:limconv} we use standard arguments to establish strong $H^1$ convergence away from the ring defect.
\item
In \S~\ref{ss:limsym} we then establish the additional symmetry property that the director points inside the azimuthal plane (in the sense of Remark~\ref{r:limit_symmetry}). 
The argument of \cite{sandier_shafrir_93} implies that as long as  this symmetry is satisfied at the boundary, it should also be satisfied inside the domain. 
Applying this result in our context is however not straightforward: the limiting map is only minimizing away from the ring defect and it is necessary to cut out a small disc around the defect. 
The boundary conditions are not fixed there and they need to be carefully estimated using the rigidity imposed by the energy asymptotics (in the  spirit of Lemma~\ref{l:finelowerboundhalfannulus}). 
\item
Finally, in \S~\ref{ss:limsmooth}, we use an argument of \cite{colloid} to rule out point defects which are not topologically necessary.
That argument requires even more precise estimates on the boundary conditions. We can only rule out point defects provided that the ring defect is negatively charged---otherwise a positively charged ring defect would have to be compensated by a pair of point defects.  
The possibility of a positively charged ring defect is eventually eliminated by establishing that in a small region around the ring, it could not---compared to a negatively charged ring---improve the energy by more than $o(1)$ as $\xi\to 0$. 
Effectively, this amounts to showing that the core energy of a positively or negatively charged ring defect are the same.
\end{itemize}

\subsection{Strong convergence}\label{ss:limconv}
We start by establishing strong $H^1$ convergence along a subsequence.

\begin{lemma}\label{l:H1conv}
There is a subsequence $\xi\to 0$ and $Q_\star$ such that for all $\delta>0$, the sequence $Q_\xi$ converges strongly in $H^1_{loc}(\Omega^{ext}_\delta)$ to $Q_\star\in\mathcal H^\star_{sym}$ and
\begin{align*}
E(Q_\xi;\Omega^{ext}_\delta)\longrightarrow E(Q_\star;\Omega^{ext}_\delta)\quad\text{and}\quad
\frac{1}{\xi^2}\int_{\Omega^{ext}_\delta}f(Q_\xi)\longrightarrow 0,
\end{align*}
as $\xi\to 0$. Moreover, in any relatively open subset $U\subset\overline\Omega$ where $Q_\star$ is smooth, we have $Q_\xi\to Q_\star$ in $C^{1,\alpha}_{loc}(U)$ for all $\alpha\in (0,1)$.
\end{lemma}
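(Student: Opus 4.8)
The plan is to combine the uniform energy bounds of Theorem~\ref{thm:main}(i) with the $\eta$-ellipticity results and the $C^{1,\alpha}$ regularity theory for the Euler--Lagrange system \eqref{ELeq}. First I would fix $\delta>0$ and work in $\Omega^{ext}_\delta$ (equivalently in $\widetilde\Omega\cap\{\dist(\cdot,(1,0))>\delta\}$ in the reduced two-dimensional picture), where Theorem~\ref{thm:main}(i) gives $E_\xi(Q_\xi;\Omega^{ext}_\delta)\le E_\xi(Q_\xi)\le C(\xi)$, but more importantly where Remark~\ref{r:lowerbound}, together with the upper bound, yields the \emph{sharp} bound $\widetilde E_\xi(\widetilde Q_\xi;D^{ext}_\delta)\le 2\pi\ln\frac1\delta+C$, uniformly in $\xi$. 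In particular, on every $\Omega^{ext}_\delta$ the energies $E_\xi(Q_\xi;\Omega^{ext}_\delta)$ are bounded uniformly in $\xi$, so by a diagonal extraction over $\delta=1/k$ we obtain a subsequence along which $Q_\xi\rightharpoonup Q_\star$ weakly in $H^1_{loc}(\overline\Omega\setminus\mathcal C)$ and a.e.; since $Q_\xi\in\mathcal H_{sym}$ the limit inherits the quadrupolar symmetry and the boundary condition $Q_\star=Q_b$ on $\{|x|=1\}\setminus\mathcal C$. The uniform energy bound also forces $\frac1{\xi^2}\int_{\Omega^{ext}_\delta}f(Q_\xi)\to0$ provided we have established strong $H^1$ convergence; this in turn gives $f(Q_\star)=0$ a.e., i.e.\ $Q_\star\in H^1_{loc}(\Omega^{ext}_\delta;\mathcal U_\star)$, and membership $Q_\star\in\mathcal H^\star_{sym}$ then follows from Fatou applied to the weighted $L^2$ term $\int\abs{Q-Q_\infty}^2/\abs{x}^2$.

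The heart of the argument is upgrading weak convergence to strong $H^1$ convergence and $C^{1,\alpha}_{loc}$ convergence. Fix a compact $K\subset\overline\Omega\setminus\mathcal C$. I would first show that $f(Q_\xi)\le\eta$ on $K$ for $\xi$ small: away from the axis $\{\rho\le\frac12\}$ this is Lemma~\ref{l:boundedbadballs} combined with the fact that (by Remark~\ref{r:lowerbound} / Theorem~\ref{thm:main}(ii)) all the ``bad balls'' converge to the single point $a=(1,0)$, hence stay outside $K$; near the axis $\{\rho\le\frac12\}$ one uses instead a standard clearing-out/$\eta$-compactness argument for the full three-dimensional Ginzburg--Landau-type system (cf.\ \cite{canevari3d,colloid}) together with the uniform energy bound, noting that $a=(1,0)\notin\{\rho\le\frac12\}$. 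Once $f(Q_\xi)\le\eta$ on a neighborhood of $K$, the rescaled map $\widehat Q(y)=Q_\xi(x_0+\xi y)$ solves a uniformly elliptic system with smooth right-hand side, and Lemma~\ref{l:firstbounds} gives uniform $C^{0}$ and, after bootstrapping through Schauder estimates, uniform $C^{2,\alpha}$ bounds on the rescaled maps — but more to the point, since on $K$ the potential stays in the smooth region of $\mathcal U_\star$, the standard Ginzburg--Landau regularity machinery (as in \cite{bbh}, adapted to $Q$-tensors in \cite{canevari2d,golovatymontero14}) gives uniform $C^{1,\alpha}_{loc}$ bounds for $Q_\xi$ itself on $K$ \emph{away from} the bad balls, hence on all of $K$ for $\xi$ small. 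Arzel\`a--Ascoli then yields $Q_\xi\to Q_\star$ in $C^{1,\alpha}_{loc}(\overline\Omega\setminus\mathcal C)$, which in particular gives strong $H^1_{loc}$ convergence, the energy convergence $E_\xi(Q_\xi;\Omega^{ext}_\delta)\to E(Q_\star;\Omega^{ext}_\delta)$ (the potential term passing to zero because $\frac1{\xi^2}f(Q_\xi)\to0$ pointwise and is dominated using the uniform $C^0$ bound and $f(Q_\star)=0$), and smoothness of $Q_\star$ on $\overline\Omega\setminus\mathcal C$ by elliptic regularity for the harmonic-map-type system satisfied by the $\mathcal U_\star$-valued limit.

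Finally, to get that $Q_\star$ is \emph{locally minimizing} away from the ring (which is what $\mathcal H^\star_{sym}$ and the later lemmas require), I would use the strong convergence together with a standard comparison argument: given any competitor $\widehat Q$ agreeing with $Q_\star$ outside a compact $K\subset\overline\Omega\setminus\mathcal C$, one splices it into $Q_\xi$ on a slightly larger set (using the $C^{1,\alpha}$ convergence to interpolate on a thin collar, at $o(1)$ energy cost, as in \cite{bbh}) to build an admissible competitor for $Q_\xi$; minimality of $Q_\xi$ and the energy convergence then give $E(Q_\star;K)\le E(\widehat Q;K)$ in the limit. The main obstacle is the clearing-out near the symmetry axis $\{\rho\le\frac12\}$: there the energy is genuinely three-dimensional and Lemma~\ref{l:boundedbadballs} does not apply, so one must invoke a separate $\eta$-compactness statement (available from \cite{canevari3d}, whose hypotheses are met since the ring defect is far from the axis and the energy there is uniformly bounded) to locate and rule out stray defects on the axis; once that is in hand, everything else is a routine adaptation of classical Ginzburg--Landau convergence theory.
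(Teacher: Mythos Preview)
Your plan has a genuine gap and also proves a statement stronger than the lemma actually claims. You try to obtain strong $H^1$ convergence \emph{via} uniform $C^{1,\alpha}$ bounds obtained from clearing-out, but near the symmetry axis $\{\rho\le\frac12\}$ this route is blocked: the $\eta$-compactness results you cite (e.g.\ from \cite{canevari3d}) require the local energy to be \emph{small}, whereas here it is only \emph{bounded}, and at this stage of the argument the limit map may well have point singularities on the axis (ruling those out is precisely the hard content of \S\ref{ss:limsym}--\ref{ss:limsmooth}). Consequently your assertion that ``smoothness of $Q_\star$ on $\overline\Omega\setminus\mathcal C$ follows from elliptic regularity for the harmonic-map-type system'' is simply false in three dimensions, and your Arzel\`a--Ascoli step cannot deliver strong $H^1_{loc}(\Omega^{ext}_\delta)$ convergence across such potential singular points. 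A secondary issue: your justification that $\frac{1}{\xi^2}\int f(Q_\xi)\to0$ by ``pointwise convergence and domination'' does not work, since the integrand carries a factor $1/\xi^2$ and no fixed dominating function is available.

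The paper's proof takes a completely different and more robust route that sidesteps all regularity questions about $Q_\star$. After extracting a weak $H^1_{loc}$ limit, one fixes a good slice $T_\delta=\partial\Omega^{ext}_\delta\cap\Omega$ (via Fubini) on which the traces converge in $C^{0,\alpha}$, and lets $Q_0$ be the $\mathcal U_\star$-valued minimizer of $E_\star$ on $\Omega^{ext}_\delta$ with boundary datum $Q_\star$ on $T_\delta$. A careful interpolation (first linearly from $\widetilde Q_\xi$ to its projection $\pi(\widetilde Q_\xi)$, then geodesically in $\mathcal U_\star$ to $\widetilde Q_0$, in a wedge-shaped layer that respects the boundary condition on $\{\rho^2+z^2=1\}$) produces a competitor $\overline Q_\xi$ equal to $Q_\xi$ on $\Omega^{int}_\delta$ with $E_\xi(\overline Q_\xi;\Omega^{ext}_\delta)\to E_\star(Q_0;\Omega^{ext}_\delta)$. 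Minimality of $Q_\xi$ and weak lower semicontinuity then squeeze $E_\star(Q_\star;\Omega^{ext}_\delta)\le\liminf E_\xi(Q_\xi;\Omega^{ext}_\delta)\le\limsup\le E_\star(Q_0;\Omega^{ext}_\delta)$, forcing equality throughout; this yields simultaneously that $Q_\star$ is minimizing, that $\int|\nabla Q_\xi|^2\to\int|\nabla Q_\star|^2$ (hence strong $H^1$), and that $\frac{1}{\xi^2}\int f(Q_\xi)\to0$ by energy balance. Only \emph{after} strong $H^1$ convergence is in hand does one invoke \cite{majumdarzarnescu10,nguyen_zarnescu_13} to obtain $C^{1,\alpha}_{loc}$ convergence, and only on open sets where $Q_\star$ happens to be smooth---exactly as the lemma is stated.
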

\begin{proof}[Proof of Lemma~\ref{l:H1conv}]
For any $\delta>0$, the upper and lower bound imply
\begin{equation*}
\frac 1{2\pi} E(Q_\xi;\Omega_\delta^{ext})\leq 2\pi\ln\frac 1\delta +C.
\end{equation*}
By a diagonal argument, this bound implies the existence a limit map $Q_\star\in \mathcal H^\star_{sym}$ such that, up to a subsequence, $Q_\xi$ converges weakly to $Q_\star$ in $H^1_{loc}(\Omega^{ext}_\delta)$ for every $\delta>0$.
We denote by $T_\delta$ the part of $\partial\Omega^{ext}_\delta$ that lies inside $\Omega$, namely in cylindrical coordinates
\begin{equation*}
T_\delta=\Omega\cap \lbrace (\rho-1)^2+z^2=\delta^2\rbrace.
\end{equation*}
 By Fubini's theorem we may (upon extracting a further subsequence) fix $\delta$ arbitrarily small  such that
\begin{equation}\label{eq:QTdelta}
E_\xi(Q_\xi;T_\delta)+E_\star(Q_\star;T_\delta)\lesssim \frac 1\delta \left(E_\xi(Q_\xi;\Omega^{ext}_{\delta/2}) + E_\star(Q_\star;\Omega^{ext}_{\delta/2})\right)\lesssim C(\delta).
\end{equation} 
In particular  the trace of $Q_\xi$ on $T_\delta$ is bounded in $H^1(T_\delta)$, and converges therefore also weakly in $H^1(T_\delta)$.
The map $Q_\xi$ is of the form
\begin{equation*}
Q_\xi(\rho,\varphi,z)=R_\varphi\widetilde Q_\xi(\rho,z) R_\varphi^t,
\end{equation*}
and $\widetilde Q_\xi$ converges weakly in $H^1_{loc}(\widetilde \Omega^{ext}_\delta)$ and in $H^1(\widetilde T_\delta)$  to $\widetilde Q_\star$ such that
\begin{equation*}
Q_\star(\rho,\varphi,z)=R_\varphi\widetilde Q_\star(\rho,z) R_\varphi^t.
\end{equation*}
(Here $\widetilde \Omega^{ext}_\delta$ and $\widetilde T_\delta$ denote the two-dimensional sections of $\Omega_\delta^{ext}$ and $T_\delta$ corresponding to the $(\rho,z)$ coordinates.)
Consider a map $Q_0\in\mathcal H_{sym}^\star(\Omega^{ext}_\delta)$ satisfying 
\begin{equation*}
E_\star(Q_0;\Omega^{ext}_\delta)=\min\left\lbrace E_\star(Q;\Omega^{ext}_\delta)\colon Q\in\mathcal H_{sym}^\star(\Omega^{ext}_\delta),\,Q=Q_\star\text{ on }T_\delta\right\rbrace.
\end{equation*}
Since $\widetilde Q_\xi$ converges weakly in $H^1(\widetilde T_\delta)$ to $ (\widetilde Q_\star)_{\lfloor \widetilde T_\delta}=(\widetilde Q_0)_{\lfloor \widetilde T_\delta}$ and $\widetilde T_\delta$ is a one-dimensional curve, on this curve  $\widetilde Q_\xi$ converges to  $\widetilde  Q_0$ in $C^{0,\alpha}(T_\delta)$ for any $\alpha\in (0,1/2)$. We claim that this allows to construct a map $\overline Q_\xi\in \mathcal H_{sym}$ such that
\begin{equation}\label{eq:overlineQ}
\overline Q_\xi=Q_\xi\quad\text{in }\Omega^{int}_\delta=\Omega\setminus\Omega_\delta^{ext}\qquad\text{ and } E_\xi(\overline Q_\xi;\Omega^{ext}_\delta)-E_\star(Q_0;\Omega^{ext}_\delta)\to 0\quad\text{as }\xi\to 0.
\end{equation}  
In order to define $\overline Q_\xi$ in $\Omega^{ext}_\delta$ we introduce the cross-sectional domains
\begin{equation*}
D=\left\lbrace (\rho,z)\colon \rho,z>0,\,\rho^2+z^2>1\right\rbrace,
\end{equation*}
and
\begin{equation}\label{eq:Dext}
\begin{aligned}  D^{ext}_\delta&=\left\lbrace (\rho,z) \colon \ (\rho-1)^2+z^2>\delta^2, \ z>0 \right\rbrace,  \\
D^{int}_\delta 
   &= \left\{(\rho,z) \colon \ (\rho-1)^2+z^2\le \delta^2, \ z>0 \right\rbrace
      = D\setminus D^{ext}_\delta .
\end{aligned}
\end{equation}
We use polar coordinates $(r,\theta)$ centered at $(\rho,z)=(1,0)$. In these coordinates the domains $D$ and $D^{ext}_\delta$ are given by 
\begin{gather}\nonumber
D=\{ (r,\theta): \ 0<\theta<\theta_0(r), \ r>0\}, \quad
D^{ext}_\delta = \{ (r,\theta): \ 0<\theta<\theta_0(r), \ r>\delta\}\\
  \label{eq:Ddelta}
   \text{where} \ \theta_0(r)=\begin{cases}
    \frac\pi 2 +\arcsin\frac r2, &\text{if $r\le\sqrt{2}$,}\\
           \frac\pi 2 +\arcsin\frac{1}{r}, &\text{if $r>\sqrt{2}$.} \end{cases}
\end{gather}
Then, the desired map $\overline Q_\xi$ will have the form
\begin{align*}
\overline Q_\xi(\rho,\varphi,z)&=R_\varphi \widehat Q_\xi(\rho,z)R_\varphi^t,
\end{align*}
and it suffices to define $\widehat Q_\xi$ in the domain $D^{ext}_\delta$.

 The standard idea is to introduce a thin slice $\lbrace \delta < r<(1+\lambda)\delta\rbrace$  where we interpolate from $\widehat Q_\xi =\widetilde Q_\xi$ at $r=\delta$ to $\widehat Q_\xi=\widetilde Q_0$ at $r=(1+\lambda)\delta$, and choose $\lambda$ in order that the energy of $\widehat Q_\xi$ in that thin slice be negligible. Then we extend by $\widetilde Q_0$ (slightly rescaled) in $D_{(1+\lambda)\delta}$. Here, however, we need to be more careful because we have to preserve the boundary condition at $\rho^2+z^2=1$, i.e., in polar coordinates at $\theta=\theta_0(r)=\frac\pi 2+\arcsin\frac r2$. Hence we interpolate instead in a deformed slice $S_\lambda$ of the form
\begin{equation*}
S_\lambda = \lbrace \delta < r < (1+\tilde\lambda(\theta))\delta,\, 0<\theta<\theta_0(\delta)\rbrace,\qquad
\tilde\lambda(\theta)=(\theta_0(\delta)-\theta)\lambda,
\end{equation*}
for some $\lambda>0$ to be chosen later. 
Next we define $\widehat Q_\xi$ in the deformed slice $S_\lambda$. As noted in \cite[Lemma~B.2]{anisotrop},  with a simple linear interpolation we might fail at controlling the potential part of the energy $\int f(\widehat Q_\xi)$, and we need to proceed in two steps as in \cite{anisotrop}, namely first interpolate linearly between $\widetilde Q_\xi$ and its projection $\pi(\widetilde Q_\xi)$ onto $\mathcal U_\star$, and then interpolate geodesically between $\pi(\widetilde Q_\xi)$ and $\widetilde Q_0$ inside $\mathcal U_\star$. To this end we denote by 
\begin{equation*}
\gamma\colon [0,1]\times V\to\mathcal U_\star,\qquad V\text{ is a neighborhood of }\lbrace (U,U)\colon U\in\mathcal U_\star\rbrace\text{ in }\mathcal U_\star\times\mathcal U_\star,
\end{equation*}
the smooth map such that $t\mapsto \gamma(t;U_1,U_2)$ is the constant speed geodesic from $U_1$ to $U_2$ in $\mathcal U_\star$, which is indeed unique, well-defined and depends smoothly on $U_1,U_2$ provided $U_1,U_2$ are close enough to each other. This map satisfies the bounds
\begin{equation*}
\abs{\partial_t\gamma(t;U_1,U_2)}\lesssim\abs{U_1-U_2},\qquad \abs{\partial_U\gamma}\lesssim 1.
\end{equation*}
In $S_\lambda$ we define
\begin{align*}
&\widehat Q(r,\theta)=\widetilde Q_\xi(\delta,\theta)+\mu_1(r,\theta)\left(\pi(\widetilde Q_\xi(\delta,\theta))-\widetilde Q_\xi(\delta,\theta)\right) \quad\text{for }\delta <r <\delta + \frac{\tilde\lambda(\theta)}{2}\delta,\\
&\text{where }\mu_1(r,\theta) =\frac{2}{\tilde\lambda(\theta)\delta}(r-\delta),\\
&\widehat Q(r,\theta)=
\gamma\left(\mu_2(r,\theta);\pi(\widetilde Q_\xi(\delta,\theta),\widetilde Q_0(\delta,\theta)\right)\quad\text{for }\delta + \frac{\tilde\lambda(\theta)}{2}\delta<r <\delta +\tilde\lambda(\theta)\delta,\\
&\text{where }\mu_2(r,\theta)= \frac{2}{\tilde\lambda(\theta)\delta}(r-\delta-\frac{\tilde\lambda(\theta)}{2}\delta).
\end{align*}
Note that $\pi(\widetilde Q_\xi(\delta,\theta))$ is well-defined for small $\xi$ because $\widetilde Q_\xi$ converges uniformly to $\widetilde Q_0$ on $\widetilde T_\delta$.
Direct computations then show that
\begin{align*}
\abs{\partial_r \widehat Q}&\lesssim \frac{1}{\delta\lambda}\frac{1}{\theta_0(\delta)-\theta}\left(\abs{\pi(\widetilde Q_\xi(\delta,\theta))-\widetilde Q_\xi(\delta,\theta)}
+ \abs{\pi(\widetilde Q_\xi(\delta,\theta))-\widetilde Q_0(\delta,\theta)}\right)
\\
&\lesssim \frac{1}{\delta\lambda}\frac{1}{\theta_0(\delta)-\theta}\abs{\widetilde Q_\xi(\delta,\theta)-\widetilde Q_0(\delta,\theta)}\\
\abs{\partial_\theta\widehat Q}&\lesssim\abs{\partial_\theta\widetilde Q(\delta,\theta)}+\abs{\partial_\theta\widetilde Q_0(\delta,\theta)}\\
&\quad +\frac{1}{\theta_0(\delta)-\theta}\left(\abs{\pi(\widetilde Q_\xi(\delta,\theta))-\widetilde Q_\xi(\delta,\theta)}
+ \abs{\pi(\widetilde Q_\xi(\delta,\theta))-\widetilde Q_0(\delta,\theta)}\right),\\
&\lesssim 
\abs{\partial_\theta\widetilde Q(\delta,\theta)}+\abs{\partial_\theta\widetilde Q_0(\delta,\theta)}
+\frac{1}{\theta_0(\delta)-\theta}\abs{\widetilde Q_\xi(\delta,\theta)-\widetilde Q_0(\delta,\theta)}
\end{align*}
Next, we denote by $\sigma=\sigma(\xi)$ the quantity
\begin{equation*}
\sigma=\norm{\widetilde Q_\xi-\widetilde Q_0}_{C^{1/4}(\widetilde T_\delta)}\longrightarrow 0\qquad\text{as }\xi\to 0,
\end{equation*}
and notice that since the boundary conditions on $\rho^2+z^2=1$ ensure that $\widetilde Q_\xi(\delta,\theta_0(r))=\widetilde Q_0(\delta,\theta_0(r))$, we have
\begin{equation*}
\abs{\widetilde Q_\xi(\delta,\theta)-\widetilde Q_0(\delta,\theta)}\leq \sigma \cdot(\theta_0(\delta)-\theta)^{\frac 14}.
\end{equation*}
Therefore, the above estimates on the derivatives of $\widehat Q$ imply
\begin{align*}
\abs{\nabla \widehat Q}^2&=\abs{\partial_r\widehat Q}^2+\frac{1}{r^2}\abs{\partial_\theta \widehat Q}^2\\
&\lesssim \frac{1}{\delta^2\lambda^2}\frac{1}{(\theta_0(\delta)-\theta)^{\frac 32}}\sigma^2 +\frac{1}{\delta^2}\left(\abs{\partial_\theta\widetilde Q(\delta,\theta)}^2+\abs{\partial_\theta\widetilde Q_0(\delta,\theta)}^2 \right),
\end{align*}
and
\begin{align*}
\int_{S_\lambda} \abs{\nabla \widehat Q}^2 &= \int_0^{\theta_0(\delta)}\int_{\delta}^{\delta+\delta\lambda(\theta_0(\delta)-\theta)}rdr \abs{\nabla \widehat Q}^2 d\theta \\
&\lesssim  \delta^2 \lambda \int_0^{\theta_0(\delta)}(\theta_0(\delta)-\theta)\\
&\hspace{3em}\cdot\left[
\frac{1}{\delta^2\lambda^2}\frac{1}{(\theta_0(\delta)-\theta)^{\frac 32}}\sigma^2 +\frac{1}{\delta^2}\left(\abs{\partial_\theta\widetilde Q(\delta,\theta)}^2+\abs{\partial_\theta\widetilde Q_0(\delta,\theta)}^2 \right)
\right]d\theta\\
&\lesssim \frac{\sigma^2}{\lambda} +\lambda C(\delta).
\end{align*}
For the last inequality we used the fact that, thanks to \eqref{eq:QTdelta}, the $L^2$ norms of $\partial_\theta \widetilde Q_\xi(\delta,\theta)$ and $\partial_\theta \widetilde Q_0(\delta,\theta)$ are bounded for $\delta$ fixed. Note moreover that the definition of $\widehat Q$ ensures that 
\begin{equation*}
f(\widehat Q)\lesssim \dist^2(\widehat Q,\mathcal U_\star)\leq\dist^2(\widetilde Q_\xi(\delta,\theta),\mathcal U_\star)\lesssim f(\widetilde Q_\xi(\delta,\theta)),
\end{equation*}
thanks to the nonegeneracy property \eqref{eq:fdistU*} of the potential $f$. Using this pointwise inequality and \eqref{eq:QTdelta} we infer that
\begin{equation*}
\frac{1}{\xi^2}\int_{S_\lambda}f(\widehat Q_\xi)\lesssim \lambda C(\delta).
\end{equation*}
Finally, since $\rho\approx 1$ and $\Xi[\widehat Q]\lesssim 1$ in $S_\lambda$ we deduce from the above that
\begin{equation*}
\widetilde E_\xi(\widehat Q_\xi;S_\lambda)\lesssim \frac{\sigma^2}{\lambda} +\lambda C(\delta).
\end{equation*}
Choosing $\lambda=\sigma$ then implies that
\begin{equation*}
\widetilde E_\xi(\widehat Q_\xi;S_\lambda)\lesssim  C(\delta)\sigma\longrightarrow 0\quad\text{as }\xi\to 0.
\end{equation*}
Next we define $\widehat Q_\xi$ in $D^{ext}_\delta\setminus S_\lambda$ by setting
\begin{align*}
\widehat Q_\xi(r,\theta)&=\widetilde Q_0\left(\delta+\frac{1}{1-\tilde\lambda(\theta)}(r-\delta-\tilde\lambda(\theta)\delta),\theta\right)\qquad \text{for }\delta+\tilde\lambda(\theta)\delta < r < 2\delta,\\
\widehat Q_\xi &=\widetilde Q_0\qquad\text{in }D_{2\delta}.
\end{align*}
Then we have
\begin{align*}
\abs{\widetilde E_\xi(\widehat Q_\xi;D^{ext}_\delta\setminus S_\lambda)-\widetilde E_\star(\widetilde Q_0;D^{ext}_\delta)}&=\abs{\widetilde E_\star(\widehat Q_\xi;D^{ext}_\delta\setminus S_\lambda)-\widetilde E_\star(\widetilde Q_0;D^{ext}_\delta)}\\
&\lesssim \lambda \widetilde E_\star(\widetilde Q_0;D^{ext}_\delta)\longrightarrow 0,
\end{align*}
as $\xi\to 0$, since $\lambda=\sigma\to 0$. Combining this and the fact that $\widetilde E_\xi(\widehat Q_\xi;S_\lambda)\to 0$, we deduce that
\begin{equation*}
\widetilde E_\xi(\widehat Q_\xi;D^{ext}_\delta)-\widetilde E_\star(\widetilde Q_0;D^{ext}_\delta)\longrightarrow 0,
\end{equation*}
which implies the desired estimate \eqref{eq:overlineQ}.
By minimality of $Q_\xi$ we then have
\begin{equation*}
E_\xi(Q_\xi;\Omega^{ext}_\delta)\leq E_\xi(\overline Q_\xi;\Omega^{ext}_\delta)\leq E_\star(Q_0;\Omega^{ext}_\delta)+o(1),
\end{equation*}
and by lower semicontinuity we deduce
\begin{align*}
E_\star(Q_\star;\Omega^{ext}_\delta)&\leq\liminf E_\xi(Q_\xi;\Omega^{ext}_\delta) \leq \limsup E_\xi(\overline Q_\xi;\Omega^{ext}_\delta)\leq E_\star( Q_0;\Omega^{ext}_\delta).
\end{align*}
It follows that $Q_\star$ minimizes $E_\star(\cdot;\Omega^{ext}_\delta)$ among all maps $Q\in\mathcal H^\star_{sym}(\Omega^{ext}_\delta)$ such that $Q=Q_\star$ on $T_\delta$, and that all above inequalities are in fact equalities. Thus we have
\begin{equation*}
\int_{\Omega^{ext}_\delta}\abs{\nabla Q_\xi}^2\longrightarrow \int_{\Omega^{ext}_\delta}\abs{\nabla Q_\star}^2\quad\text{ and }\quad \frac{1}{\xi^2}\int_{\Omega^{ext}_\delta}f(Q_\xi)\longrightarrow 0.
\end{equation*}
In particular, together with the weak convergence, this implies that $\nabla Q_\xi$ converges strongly in $L^2(\Omega^{ext}_\delta)$ towards $\nabla Q_\star$, that is, the convergence is in fact strong.

The local $C^{1,\alpha}$ convergence away from singularities follows from the analysis in \cite{majumdarzarnescu10,nguyen_zarnescu_13}. There the authors consider minimizers which are not subject to the constraint of axial symmetry, but they only use the minimizing property to obtain the strong $H^1$ convergence, which we have just obtained, so their results apply directly.
\end{proof}

\subsection{Reduction to a director in a cross-section}\label{ss:limsym}

The limiting $Q$-tensor $Q_*$ inherits the symmetries \eqref{eq:Hsym} of the space $\mathcal{H}_{sym}$, but it also exhibits further symmetry by virtue of energy minimization.  Here we show that the map $Q_\star$ may be represented by a uniaxial tensor with a unit director field $n=(n_1(\rho,z),0,n_3(\rho,z))$, expressed in cylindrical coordinates in a cross-section of $\Omega$.

\medskip

In the sequel, whenever we refer to $\xi\to 0$, we will always mean convergence along the subsequence obtained in Lemma~\ref{l:H1conv}.

Since the limit $Q_\star$ is symmetric, it is characterized by a map defined in the two-dimensional domain $D$.
To describe $Q_\star$ further, it will also be convenient to introduce the following notations:
\begin{align*}
\hat E(n;D^{ext}_\delta)& =\int_{D^{ext}_\delta} \left(\abs{\nabla n}^2+\frac{n_1^2+n_2^2}{\rho^2} \right)\rho\, d\rho\, dz\quad\text{for }n\in H^1_{loc}(D^{ext}_\delta;\mathbb S^2),\\
\hat{\mathcal H}(D^{ext}_\delta) &=\Big\lbrace n\in H^1_{loc}(D^{ext}_\delta;\mathbb S^2)\colon \hat E(n;D^{ext}_\delta)<\infty,\\
&\hspace{10em} n\otimes n =e_r\otimes e_r \text{ on }\partial D^{ext}_\delta\cap\lbrace\rho^2+z^2=1\rbrace,\\
&\hspace{10em} n\otimes n =e_3\otimes e_3\text{ on }\partial D^{ext}_\delta\cap\lbrace z=0\rbrace \Big\rbrace.\\
\hat {\mathcal H} &=\bigcap_{\delta>0} \hat{\mathcal H}(D^{ext}_\delta),
\end{align*}
where $D^{ext}_\delta$ is defined in \eqref{eq:Dext}.
The symmetry and minimizing property of $Q_\star$ allow us to express it in terms of a map $n$ which minimizes $\hat E$ in $\hat{\mathcal H}$. Specifically, we have:

\begin{lemma}\label{l:n}
The map $Q_\star$ is given in cylindrical coordinates by
\begin{equation*}
Q_\star(\rho,\varphi,z) = R_\varphi n(\rho,z) \otimes R_\varphi n(\rho,z)-\frac 13 I,
\end{equation*}
where $n\in H^1_{loc}(\widetilde\Omega)$ satisfies $n(\rho,-z)=-Sn(\rho,z)$ and, when restricted to $\lbrace z>0\rbrace$, $n\in\hat{\mathcal H}$ minimizes $\hat E(\cdot;D^{ext}_\delta)$ for all $\delta>0$, among all maps $m\in\hat{\mathcal H}(D^{ext}_\delta)$ such that $m\otimes m=n\otimes n$ on $\partial D^{ext}_\delta \cap D$.

Moreover, up to replacing $n$ by $-n$, we have
\begin{equation}\label{eq:noriented}
n=e_r\text{ on }\partial D^{ext}_\delta\cap\lbrace \rho^2+z^2=1\rbrace\quad\text{and}\quad n=\tau e_3\text{ on }\partial D^{ext}_\delta\cap\lbrace z=0\rbrace,
\end{equation}
for some $\tau\in\lbrace \pm 1\rbrace$.
\end{lemma}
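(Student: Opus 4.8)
The plan is to push the symmetry and minimality of $Q_\star$ down to its cross‑section $\widetilde Q_\star$, extract enough regularity to lift $\widetilde Q_\star$ to a director field, and then carefully track orientations. Since $Q_\star\in\mathcal H^\star_{sym}$ the profile $\widetilde Q_\star$ is $\mathcal U_\star$-valued, and, writing $E_\star$ in cylindrical coordinates, it is a \emph{local} minimizer on $\widetilde\Omega$ of the associated two–dimensional weighted energy: any competitor supported in a ball compactly contained in $\widetilde\Omega$ can be turned, via $V\mapsto R_\varphi V R_\varphi^t$ together with the mirror extension \eqref{eq:mirrorsym}, into an admissible symmetric $\mathcal U_\star$-valued competitor for $Q_\star$ on some $\Omega^{ext}_\delta$ agreeing with $Q_\star$ on $T_\delta$, so Lemma~\ref{l:H1conv} forbids lowering the energy. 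Because the weight $\rho$ and the lower–order term $\rho^{-2}\Xi$ are smooth and non‑degenerate on $\widetilde\Omega$, standard regularity theory for two–dimensional minimizing harmonic maps (no interior singularities, plus regularity up to the smooth boundary datum $Q_b$) shows $\widetilde Q_\star$ is smooth on $\widetilde\Omega$ and up to the arc $\{\rho^2+z^2=1\}\setminus\{(1,0)\}$. As $\widetilde\Omega$ is simply connected, the $2$-to-$1$ covering $\mathbb S^2\to\mathcal U_\star$, $n\mapsto n\otimes n-\frac13 I$, lifts $\widetilde Q_\star$ to a smooth $n\colon\widetilde\Omega\to\mathbb S^2$, so $Q_\star=R_\varphi n\otimes R_\varphi n-\frac13 I$ with $n\in H^1_{loc}(\widetilde\Omega)$.

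Next I would fix the orientation and identify the traces of $n$. On the sphere arc $\widetilde Q_\star=e_r\otimes e_r-\frac13 I$, hence $n=\pm e_r$, with constant sign by continuity along the connected arc $\{\rho^2+z^2=1\}\setminus\{(1,0)\}$; replacing $n$ by $-n$ if necessary, $n=e_r$ there. The segment $\{z=0,\ \rho>1\}$ lies in the interior of $\widetilde\Omega$, and on it \eqref{eq:mirrorsym} gives $\widetilde Q_\star=S\widetilde Q_\star S^t$, so $n(\rho,0)$ belongs to $\{v\in\mathbb S^2:Sv=\pm v\}=\{v_3=0\}\cup\{\pm e_3\}$, which has three connected components; by continuity $n(\rho,0)$ remains in one of them for all $\rho>1$. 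The component $\{v_3=0\}$ is excluded using the far field: since $Q_\star\in\mathcal H^\star_{sym}$ and is smooth near infinity, $Q_\star(x)\to Q_\infty$ as $\abs x\to\infty$, so $n(\rho,0)\to\pm e_3$ as $\rho\to\infty$. Hence $n(\rho,0)=\tau e_3$ for a fixed $\tau\in\{\pm1\}$ — exactly the condition $n\otimes n=e_3\otimes e_3$ on $\{z=0\}$ in the definition of $\hat{\mathcal H}$, which together with $n=e_r$ on the sphere arc gives \eqref{eq:noriented}. For the mirror symmetry of $n$: \eqref{eq:mirrorsym} forces $n(\rho,-z)=\pm S n(\rho,z)$ pointwise (both sides are $\mathbb S^2$-lifts of $\widetilde Q_\star(\rho,-z)$), with the sign $\tau_0$ globally constant by connectedness of $\widetilde\Omega$; evaluating at $z=0$ gives $\tau e_3=\tau_0 S(\tau e_3)=-\tau_0\tau e_3$, so $\tau_0=-1$ and $n(\rho,-z)=-Sn(\rho,z)$.

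It remains to record membership in $\hat{\mathcal H}$ and minimality. By the uniaxial identities $\abs{\nabla\widetilde Q_\star}^2=2\abs{\nabla n}^2$ and $\Xi[\widetilde Q_\star]=2(n_1^2+n_2^2)$, the quantity $\hat E(n;D^{ext}_\delta)$ is a fixed positive multiple of $E_\star(Q_\star;\Omega^{ext}_\delta)$, hence finite, and the boundary conditions just established place $n|_{\{z>0\}}$ in $\hat{\mathcal H}(D^{ext}_\delta)$ for every $\delta$. Given any $m\in\hat{\mathcal H}(D^{ext}_\delta)$ with $m\otimes m=n\otimes n$ on $\partial D^{ext}_\delta\cap D$, the map $M:=R_\varphi m\otimes R_\varphi m-\frac13 I$ lies in $\mathcal H^\star_{sym}(\Omega^{ext}_\delta)$, equals $Q_\star$ on $T_\delta$, and $E_\star(M;\Omega^{ext}_\delta)$ is the same fixed multiple of $\hat E(m;D^{ext}_\delta)$; minimality of $Q_\star$ from Lemma~\ref{l:H1conv} then forces $\hat E(n;D^{ext}_\delta)\le\hat E(m;D^{ext}_\delta)$. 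As $\delta>0$ is arbitrary, $n$ minimizes $\hat E(\cdot;D^{ext}_\delta)$ for all $\delta$.

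I expect the main obstacle to be the first step: writing $Q_\star$ in uniaxial form requires knowing that the $\mathbb{RP}^2$-valued map $\widetilde Q_\star$ genuinely lifts on $\widetilde\Omega$, which in turn needs continuity of $\widetilde Q_\star$ away from $\mathcal C$. Getting this forces one to exploit the equivalence between two–dimensional competitors and equivariant three–dimensional ones in order to convert the (merely) symmetric minimality of $Q_\star$ into honest two–dimensional minimizing–harmonic–map regularity; once continuity of $\widetilde Q_\star$ on the simply connected domain $\widetilde\Omega$ is in hand, the lift exists automatically and everything else is bookkeeping of signs together with the far–field condition.
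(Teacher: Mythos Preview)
Your route is sound and close to complete, but it differs from the paper's in two respects worth noting.

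First, the paper does not pass through two-dimensional regularity to obtain the lift. Instead it lifts $Q_\star$ directly on the simply connected three-dimensional domain $\Omega^{ext}_\delta$ using the $H^1$ lifting theorem for $\mathbb{RP}^2$-valued maps (Bethuel--Chiron, Ball--Zarnescu), obtaining $n_\star\in H^1_{loc}(\Omega^{ext}_\delta;\mathbb S^2)$ with no continuity assumption, and then sets $n(\rho,z)=n_\star(\rho,0,z)$. This sidesteps your appeal to minimizing-harmonic-map regularity in 2D (which is correct, but must be argued with some care at points on $\{z=0\}$, where competitors are constrained by the mirror symmetry). With the $H^1$ lift in hand, constancy of the sign of $n\cdot e_r$ on the sphere arc and of $n_3$ on $\{z=0,\rho>1\}$ follows not from continuity but from the trace being an $H^{1/2}$ function taking only discrete values.

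Second---and this is the one step in your argument that is asserted rather than proved---you exclude $n(\cdot,0)\in\{v_3=0\}$ by claiming $Q_\star(x)\to Q_\infty$ as $|x|\to\infty$. Membership in $\mathcal H^\star_{sym}$ together with interior smoothness does not by itself yield pointwise convergence at infinity; you would need an additional argument (e.g.\ removable singularity for 2D minimizing harmonic maps after conformally inverting the exterior to a punctured disk, or an energy-decay/$\varepsilon$-regularity argument). The paper avoids this entirely: assuming $m_3(\rho,0)=0$, it writes $m_1^2+m_2^2=1-m_3^2\ge 1-|z|\int_0^{|z|}|\partial_z m|^2$ and integrates in $\rho$ to show directly that finiteness of $\hat E$ would force $\int_2^\infty\rho^{-1}\,d\rho<\infty$, a contradiction. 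Either fix works, but you should supply one.
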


\begin{proof}[Proof of Lemma~\ref{l:n}]
Since $\Omega^{ext}_\delta$ is simply connected, $Q_\star$ can be lifted \cite{bethuelchiron07,ballzarnescu11}: there exists a map $n_\star\in H^1_{loc}(\Omega;\mathbb S^2)$ such that
\begin{equation*}
Q_\star =n_\star\otimes n_\star-\frac 13 I.
\end{equation*}
As $\abs{\nabla Q_\star}^2=2\abs{\nabla n_\star}^2$, the symmetry of $Q_\star$ implies that $n(\rho,z)= n_\star(\rho,0,z)$ belongs to $H^1_{loc}(\widetilde\Omega^{ext}_\delta)$, and we have
\begin{align*}
\frac 1{8\pi}E(Q_\star;\Omega^{ext}_\delta)& = \hat E(n;D^{ext}_\delta).
\end{align*}
Since $Q_\star$ is minimizing in $\mathcal H_{sym}^\star(\Omega^{ext}_\delta)$, we deduce that
 $n$ minimizes $\hat E(\cdot;D^{ext}_\delta)$ among maps $m\colon D^{ext}_\delta\to\mathbb S^2$ satisfying the boundary conditions
\begin{align*}
&m\otimes m = e_r\otimes e_r\quad\text{ on }\partial D^{ext}_\delta\cap\lbrace\rho^2+z^2 =1\rbrace,\\
&m\otimes m = n\otimes n\quad\text{ on }\partial D^{ext}_\delta\cap D,\\
&m\otimes m = (Sm)\otimes (Sm)\quad\text{ on }\partial D^{ext}_\delta\cap\lbrace z=0\rbrace.
\end{align*}
Note that $\abs{m\otimes m-e_3\otimes e_3}^2=2(m_1^2+m_2^2)$, so that the far-field condition which requires $\int\abs{Q-Q_\infty}^2\abs{x}^{-2}<\infty$ is obsolete here: any map $m$ with finite energy satisfies
\begin{equation*}
\int_{D^{ext}_\delta}\frac{m_1^2+m_2^2}{\rho^2+z^2}\rho\, d\rho\, dz\leq \hat E(m;D^{ext}_\delta) <\infty.
\end{equation*}
The boundary condition on $\lbrace z=0\rbrace$ comes from the requirement that $m\otimes m$ can be extended to an $H^1$ map in $\widetilde \Omega^{ext}_\delta$ via the mirror symmetry. It is equivalent to $m_3(\rho,0)\in\lbrace 0,\pm 1\rbrace$ for almost all $\rho>1+\delta$. Since the trace $\rho\mapsto m_3(\rho,0)$ has $H^{1/2}_{loc}$ regularity, being integer valued it has to be constant: there exists $\tau\in\lbrace 0,\pm 1\rbrace$ such that $m_3(\rho,0)=\tau$ for almost all $\rho>1+\delta$. One can rule out $\tau=0$. Indeed, assume by contradiction that $\tau=0$, i.e. $m_3(\rho,0)=0$. Then we have
\begin{align*}
m_1^2+m_2^2 &= 1-m_3^2 = 1 -\left(\int_0^z\partial_z m_3\right)^2 \\
&\geq 1-\abs{z}\int_0^z\abs{\partial_z m}^2,
\end{align*}
and therefore, for almost all $z>0$,
\begin{align*}
+\infty =\int_2^\infty \frac{d\rho}{\rho} & \leq \int_2^\infty \frac{m_1^2+m_2^2}{\rho}\,d\rho
 + \int_2^\infty \frac{\abs{z}}{\rho^2}\int_0^\infty \abs{\partial_z m}^2\,dz\,\rho\,d\rho\\
&\leq \int_2^\infty \frac{m_1^2+m_2^2}{\rho}d\rho + \abs{z}\int_{D_1}\abs{\nabla m}^2\rho\,d\rho\,dz.
\end{align*}
This clearly contradicts the finiteness of $\int(m_1^2+m_2^2)/\rho\, d\rho dz$. We deduce that $\tau =\pm 1$, that is,
\begin{equation*}
m\otimes m =e_3\otimes e_3\quad\text{for }z=0,
\end{equation*}
so that $m\in \hat{\mathcal H}(D^{ext}_\delta)$ and, as a consequence, $n$ minimizes $\hat E(\cdot;D^{ext}_\delta)$ in $\hat{\mathcal H}(D^{ext}_\delta)$ for all $\delta>0$ with respect to its own boundary conditions on $\partial D^{ext}_\delta\cap D$.

Moreover, the boundary conditions on $\partial D^{ext}_\delta\cap\lbrace \rho^2+z^2=1\rbrace$ require that the $H^{1/2}$ trace $n\cdot e_r$ take values into $\lbrace \pm 1\rbrace$ and thus be constant. Then, up to changing $n$ to $-n$ and $\tau$ to $-\tau$, one obtains the boundary conditions \eqref{eq:noriented}.

It remains to show that $n(\rho,-z)=-Sn(\rho,z)$. The mirror symmetry implies that $n(\rho,-z)=\pm S n(\rho,z)$, and therefore the $H^1$ function $n(\rho,-z)\cdot Sn(\rho,z)$ takes values into $\lbrace \pm 1\rbrace$ and must be constant. By the above, its trace on $\lbrace z=0\rbrace$ is equal to $\tau^2 e_3 \cdot (-e_3)=-1$. We conclude that $n(\rho,-z)=-Sn(\rho,z)$.
\end{proof}

Next we turn to proving the additional symmetry property $n_2\equiv 0$. The idea is that the use of an appropriate comparison map as in \cite{sandier_shafrir_93} implies that symmetry provided it is satisfied at the boundary. But we can only use energy comparison in $D^{ext}_\delta$, and then we actually do not know that $n_2=0$ on the whole boundary, due to the undetermined part $\partial D^{ext}_\delta \cap D$. This will make the proof quite technical. 

To gather more information about the behavior of $n$ on $\partial D^{ext}_\delta\cap D$, it is natural to use  polar coordinates $(r,\theta)$ around $(\rho=1,z=0)$, so that $\partial D^{ext}_\delta\cap D$ corresponds to fixing $r=\delta$. In those coordinates, the domain $D$ is given by $0<\theta<\theta_0(r)$, where $\theta_0(r)$ is defined in \eqref{eq:Ddelta}.
%\begin{equation*}
%\theta_0(r)=\begin{cases}
%\pi/2+\arcsin(r/2) &\quad\text{for }0<r<\sqrt 2,\\
%\pi/2 + \arcsin(1/r)&\quad\text{for }r\geq \sqrt 2.
%\end{cases}
%\end{equation*}
The upper and lower bound, together with strong $H^1$ convergence, provide the estimate
\begin{equation*}
\frac 1{2\pi} E(Q_\star;\Omega\setminus\mathcal N_\delta(\mathcal C))\leq 2\pi\ln\frac 1\delta +C,
\end{equation*}
which for $n(\rho,z)$ translates into
\begin{equation*}
\hat E(n;D^{ext}_\delta)\leq\frac \pi 2\ln\frac 1\delta + C.
\end{equation*}
In coordinates $(r,\theta)$ this implies 
\begin{equation}\label{eq:energybound_n}
\int_\delta^1  \left[\int_0^{\theta_0(r)} \abs{\partial_\theta n}^2d\theta -\frac \pi 2 \right]\frac{dr}{r} + \int_{\delta}^1 \int_0^{\theta_0(r)}\abs{\partial_r n}^2 d\theta\,r\, dr \leq C.
\end{equation}
For $r<\sqrt 2$, the boundary conditions \eqref{eq:noriented} become
\begin{equation}\label{eq:theta1}
  n(r,\theta_0(r))=\cos\theta_1e_1 + \sin\theta_1e_3, \quad\text{where} \quad \theta_1=2\theta_0-\pi,
  \end{equation}
and $n(r,0)=\tau e_3$. Remarking (as in Lemma~\ref{l:finelowerboundhalfannulus}) that we also have the lower bound
\begin{align*}
\int_0^{\theta_0}\abs{\partial_\theta n}^2 d\theta &
\geq\frac{1}{\theta_0}
\left(\int_0^{\theta_0}\abs{\partial_\theta n} d\theta\right)^2 \\
&\geq \frac{1}{\theta_0}
[\dist_{\mathbb S^2}(n(r,0),n(r,\theta_0))]^2 \\
&=\frac{(\tau\pi/2-\theta_1)^2}{\theta_0}= \frac\pi 2 + O(r),
\end{align*}
from  \eqref{eq:energybound_n} we deduce
\begin{equation}\label{eq:bound_dn}
\int_0^1
\left|{\int_0^{\theta_0(r)}\abs{\partial_\theta n}^2d\theta-\frac\pi 2}\right|
\frac{dr}{r}
+ \int_0^1\int_0^{\theta_0(r)}\abs{\partial_r n}^2 d\theta\, r \, dr <\infty.
\end{equation}
We are now ready to prove:

\begin{proposition}\label{p:addsym}
We have $n_2\equiv 0$ in $D$.
\end{proposition}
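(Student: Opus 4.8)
\emph{Strategy.} My plan is to exploit the fact that, for the reduced energy $\hat E$, azimuthal phase rotations of $n$ can only \emph{decrease} the energy, so that a minimizer cannot carry a non‑trivial azimuthal phase; the anchoring $n=e_r$ on the particle surface then pins that (necessarily constant) phase to zero. Concretely, on the open set $\{n_3^2<1\}$ I would write $(n_1,n_2)=\sqrt{1-n_3^2}\,(\cos\gamma,\sin\gamma)$ with $\gamma\in\R/2\pi\mathbb Z$ the azimuthal phase, and introduce the ``phase‑erasing'' competitor
\[
 n^\sharp:=\bigl(\sqrt{1-n_3^2}\,,\ 0\,,\ n_3\bigr)\colon D\longrightarrow\mathbb S^2 .
\]
Since $(n^\sharp_1)^2+(n^\sharp_2)^2=1-n_3^2=n_1^2+n_2^2$, the potential density is unchanged, while the elementary bound $|\nabla|u||\le|\nabla u|$ applied to $u=(n_1,n_2)$ gives the pointwise identity $|\nabla n|^2-|\nabla n^\sharp|^2=(1-n_3^2)|\nabla\gamma|^2\ge0$. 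One checks directly that $n^\sharp=e_r$ on $\{\rho^2+z^2=1\}$ and $n^\sharp=\tau e_3$ on $\{z=0\}$ (using $\rho>0$ on $D$), so $n^\sharp\in\hat{\mathcal H}(D^{ext}_\delta)$ with $\hat E(n^\sharp;D^{ext}_\delta)\le\hat E(n;D^{ext}_\delta)$. If $n^\sharp$ had the same trace as $n$ on the free boundary $\partial D^{ext}_\delta\cap D=\{r=\delta\}$, minimality of $n$ would immediately give $\int_{D^{ext}_\delta}(1-n_3^2)|\nabla\gamma|^2\rho=0$; the only obstruction is that $n_2$ need not vanish on $\{r=\delta\}$.

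\emph{Crossing the free boundary.} This is where the sharp asymptotics near the ring enter. Using \eqref{eq:bound_dn} I would pick $\delta_k\downarrow0$ along which $\int_0^{\theta_0(\delta_k)}|\partial_\theta n(\delta_k,\cdot)|^2\,d\theta\to\pi/2$; together with the boundary values \eqref{eq:theta1} and the geodesic‑stability argument used in Lemma~\ref{l:finelowerboundhalfannulus}, this forces $n(\delta_k,\cdot)$ to converge in $C^0$ to the constant‑speed minimizing geodesic of $\mathbb S^2$ joining $\tau e_3$ to $\cos\theta_1 e_1+\sin\theta_1 e_3$. As both endpoints lie on the \emph{totally geodesic} great circle $\Gamma:=\mathbb S^2\cap\{x_2=0\}$, that geodesic lies in $\Gamma$; hence $n(\delta_k,\cdot)$ stays uniformly away from $\pm e_2$, $\|\gamma(\delta_k,\cdot)\|_{L^\infty}\to0$ and $\int_0^{\theta_0(\delta_k)} n_2(\delta_k,\cdot)^2\,d\theta\to0$, while $\int_0^{\theta_0(\delta_k)}(1-n_3^2)|\partial_\theta\gamma|^2\,d\theta\le\int_0^{\theta_0(\delta_k)}|\partial_\theta n(\delta_k,\cdot)|^2$ stays bounded. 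I would then build $m_k\in\hat{\mathcal H}(D^{ext}_{\delta_k})$ by setting $m_k=n^\sharp$ on $\{r>\delta_k(1+\eta_k)\}$ and, on a thin collar $\{\delta_k<r<\delta_k(1+\eta_k)\}$, keeping $n_3$ and $\sqrt{1-n_3^2}$ at their actual values but damping the phase from $\gamma(\delta_k,\cdot)$ at $r=\delta_k$ to $0$ at $r=\delta_k(1+\eta_k)$ by a cut‑off $\chi(r)$. Then $m_k=n$ \emph{exactly} on $\{r=\delta_k\}$, so $m_k$ is admissible, and the only energy change relative to $n$ occurs in the collar, where it is at most $\lesssim\eta_k^{-1}\int_0^{\theta_0}(1-n_3^2)\gamma^2|_{\delta_k}+\eta_k\int_0^{\theta_0}(1-n_3^2)|\partial_\theta\gamma|^2|_{\delta_k}$; optimizing in $\eta_k$ this is $o(1)$ by the above. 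Minimality of $n$ on $D^{ext}_{\delta_k}$ then gives $\int_{\{r>\delta_k(1+\eta_k)\}}(1-n_3^2)|\nabla\gamma|^2\rho\le o(1)$, and letting $k\to\infty$ one gets $\int_D(1-n_3^2)|\nabla\gamma|^2\rho=0$.

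\emph{Conclusion by unique continuation.} Thus $\nabla\gamma\equiv0$ a.e.\ on the open set $\{n_3^2<1\}$, i.e.\ $\gamma$ is locally constant there. On the arc $\{\rho^2+z^2=1\}\cap D$ one has $n=e_r=(\rho,0,z)$ with $\rho>0$, so $\gamma=0$; hence $\gamma\equiv0$, and therefore $n_2\equiv0$, on the non‑empty open connected component $V_0$ of $\{n_3^2<1\}$ adjacent to this arc. On the other hand, any compactly supported perturbation of $n$ is admissible for the minimization on $D^{ext}_\delta$ with $\delta$ small, so $n$ is a local minimizer of $\hat E$ in $D$; since $D\subset\{\rho>0\}$, the weight $\rho$ and the potential $\rho^{-2}(n_1^2+n_2^2)$ are smooth, and interior regularity for two‑dimensional minimizing harmonic maps gives $n\in C^\infty(D;\mathbb S^2)$ (the only singularity of the configuration, the ring defect, sits at the corner $(1,0)\in\partial D$). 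Consequently $n_2$ solves in $D$ the linear elliptic equation extracted from the Euler--Lagrange system,
\[
 -\Delta n_2-\tfrac1\rho\,\partial_\rho n_2=\Bigl(|\nabla n|^2-\tfrac{n_3^2}{\rho^2}\Bigr)\,n_2 ,
\]
with smooth coefficients on the connected domain $D$; as $n_2$ vanishes on the non‑empty open set $V_0$, weak unique continuation forces $n_2\equiv0$ on all of $D$.

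\emph{Where the difficulty lies.} The first and last steps are soft. The genuine work is patching the phase‑erased competitor onto $n$ across the circle $\{r=\delta\}$ surrounding the ring defect, where the Dirichlet data is free and the energy blows up. What makes this possible is exactly the rigidity imposed by the sharp energy asymptotics (\eqref{eq:bound_dn} and the mechanism of Lemma~\ref{l:finelowerboundhalfannulus}): near the defect $n$ is constrained to be an almost‑geodesic, hence almost $\Gamma$‑valued, so the azimuthal phase it carries across $\{r=\delta_k\}$ is asymptotically negligible and the gluing costs $o(1)$. The proximity of $\{r=\delta\}$ to the blow‑up region is both the source of the problem and, through the energy asymptotics, the source of its resolution.
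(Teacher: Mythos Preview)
Your overall plan coincides with the paper's: the competitor $n^\sharp=(\sqrt{n_1^2+n_2^2},0,n_3)$ is exactly the paper's $\tilde n$, the real work is patching across the free arc $\{r=\delta\}$, and once $n_1\nabla n_2-n_2\nabla n_1\equiv 0$ one concludes $n_2\equiv 0$. Your closing step via the linear equation for $n_2$ and unique continuation is a legitimate (and arguably cleaner) alternative to the paper's direct argument integrating $m\wedge\dot m=0$ along rays from the particle.

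The gap is in the collar. On the arc $\{r=\delta_k\}$ the curve $n(\delta_k,\cdot)$ starts at $\tau e_3$ (the boundary condition at $\theta=0$), so it sits \emph{at the pole} of the azimuthal phase. $C^0$-closeness to a great-circle geodesic in $\Gamma=\{x_2=0\}$ does \emph{not} imply $\|\gamma(\delta_k,\cdot)\|_{L^\infty}\to 0$: near $\theta=0$ one has $n_1\to 0$ as well as $n_2\to 0$, and an arbitrarily small perturbation can make $\gamma$ close to $\pm\pi$. Consequently neither $\int_0^{\theta_0}\gamma^2|_{\delta_k}$ nor $\int_0^{\theta_0}|\partial_\theta\gamma|^2|_{\delta_k}$ is controlled (only the weighted quantities $(1-n_3^2)\gamma^2\simeq n_2^2$ and $(1-n_3^2)|\partial_\theta\gamma|^2$ are), and your collar estimate collapses because in your construction the factor $(1-n_3^2)$ is evaluated at $(r,\theta)$ while $\gamma$ is frozen at $(\delta_k,\theta)$, so the pole cancellation is lost. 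There is also a minor domain issue: for $r>\delta_k$ you need $\gamma(\delta_k,\theta_0(r))$ with $\theta_0(r)>\theta_0(\delta_k)$, outside the arc's range.

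The paper circumvents the pole entirely by \emph{not} using the phase. It interpolates via
\[
\bar n=\bigl(\sqrt{n_1^2+(1-\lambda^2)n_2^2},\ \lambda n_2,\ n_3\bigr),\qquad \lambda=\lambda(r),
\]
over a \emph{logarithmic}-width layer $2\delta\le r\le\sqrt{2\delta}$; then $|\partial_\theta\bar n|\le|\partial_\theta n|$ pointwise and the only excess comes from $(\lambda')^2$, which the logarithmic cutoff makes $o(1)$ together with the finiteness of $\int|\partial_r n|^2 r\,dr\,d\theta$ from \eqref{eq:bound_dn}. This lands at $\bar n=(|n_1|,n_2,n_3)$, not $n$, so a second thin layer $\delta\le r\le 2\delta$ is needed to flip the sign of $n_1$; the paper controls that layer by showing that the last $\theta$ where $n_1(2\delta,\cdot)=0$ satisfies $\gamma(2\delta)\lesssim(\ln\frac1\delta)^{-1}$ along a well-chosen sequence (inequalities \eqref{eq:etacontrolsgamma}--\eqref{eq:oscsmall}). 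This two-step construction is precisely what absorbs the pole difficulty that breaks your single-collar phase damping.
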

\begin{proof}[Proof of Proposition~\ref{p:addsym}]
The starting idea is to use as a comparison map
\begin{equation*}
\tilde n = \left(\sqrt{n_1^2+n_2^2},0,n_3\right),
\end{equation*}
which has lower energy than $n$ since $n_1^2+n_2^2=\tilde n_1^2+ \tilde n_2^2$ and
\begin{equation*}
\abs{\partial_j n}^2-\abs{\partial_j \tilde n}^2 = -\frac{(n_1\partial_j n_2-n_2\partial_j n_1)^2}{n_1^2 + n_2^2},
\end{equation*}
 and to conclude that $n_2=0$. One just needs to take care of technical difficulties that arise due to the undetermined part of the boundary $\partial D^{ext}_\delta\cap D$: there, one does not know that $n_2=0$ and $n_1\geq 0$, hence $\tilde n$ cannot be used directly as a comparison map. One needs to introduce a transition layer, and to ensure that this transition layer's excess energy is negligible as $\delta\to 0$. We will define a good comparison map $\bar n$ in $D^{ext}_\delta$ by setting
\begin{equation*}
\bar n = \tilde n\qquad\text{in }D^{ext}_{\sqrt{2\delta}},
\end{equation*}
and by constructing an appropriate transition layer in $D^{ext}_\delta\setminus D^{ext}_{\sqrt{2\delta}}$. 
 We first set
\begin{align*}
\bar n(r,\theta)&=
\left(\begin{array}{c}
\sqrt{n_1^2 + (1-\lambda^2)n_2^2}\\
\lambda n_2\\
n_3
\end{array}
\right),\qquad
\lambda =\lambda(r) = 1- \frac{2}{\ln(2\delta)}\ln\frac{2\delta}r,
\end{align*}
in the region $D^{ext}_{2\delta}\setminus D^{ext}_{\sqrt{2\delta}}$ corresponding  to $2\delta\leq r\leq \sqrt{2\delta}$. On the boundary of $D^{ext}_{2\delta}\setminus D^{ext}_{\sqrt{2\delta}}$ this map satisfies
\begin{align*}
\bar n &=\tilde n\quad\text{for }r=\sqrt{2\delta},&
\bar n & = (\abs{n_1},n_2,n_3)\quad\text{for }r=2\delta,\\
\bar n & = \tau e_3 = n \quad\text{for }\theta=0,&
\bar n & = n \quad\text{for }\theta=\theta_0(r).
\end{align*}
For $2\delta<r<\sqrt{2\delta}$ we have
\begin{align*}
\abs{\partial_\theta \bar n}^2-\abs{\partial_\theta  n}^2 & =-(1-\lambda^2)\frac{(n_1\partial_\theta n_2-n_2\partial_\theta n_1)^2}{n_1^2+(1-\lambda^2)n_2^2}\leq 0,\\
\abs{\partial_r \bar n}^2 &\leq 2(\lambda'(r))^2+2\abs{\partial_rn}^2
\leq \frac{4}{\ln^2(2\delta)}\frac{1}{r^2} + 2\abs{\partial_r n}^2,\\
\bar n_1^2 + \bar n_2^2 & = n_1^2+n_2^2,
\end{align*}
and therefore the excess energy satisfies
\begin{align*}
& \hat E(\bar n;D^{ext}_{2\delta}\setminus D^{ext}_{\sqrt{2\delta}})-\hat E(n;D^{ext}_{2\delta}\setminus D^{ext}_{\sqrt{2\delta}})\\
&\leq \frac{4\pi}{\ln^2(2\delta)}\int_{2\delta}^{\sqrt{2\delta}}\frac{dr}{r} + 2\int_{2\delta}^{\sqrt{2\delta}}\int_0^{\theta_0(r)}\abs{\partial_r n}^2d\theta\, r\, dr \\
&\leq \frac{2\pi}{\abs{\ln(2\delta)}} + 2\int_{r\leq\sqrt{2\delta}}\abs{\partial_r n}^2 d\theta\, r\, dr.
\end{align*}
Since $\abs{\partial_r n}^2$ is integrable \eqref{eq:bound_dn}, this quantity is negligible as $\delta\to 0$.

 It remains to define $\bar n$ in $D^{ext}_{\delta}\setminus D^{ext}_{2\delta}$, that is, to build a transition layer between $n$ at $r=\delta$ and $\bar n=(\abs{n_1},n_2,n_3)$ at $r=2\delta$.
To this end, recalling that $n_1(r,0)=0$ and that $\theta\mapsto n(r,\theta)$ is continuous for almost all $r>0$, we define
\begin{equation*}
\gamma(r)=\sup \left\lbrace \theta\in [0,\theta_0(r))\colon n_1(r,\theta)=0\right\rbrace,
\end{equation*}
so that $\bar n(r,\theta)=n(r,\theta)$ for $\gamma(r)\leq\theta\leq\theta_0(r)$. Since $n_1(r,\gamma(r))=0$, we have
\begin{align*}
\int_{\gamma(r)}^{\theta_0(r)}\abs{\partial_\theta n}^2 d\theta &\geq
\frac{1}{\theta_0(r)-\gamma(r)}[\dist_{\mathbb S^2}(n(r,\gamma),n(r,\theta_0)]^2\\
&= \frac{(\pi/2+O(r))^2}{\theta_0(r)-\gamma(r)}=
\frac\pi 2 + \frac{\gamma(r)+O(r)}{\theta_0(r)-\gamma(r)},
\end{align*}
and therefore,
\begin{align}\label{eq:etacontrolsgamma}
\eta(r):=\left|{\int_0^{\theta_0(r)}\abs{\partial_\theta n}^2d\theta -\frac \pi 2}\right|\geq  \frac{\gamma(r)-Cr}{\theta_0(r)-\gamma(r)} + \int_0^{\gamma(r)}\abs{\partial_\theta n}^2 d\theta.
\end{align}
Recall \eqref{eq:bound_dn} that $\eta$ is integrable on $(0,1)$ with respect to the measure $dr/r$, hence
\begin{align*}
\int_0^1\eta(r)\frac{dr}r & =\sum_{n=0}^\infty \int_{2^{-n-1}}^{2^{-n}}\eta(r)\frac{dr}r  = \sum_{n=0}^\infty \int_{1/2}^1 \eta(2^{-n}r)\frac{dr}{r} <\infty.
\end{align*}
The sequence $v_n=\int_{1/2}^1 \eta(2^{-n})dr/r$ thus being summable, there exists a subsequence $(n_k)$ such that
\begin{equation*}
\int_{1/2}^1 \eta(2^{-n_k}r)\frac{dr}{r}\leq \frac{1}{n_k}.
\end{equation*}
Moreover there exists $r_k\in (1/2,1)$ such that
\begin{equation*}
\eta(2^{-n_k}r_k)\leq C \int_{1/2}^1 \eta(2^{-n_k}r)\frac{dr}{r}\leq \frac{1}{n_k}.
\end{equation*}
We set $\delta=\delta_k=2^{-n_k-1}r_k$, so that $\delta_k\to 0$ and we have
\begin{equation}\label{eq:etadeltasmall}
\eta(2\delta)\leq C \left(\ln\frac 1\delta\right)^{-1}.
\end{equation}
Thanks to \eqref{eq:etacontrolsgamma} and abbreviating $\gamma=\gamma(2\delta)$, we have
\begin{equation}\label{eq:gammasmall}
\gamma+\int_0^{\gamma}\abs{\partial_\theta n}^2(2\delta,\theta)\,d\theta \leq C \left(\ln\frac 1\delta\right)^{-1}.
\end{equation}
It will also be useful to remark that
\begin{equation}\label{eq:oscsmall}
\max_{\theta\in[0,\gamma]}\abs{n(2\delta,\theta)-\tau e_3}^2\leq \gamma \int_0^{\gamma}\abs{\partial_\theta n}^2(2\delta,\theta)\,d\theta \leq C\left(\ln\frac 1\delta\right)^{-2}.
\end{equation}
Next we define $\bar n$  in $D^{ext}_{2\delta}\setminus D^{ext}_\delta$. In polar coordinates $(r,\theta)$, this corresponds to the region
\begin{align*}
R_\delta &= \left \lbrace (r,\theta)\colon \delta<r\leq 2\delta,\,0<\theta<\theta_0(r)\right\rbrace=R_1\sqcup R_2 \sqcup R_3,\\
R_1&=\lbrace \delta < r \leq \sigma(\theta)\rbrace,\\
\sigma(\theta)& = \begin{cases}
(2-\gamma^2)\delta &\quad\text{for }0<\theta\leq\gamma,\\
2\delta - \frac{\gamma^2\delta}{\theta_0(2\delta)-\gamma}(\theta_0(2\delta)-\theta)&\quad\text{for }\gamma<\theta<\theta_0(2\delta),
\end{cases}
\\
R_2&=\lbrace \sigma(\theta)<r\leq 2\delta,\,\gamma<\theta<\theta_0(2\delta)\rbrace,\\
R_3&=\lbrace \sigma(\theta)<r\leq 2\delta,\,0<\theta\leq\gamma(2\delta)\rbrace.
\end{align*}

\begin{center}
\begin{tikzpicture}[scale=1.2]

\draw[thick] (-.1,0) node [left] {$\delta$} -- (6,0);
\draw[thick] (0,-.1) node [below] {$0$} -- (0,3.1);
\draw[thick] (-.1,3) node [left] {$2\delta$} -- (6.5,3);
\draw[thick] (-.1,2) node [left] {$(2-\gamma^2)\delta$} -- (1,2) -- (6.5,3);
\draw[thick] (1,2) -- (1,3.1) node [above] {$\gamma$};
\draw[thick] (5.97,-.1) node [below] {$\theta_0(\delta)$} .. controls ++(.1,1) and ++(-.25,-1).. (6.53,3.1) node [above] {$\theta_0(2\delta)$};

\draw (3,1.5) node {$R_1$};
\draw (.5,2.5) node {$R_3$};
\draw (1.5,2.5) node {$R_2$};

\draw[thick,->] (-2,.5)--(-1.5,.5) node [below] {$\theta$};
\draw[thick,->] (-2,.5)--(-2,1) node [left] {$r$};
\end{tikzpicture}
\end{center}

In $D^{ext}_{2\delta}\setminus D^{ext}_\delta=R_\delta$ we thus define $\bar n$ by
\begin{align*}
\bar n(r,\theta)&=
\begin{cases}
n(\delta+\frac{\delta}{\sigma(\theta)-\delta}(r-\delta),\theta)&\quad\text{in }R_1,\\
n(2\delta,\theta)&\quad\text{in }R_2,\\
\pi_{\mathbb S^2}(n(2\delta,\theta)+\lambda(r)[\bar n(2\delta,\theta)-n(2\delta,\theta)])&\quad\text{in }R_3,
\end{cases}\\
\lambda(r)&=1-\left(\ln\frac 2{2-\delta^2}\right)^{-1}\ln\frac{2\delta}r.
\end{align*}
Here $\pi_{\mathbb S^2}$ is the nearest point projection onto $\mathbb S^2$, which is well-defined and uniformly Lipschitz because \eqref{eq:oscsmall} ensures
\begin{equation*}
\max_{[0,\gamma(2\delta)]}\abs{\bar n(2\delta,\cdot)-\tau e_3}= \max_{[0,\gamma(2\delta)]}\abs{n(2\delta,\cdot)-\tau e_3}\leq C\left(\ln\frac 1\delta\right)^{-2}.
\end{equation*}
At the boundary of $R_\delta$, this definition ensures that $\bar n = n$ on $\lbrace r=\delta\rbrace$, $\lbrace\theta=0\rbrace$ and $\lbrace \theta=\theta_0(r)\rbrace$, and that $\bar n$ does not jump across $\lbrace r=2\delta\rbrace$. Moreover we have
\begin{align*}
\hat E(\bar n; R_\delta)-\hat E(n;R_\delta)&
\leq \hat E(\bar n;R_1)-\hat E(n;R_\delta) \\
&\quad + \hat E(\bar n;R_2) + \hat E(\bar n; R_3).
\end{align*}
Direct computations and \eqref{eq:etadeltasmall}-\eqref{eq:gammasmall} show that
\begin{align*}
\hat E(\bar n ;R_1)-\hat E(n;R_\delta) &\leq C \gamma^2 \hat E(n;R_\delta) \leq C \left(\ln\frac 1\delta\right)^{-1},\\
\hat E(\bar n ;R_3)&\leq  C(\gamma^2+\eta(2\delta))\leq C \left(\ln\frac 1\delta\right)^{-1}, \\
\hat E(\bar n;R_2)&\leq C \eta(2\delta)\leq C \left(\ln\frac 1\delta\right)^{-1},
\end{align*}
so that in $D^{ext}_{\delta}\setminus D^{ext}_{2\delta}$ the energy excess of our comparison map is also negligible. From the minimizing property of $n$ we thus obtain
\begin{align*}
0 &\leq \hat E(\bar n;D^{ext}_\delta)-\hat E(n;D^{ext}_\delta)\\
&\leq \int_{D^{ext}_{\sqrt{2\delta}}}\left(\abs{\nabla \tilde n}^2-\abs{\nabla n}^2\right)\rho\,d\rho\,dz + o(1)\quad\text{as }\delta\to 0.
\end{align*}
Since  $\abs{\nabla\tilde n}\leq\abs{\nabla n}$ in $D$, we deduce that we have in fact $\abs{\nabla \tilde n}=\abs{\nabla n}$ in $D$, which implies
\begin{equation*}
n_1\nabla n_2 - n_2\nabla n_1 =0\quad\text{in }D.
\end{equation*}
In particular, for any fixed $\rho,z>0$ with $\rho^2+z^2=1$, the map $m\colon t\mapsto (n_1,n_2)(t\rho,tz)$ satisfies $m \wedge \dot m =0$, and thus $\dot m= \alpha m$, where  $\alpha=(m\cdot\dot m) / \abs{m}^2$ is continuous on $[1,\infty)$ because $\abs{m}^2$ does not vanish there. Since $m_2(1)=0$ this implies that $m_2(t)=0$ for all $t\geq 1$, hence $n_2\equiv 0$ in $D$.
\end{proof}

Thanks to Proposition~\ref{p:addsym}, we can apply the regularity results on symmetric harmonic maps in \cite{HKL90} to deduce that $n_\star$ is analytic in $\overline\Omega\setminus( \mathcal C \cup Z)$, where $Z\subset\Omega\cap\lbrace\rho=0\rbrace$ is a discrete set of singular points on the vertical axis. By Lemma~\ref{l:H1conv} we therefore have $Q_\xi\to Q_\star$ in $C_{loc}^{1,\alpha}(\overline\Omega\setminus(\mathcal C\cup Z))$. 

\subsection{The absence of point defects}\label{ss:limsmooth}

To conclude the proof of Theorem~\ref{thm:main} $(iii)$ it remains to show that $Z$ is empty. 
The starting idea is to try and apply
the argument of \cite[Theorem 13]{colloid} (using reflections of the image in $\mathbb{S}^2$ and analyticity) to eliminate all point defects that are not required by topology. 
If the defect ring is negatively charged, that is $\tau=+1$ 
 in Lemma~\ref{l:n},  this argument may be applied to eliminate all point defects, and 
we carry out this analysis in \S~\ref{sss:pos}.
However, if the ring happens to be positively charged---corresponding to $\tau=-1$---then an additional pair of point defects would be required. To complete the proof we therefore need to show that the case $\tau=-1$ can not occur, and this is demonstrated in \S~\ref{sss:neg}.

\subsubsection{The case of a negatively charged ring $\tau=+1$}\label{sss:pos}

The argument of \cite[Theorem 13]{colloid} used to eliminate extraneous point defects relies on the construction of comparison maps, and thus we again face the sticky issue of controlling the boundary conditions on $\partial D^{ext}_\delta\cap D$.
 Specifically, we need to know that $n_3$ does not change sign on that boundary part, and 
our first step is therefore to gather stronger information about the trace of $n$ there.

\begin{lemma}\label{l:n3monot}
There exists $r_n\to 0$ such that $\theta\mapsto n_3(r_n,\theta)$ is strictly monotone on $[0,\theta_0(r_n)]$. 
\end{lemma}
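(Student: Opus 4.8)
The plan is to extract a good radius $r_n\to 0$ by a Fubini-type argument applied to the two energy quantities that are known to be small in an averaged sense: the angular energy excess $\eta(r)=\left|\int_0^{\theta_0(r)}\abs{\partial_\theta n}^2\,d\theta-\frac\pi 2\right|$, which by \eqref{eq:bound_dn} is integrable against $dr/r$ on $(0,1)$, and the radial energy $\int_0^{\theta_0(r)}\abs{\partial_r n}^2\,d\theta$, which by \eqref{eq:bound_dn} is integrable against $r\,dr$. First I would pick a dyadic subsequence of scales (as in the proof of Proposition~\ref{p:addsym}) along which the rescaled angular excess is summable, and then within each dyadic annulus choose $r_n$ so that $\eta(r_n)\le C(\ln\frac1{r_n})^{-1}\to 0$ and simultaneously $r_n\int_0^{\theta_0(r_n)}\abs{\partial_r n}^2\,d\theta\to 0$. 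Along such radii the one-dimensional map $\theta\mapsto n(r_n,\theta)$ on $[0,\theta_0(r_n)]$ is \emph{almost a minimizing geodesic}: its endpoints are fixed by \eqref{eq:noriented}--\eqref{eq:theta1} to be $\tau e_3=e_3$ at $\theta=0$ (we are in the case $\tau=+1$) and $\cos\theta_1 e_1+\sin\theta_1 e_3$ at $\theta=\theta_0(r_n)$, with $\theta_1=2\theta_0(r_n)-\pi=O(r_n)$, so the geodesic distance between the endpoints is $\frac\pi2+O(r_n)$, matching the energy lower bound $\frac1{\theta_0}(\mathrm{dist}_{\mathbb S^2})^2=\frac\pi2+O(r_n)$ up to the small excess $\eta(r_n)$.

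The core of the argument is then a quantitative stability estimate for geodesics on $\mathbb S^2$, exactly of the flavour already used in Lemma~\ref{l:finelowerboundhalfannulus}: if a curve $c\colon[0,\theta_0]\to\mathbb S^2$ joins two points at distance $\ell=\frac\pi2+O(r_n)$ and has energy $\int\abs{c'}^2\le \frac{\ell^2}{\theta_0}+\eta(r_n)$, then $c$ is $C^0$-close (in fact close in a stronger norm after controlling $\int\big||c'|-\mathrm{const}\big|^2$) to the unique length-minimizing great-circle arc between its endpoints. Concretely, the arc of the great circle through $e_3$ and $\cos\theta_1 e_1+\sin\theta_1 e_3$ lies in the $(e_1,e_3)$-plane and, parametrized on $[0,\theta_0]$, has third coordinate $\cos\theta$ near $\theta=0$, so its third coordinate is strictly decreasing with a derivative bounded away from $0$ on any interval $[0,\theta_0-\epsilon]$ (and the endpoint behaviour near $\theta_0$ is controlled since $\theta_1=O(r_n)$ keeps us away from the antipode). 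I would combine the $H^1$-closeness of $n(r_n,\cdot)$ to this reference arc with the elliptic interior estimates from Lemma~\ref{l:H1conv} (the convergence $Q_\xi\to Q_\star$ is $C^{1,\alpha}_{loc}$ away from $\mathcal C\cup Z$, and on the circle $r=r_n$, which is a fixed positive distance from $\mathcal C$ once $r_n$ is small, $n_\star$ is smooth), to upgrade $C^0$-closeness of $\partial_\theta n_3(r_n,\cdot)$ to the reference $-\sin\theta$; since the reference derivative is bounded away from zero on all of $(0,\theta_0(r_n))$ for $r_n$ small (here one uses $\theta_1\to 0$ so that $\theta_0\to\pi/2$ and the great-circle arc from $e_3$ essentially sweeps a quarter circle), the perturbation cannot create a sign change, giving strict monotonicity of $\theta\mapsto n_3(r_n,\theta)$.

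I expect the main obstacle to be the behaviour near the endpoint $\theta=\theta_0(r_n)$, where the target point $\cos\theta_1 e_1+\sin\theta_1 e_3$ has a small but nonzero $e_3$-component and the great-circle arc's $\theta$-derivative of $n_3$ is $-\sin\theta$, which is close to $-1$ there rather than degenerating — this is actually favourable, but one must be careful that the $H^1$ stability estimate genuinely controls $n_3'$ pointwise and not merely in an averaged sense near that endpoint. The cleanest route is to note that $|n'_s(\theta)|$ is close to the constant $\ell/\theta_0$ in $L^2$ (this is precisely the content of the inequality $\int\big||n'_s|-1\big|^2\le \int|n'_s|^2-\pi+8s$ from the proof of Lemma~\ref{l:finelowerboundhalfannulus}, rescaled), so $n(r_n,\cdot)$ is a near-constant-speed near-geodesic, and constant-speed geodesics between these endpoints are unique and lie strictly in the plane $n_2=0$ with strictly monotone $n_3$; the $C^{1,\alpha}$ regularity of $n_\star$ at the fixed scale $r=r_n$ then transfers $L^2$-closeness of the derivative into uniform closeness on compact subintervals, and a short separate argument handles the two short intervals $[0,\epsilon]$ and $[\theta_0-\epsilon,\theta_0]$ using that there $n_3'$ of the reference arc is bounded away from zero and $n$ is $C^1$-close to it. Assembling these pieces yields a radius $r_n$ — and, by construction, a whole sequence $r_n\to 0$ — along which $\theta\mapsto n_3(r_n,\theta)$ is strictly monotone.
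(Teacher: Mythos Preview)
Your overall strategy---select good radii via the integrability \eqref{eq:bound_dn} and argue that $n(r_n,\cdot)$ is close to a reference geodesic---is the same starting point as the paper's proof. But there is a genuine gap at the step where you pass from $L^2$-closeness of the angular derivative to pointwise strict monotonicity. The stability estimate (\`a la Lemma~\ref{l:finelowerboundhalfannulus}) gives only $\int_0^{\theta_0}\bigl||\partial_\theta n|-\text{const}\bigr|^2\,d\theta$ small, i.e.\ $L^2$ control of the speed. To conclude that $\partial_\theta n_3(r_n,\cdot)$ never vanishes you need $L^\infty$ control of the derivative, and your appeal to ``$C^{1,\alpha}$ regularity of $n_\star$ at the fixed scale $r=r_n$'' does not provide this: smoothness of $n_\star$ on $\{r=r_n\}$ for each fixed $r_n>0$ carries no quantitative bound that is uniform as $r_n\to 0$ (recall $r$ is the distance to the defect point $(1,0)$, so $r_n\to 0$ means you are approaching the singularity, not staying away from it). The paper closes this gap by a different route: it lifts $n=(\cos\varphi,0,\sin\varphi)$ (using $n_2\equiv 0$), writes the Euler--Lagrange equation for $\varphi$, and applies \emph{rescaled elliptic estimates} on dyadic annuli $A_\lambda$ to obtain $W^{2,1}$ control of $\varphi-\tau\varphi_0$; Sobolev embedding in one dimension then yields $\|\partial_\theta\varphi(r_n,\cdot)+\tau\|_{L^\infty}\to 0$, from which strict monotonicity of $\varphi$ (and hence of $n_3=\sin\varphi$) follows. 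The use of the PDE is the missing ingredient in your argument.

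There is also a geometric slip: you assert that the reference third coordinate has derivative bounded away from $0$ on $[0,\theta_0-\epsilon]$, and that near both endpoints ``$n_3'$ of the reference arc is bounded away from zero''. In fact the reference is $n_{0,3}=\tau\cos\theta$, so $\partial_\theta n_{0,3}=-\tau\sin\theta$ \emph{vanishes} at $\theta=0$. Working directly with $n_3$ therefore forces you to deal with a degenerate endpoint; the paper avoids this by working with the phase $\varphi$, whose reference derivative $\partial_\theta\varphi_0=-1$ is uniformly nonzero.
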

\begin{proof}[Proof of Lemma~\ref{l:n3monot}]
Since $D$ is simply connected, there exists a lifting $\varphi\in H^1_{loc}(D;\mathbb R)$ such that
\begin{equation*}
n=(\cos\varphi,0,\sin\varphi).
\end{equation*}
This lifting $\varphi$ is in fact smooth up to the boundary of $D$, except at points of the singular set $Z$ and at $(\rho,z)=(1,0)$. It is defined up to a constant multiple of $2\pi$, that one may fix by imposing $\varphi(r,\theta_0(r))=\theta_1(r)$, where we recall the definition \eqref{eq:theta1} of $\theta_1(r)$. For $\theta=0$ we then have $\varphi\equiv\tau\pi/2 + 2N\pi$ for some $N\in\mathbb Z$. This implies
\begin{align*}
\int_0^{\theta_0}\abs{\partial_\theta n}^2d\theta & =\int_0^{\theta_0}\abs{\partial_\theta\varphi}^2d\theta  \geq \frac{1}{\theta_0}\left(\int_0^{\theta_0}\partial_\theta\varphi \, d\theta \right)^2 \\
& = \frac{(\theta_1-\tau\pi/2 +2N\pi)^2}{\theta_0} = \frac \pi 2 (1-4\tau N)^2 + O(r).
\end{align*}
Recalling \eqref{eq:bound_dn}, we deduce that $N=0$. Therefore, for $r$ small we expect $n$ to be close to the  map
\begin{equation}\label{eq:varphi0}
n_0 =(\cos\varphi_0,0,\tau\sin\varphi_0),\qquad\varphi_0=\pi/2-\theta.
\end{equation}
In fact we have
\begin{align*}
\int_0^{\theta_0} \abs{\partial_\theta \varphi-\tau\partial_\theta \varphi_0}^2 \, d\theta
&=\int_0^{\theta_0}\abs{\partial_\theta\varphi}^2\,d\theta-\frac\pi 2 +(1+2\tau)\arcsin\frac r2 \\
&=\int_0^{\theta_0}\abs{\partial_\theta n}^2d\theta-\frac \pi 2 +O(r),
\end{align*}
and since $\varphi=\tau\varphi_0+O(r)$ at $\theta=0$ and $\theta=\theta_0(r)$, together with \eqref{eq:bound_dn} this shows that
\begin{equation}\label{eq:phi0arcs}
\int_0^1 \norm{\varphi(r,\cdot)-\tau\varphi_0(r,\cdot)}_{H^1(0,\theta_0(r))}\,\frac{dr}{r} <\infty.
\end{equation}
In particular, there are arbitrarily small $\delta$'s such that $n$ is very close to $n_0$ on $\partial D^{ext}_\delta\cap D$, in $H^1$ and thus also in $L^\infty$. 
Using the equation satisfied by $\varphi$ one can obtain a stronger estimate. Since $\varphi$ minimizes
\begin{equation*}
F(\varphi;D^{ext}_\delta)=\hat E(n;D^{ext}_\delta)=\int_{D^{ext}_\delta}\left[\abs{\nabla\varphi}^2+\frac{\cos^2\varphi}{\rho^2}\right]\rho\, d\rho\, dz,
\end{equation*}
it solves the Euler Lagrange equation
\begin{equation*}
\Delta\varphi +\frac 1\rho \partial_\rho\varphi =  -\frac{1}{\rho^2}\sin(2\varphi).
\end{equation*}
Using also $\Delta\varphi_0=0$,  rescaled elliptic estimates enable us to obtain a stronger control on $\varphi-\tau\varphi_0$ in the annular domain
\begin{equation*}
A_\lambda =\left\lbrace \frac\lambda 2 \leq r \leq 2\lambda\right\rbrace \cap D.
\end{equation*}
Specifically, elliptic estimates in the rescaled domain $\lambda^{-1}A_\lambda$ (which is Lipschitz independently of $\lambda$) give control on the following scale invariant quantity:
\begin{align*}
g(\lambda)&:=\norm{\varphi-\tau\varphi_0}^2_{L^\infty(A_\lambda)}
+\norm{\nabla\varphi -\tau\nabla\varphi_0}^2_{L^2(A_\lambda)}
+\norm{\nabla^2\varphi -\tau\nabla^2\varphi_0}^2_{L^1(A_\lambda)}\\
&\leq C \left(\lambda^2 + \norm{\varphi-\tau\varphi_0}^2_{L^\infty(\lbrace r=\lambda/2\rbrace)} + \norm{\varphi-\tau\varphi_0}^2_{L^\infty(\lbrace r=2\lambda\rbrace)} \right)\\
&\leq C\Big(\lambda^2 +  \norm{\varphi(\lambda/2,\cdot)-\tau\varphi_0(\lambda/2,\cdot)}^2_{H^1(0,\theta_0(\lambda/2))} \\
&\hspace{12em}+ \norm{\varphi(2\lambda,\cdot)-\tau\varphi_0(2\lambda,\cdot)}^2_{H^1(0,\theta_0(2\lambda))}\Big).
\end{align*}
Hence from \eqref{eq:phi0arcs} we deduce
\begin{align*}
\int_0^1 g(\lambda)\frac{d\lambda}{\lambda} <\infty,
\end{align*}
and there exists $\lambda_n\to 0$ such that $g(\lambda_n)\to 0$.
Moreover, by Sobolev embedding, it we have
\begin{align*}
&\int_{\lambda/2}^{2\lambda}\norm{\partial_\theta\varphi(r,\cdot)-\tau\partial_\theta\varphi_0(r,\cdot)}_{L^\infty(0,\theta_0(r))}\frac{dr}{r}
\\
&\leq C\int_{\lambda/2}^{2\lambda}\left( \norm{\partial_\theta\varphi(r,\cdot)-\tau\partial_\theta\varphi_0(r,\cdot)}_{L^2(0,\theta_0(r))} +  \norm{\partial_\theta^2\varphi(r,\cdot)-\tau\partial_\theta^2\varphi_0}_{L^1(0,\theta_0(r))}\right)\frac{dr}{r}\\
&\leq C\sqrt{g(\lambda)},
\end{align*}
thus we may find $r_n\in [\lambda_n/2,2\lambda_n]$ such that
\begin{equation*}
\norm{\partial_\theta\varphi(r_n,\cdot)-\tau\partial_\theta\varphi_0(r_n,\cdot)}_{L^\infty(0,\theta_0(r_n))}\leq C \sqrt{g(\lambda_n)}\longrightarrow 0.
\end{equation*}
Since $\partial_\theta\varphi_0=-1$, this implies in particular that $\varphi(r_n,\cdot)$ is strictly monotone for $n$ large enough. 
\end{proof}

Equipped with Lemma~\ref{l:n3monot}, we are now in a position to apply an argument similar to the one in \cite[Theorem~13]{colloid}. It enables us to conclude that $Z$ is empty if $\tau=+1$:

\begin{corollary}\label{cor:Z}
The set of singular points $Z\cap \lbrace z>1\rbrace$ is empty if $\tau=1$.
%, and consists of at most one point if $\tau=-1$.
\end{corollary}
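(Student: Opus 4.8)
The plan is to adapt the reflection argument of \cite[Theorem~13]{colloid}: using the boundary control from Lemma~\ref{l:n3monot}, one builds an energy-preserving competitor by folding the image of $n_\star$ across the great circle $\lbrace y_3=0\rbrace$ of $\mathbb S^2$, and then shows that the regularity this competitor inherits from minimality is incompatible with a point defect on the axis. First I would fix, towards a contradiction, a singular point $p=(0,z_p)\in Z$ with $z_p>1$. Since $Z$ is discrete on the axis $\lbrace\rho=0\rbrace$, I may choose $n$ so large in Lemma~\ref{l:n3monot} that $r_n<\sqrt2$; then $p$ lies on the axis part of $\partial D^{ext}_{r_n}$, and I set $\delta=r_n$. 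The decisive use of Lemma~\ref{l:n3monot} is on the artificial inner boundary $\partial D^{ext}_\delta\cap D=\lbrace r=\delta\rbrace$: there the lifting $\varphi$ built in its proof is strictly monotone in $\theta$, running from $\varphi(\delta,0)=\tau\pi/2=\pi/2$ to $\varphi(\delta,\theta_0(\delta))=\theta_1(\delta)>0$, so that $n_3=\sin\varphi\ge\sin\theta_1(\delta)>0$ on $\lbrace r=\delta\rbrace$. On the rest of $\partial D^{ext}_\delta$ the fixed boundary conditions already give $n_3\ge0$: one has $n=e_r=(\rho,0,z)$ with $z\ge0$ on $\lbrace\rho^2+z^2=1\rbrace$, $n=\tau e_3=e_3$ on $\lbrace z=0\rbrace$ (this is where $\tau=+1$ enters), and $n\to e_3$ near infinity. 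Hence $n_3\ge0$ on all of $\partial D^{ext}_\delta$.

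Next I would consider the folded map $\widehat n=(n_1,0,\abs{n_3})$ on $D^{ext}_\delta$, recalling $n_2\equiv0$ from Proposition~\ref{p:addsym}. Since $\abs{\nabla\abs{n_3}}=\abs{\nabla n_3}$ a.e.\ and $\widehat n_1^2=n_1^2$, this yields $\widehat n\in\hat{\mathcal H}(D^{ext}_\delta)$ with $\hat E(\widehat n;D^{ext}_\delta)=\hat E(n;D^{ext}_\delta)$, while $\widehat n=n$ on $\partial D^{ext}_\delta\cap D$ by the first step. By the minimality of $n$ in Lemma~\ref{l:n}, the map $\widehat n$ is therefore also a minimizer of $\hat E(\cdot;D^{ext}_\delta)$ with respect to its own boundary data on $\partial D^{ext}_\delta\cap D$; equivalently $\widehat Q_\star=\widehat n\otimes\widehat n-\frac13 I$ minimizes $E_\star(\cdot;\Omega^{ext}_\delta)$ among symmetric configurations, and is axially symmetric, so the regularity theory for axially symmetric minimizers in \cite{HKL90} applies and gives that $\widehat n$ is real-analytic in $D^{ext}_\delta$ away from a discrete subset $\widehat Z$ of the axis $\lbrace\rho=0\rbrace$.

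Finally I would derive the contradiction at $p$. The tangent map of the minimizing harmonic map $n_\star$ at the isolated singularity $(0,0,z_p)$ is a nonconstant minimizing harmonic map $\R^3\to\mathbb S^2$, hence a rotated hedgehog $\pm R\,x/\abs{x}$ with $R\in O(3)$ \cite{breziscoronlieb86}; equivariance forces $R$ to commute with all rotations about $e_3$, so $Re_3=\pm e_3$ and the tangent map takes the opposite values $+e_3$ and $-e_3$ on the two rays of the axis emanating from $p$ (equivalently, a genuine axial singularity makes the trace of $n_\star$ on the axis jump between $e_3$ and $-e_3$, cf.\ \cite{HKL90}). Consequently $n_3=n_{\star,3}$ changes sign in every neighbourhood of $p$ inside $D$. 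Being real-analytic and nonconstant on a punctured neighbourhood of $p$ in $D$, its zero set there is a real-analytic $1$-dimensional set along which, off a discrete subset, $\nabla n_3\neq0$; at such points $\abs{n_3}$, hence $\widehat n$, is not $C^1$, so $\widehat n$ fails to be real-analytic on a one-dimensional subset of $D^{ext}_\delta$ disjoint from the axis --- contradicting the discreteness of $\widehat Z$. (Equivalently: discreteness of $\widehat Z$ forces $n_3$ to keep a constant sign in $D^{ext}_\delta$, hence $n_3\ge0$ there, whereas the hedgehog profile at $p$ makes $n_3<0$ on a nonempty open set.) As $\delta=r_n$ can be made arbitrarily small, we conclude $Z\cap\lbrace z>1\rbrace=\emptyset$.

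The main obstacle is the first step. A priori one has no information whatsoever about $n_\star$ on the artificial boundary $\partial D^{ext}_\delta\cap D$, and without it the fold is not an admissible competitor; supplying this control is exactly the role of Lemma~\ref{l:n3monot} (and, behind it, the energy rigidity of Lemma~\ref{l:finelowerboundhalfannulus}). The remaining ingredients --- that folding preserves the Dirichlet energy, that the symmetric minimality of $n$ passes to $\widehat n$, and that the regularity theory of \cite{HKL90} then applies --- are comparatively routine. The hypothesis $\tau=+1$ is essential as well: when $\tau=-1$ the folded map no longer matches the fixed condition $n=\tau e_3$ on $\lbrace z=0\rbrace$, which is precisely why the positively charged ring demands the separate treatment of \S~\ref{sss:neg}.
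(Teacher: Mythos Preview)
Your proof is correct and follows essentially the same approach as the paper's: use Lemma~\ref{l:n3monot} to ensure $n_3>0$ on the artificial boundary $\partial D^{ext}_\delta\cap D$, fold $n$ to $\widehat n=(n_1,0,|n_3|)$ to obtain a competitor with equal energy and matching boundary data, invoke minimality and the regularity theory of \cite{HKL90} to conclude $\widehat n$ is real-analytic in the interior away from the axis, and derive a contradiction from the fact that a singular point on the axis forces $n_3$ to change sign. The paper's version compresses the last step into the single phrase ``$\tilde n$\ldots must be analytic inside $D^{ext}_\delta$. This implies that $n_3\geq 0$,'' whereas you spell out the mechanism (the zero set of $n_3$ is one-dimensional off the axis and $|n_3|$ has a kink there). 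Your additional appeal to the tangent-map classification of \cite{breziscoronlieb86} is more than is needed---the parenthetical alternative you give (on the axis $n=\pm e_3$ by finite energy, and a singularity is precisely a sign jump) already suffices and is what the paper implicitly relies on.
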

\begin{proof}[Proof of Corollary~\ref{cor:Z}]
Assume that $\tau=+1$. Take $\delta=r_n$ provided by Lemma~\ref{l:n3monot}. Then $n_3>0$ on $\partial D^{ext}_\delta\cap\lbrace \rho>0\rbrace$. Therefore, the map $\tilde n =(n_1,n_2,\abs{n_3})$ is an admissible comparison map in $D^{ext}_\delta$ and has the same energy as $n$, hence is a minimizer and must be analytic inside $D^{ext}_\delta$. This implies that $n_3\geq 0$ inside $D^{ext}_\delta$. Since $Z$ corresponds to changes of sign of $n_3$, we deduce that $Z=\emptyset$.
%
%Then assume that $\tau=-1$. For $\delta=r_n$ as above, $n_3$ changes sign exactly once on $\partial D^{ext}_\delta\cap\lbrace \rho>0\rbrace$. Hence among the connected components of $\lbrace n_3>0\rbrace$, only one meets $\partial D^{ext}_\delta\cap\lbrace \rho>0\rbrace$. If there were another connected component of $\lbrace n_3>0\rbrace\cap D^{ext}_\delta$, one would obtain a map with the same energy by reflecting $n_3$ to $-n_3$ in that component, but this would contradict the analyticity of $n$. Thus $\lbrace n_3>0\rbrace\cap D^{ext}_\delta$ is connected, and by a similar reasoning so is $\lbrace n_3<0\rbrace \cap D^{ext}_\delta$. Hence, arguing exaclty as in \cite[Theorem~13]{colloid} we deduce that $Z$ contains at most one point.
\end{proof}

%
%With a bit more work, one could probably show that $Z$ contains exaclty one point if $\tau=-1$, so that $Z$ is empty if and only if $\tau=+1$. But proving it is pointless, since anyway what we want to do now is show that $\tau=+1$.

\subsubsection{Ruling out the case of a positively charged ring $\tau=-1$}\label{sss:neg}

 The end of the proof will consist in ruling out the case $\tau=-1$. This is the most delicate part of the argument, and it has two main steps:
 \begin{itemize}
 \item  In the first step we consider the complement of a small region around the ring defect, and show that the energy cost of a point defect away from this neighborhood is a strictly positive quantity of order $O(1)$.  This is done in Lemma~\ref{l:compareEtau}, using a variation of the argument already used in Corollary~\ref{cor:Z}, but with more precise boundary estimates.
\item
In the second step we derive a more precise estimate of the energy concentrated in
a small region around the ring defect, in order to conclude that the O(1) increase obtained in the first step (away from the ring defect) leads to a strict increase of the total energy. Specifically we show that the core energy of the ring defect is independent of the ring's charge, up to an error of smaller order.  To this end, we construct (in  Lemma~\ref{l:Pxi}) a comparison map which modifies boundary values in a singular region between the particle and a curve tangent to it; the error thus introduced can be controlled thanks to Lemma~\ref{l:finelowerboundhalfannulus}. 
\end{itemize}
For later use, we derive the following useful estimates, based on those established in the  proof of  Lemma~\ref{l:n3monot}:
\begin{lemma}\label{l:estimphi0}
As $\delta\to 0$, we have
\begin{equation*}
\norm{\varphi-\tau\varphi_0}_{L^\infty(D^{int}_\delta)} + \norm{\nabla\varphi-\tau\nabla\varphi_0}_{L^2(D^{int}_\delta)}\longrightarrow 0,
\end{equation*}
where $D^{int}_\delta=D\setminus D^{ext}_\delta=\{(\rho,z)\in D: \ (\rho-1)^2 + z^2\le \delta^2\}$.
\end{lemma}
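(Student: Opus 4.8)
The plan is to prove both bounds for an \emph{arbitrary} sequence $\delta\to 0$, working in the cross-section with the polar coordinates $(r,\theta)$ around $(\rho,z)=(1,0)$ introduced in the proof of Lemma~\ref{l:n3monot} and setting $w:=\varphi-\tau\varphi_0$, with $\varphi_0=\pi/2-\theta$ as in \eqref{eq:varphi0}. The only inputs needed are those already established there: the normalization fixing $\varphi(r,0)=\tau\pi/2$ and, via \eqref{eq:theta1}, $\varphi(r,\theta_0(r))=\theta_1(r)=2\arcsin(r/2)$, so that $w$ vanishes on $\{z=0\}$ and $\abs{w(r,\theta_0(r))}\le Cr$ on $\{\rho^2+z^2=1\}$; and the bound \eqref{eq:bound_dn}, which says that $\eta(r):=\abs{\int_0^{\theta_0(r)}\abs{\partial_\theta n}^2\,d\theta-\tfrac{\pi}{2}}$ lies in $L^1((0,1);dr/r)$ and that $\int_0^1\!\int_0^{\theta_0(r)}\abs{\partial_r n}^2\,d\theta\,r\,dr<\infty$.

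I would first dispose of the gradient estimate, which is soft. Since $\partial_r\varphi_0=0$, $\partial_\theta\varphi_0=-1$ and $\rho=1+r\cos\theta\approx 1$ on $D^{int}_\delta$, one splits $\norm{\nabla\varphi-\tau\nabla\varphi_0}_{L^2(D^{int}_\delta)}^2$ into a $\partial_r$-part, bounded by $\int_0^\delta\!\int_0^{\theta_0(r)}\abs{\partial_r n}^2\,d\theta\,r\,dr$, which is $o(1)$ by absolute continuity of the integral in \eqref{eq:bound_dn}, and a $\partial_\theta$-part, bounded by $\int_0^\delta\bigl(\int_0^{\theta_0(r)}\abs{\partial_\theta\varphi+\tau}^2\,d\theta\bigr)\,dr/r$. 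Expanding $\abs{\partial_\theta\varphi+\tau}^2$, integrating in $\theta$, and using the two boundary values of $\varphi$ and $\theta_0(r)=\pi/2+\arcsin(r/2)$ gives the slice identity $\int_0^{\theta_0(r)}\abs{\partial_\theta\varphi+\tau}^2\,d\theta=\bigl(\int_0^{\theta_0(r)}\abs{\partial_\theta n}^2\,d\theta-\tfrac{\pi}{2}\bigr)+O(r)\le\eta(r)+Cr$; hence the $\partial_\theta$-part is $\le 2\int_0^\delta\eta(r)\,dr/r+C\delta\to 0$ by \eqref{eq:bound_dn}.

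The delicate part is the sup estimate. The issue is that \eqref{eq:bound_dn} only controls $\eta$ in $L^1(dr/r)$, so $\eta$ may a priori spike and the bound $\norm{w(r,\cdot)}_{L^\infty}\le C(\eta(r)+r)^{1/2}$ (which the slice identity and a one‑dimensional Poincar\'e inequality would give on each circle $\abs{x-(1,0)}=r$) does not produce a limit. The remedy is to replace pointwise‑in‑$r$ data by an average. I would use the divergence form of the Euler--Lagrange equation for $w$, $\nabla\cdot(\rho\nabla w)=-\rho^{-1}\sin 2\varphi-\tau\,\partial_\rho\varphi_0$, whose right‑hand side is bounded by $C/\abs{x-(1,0)}$ near the corner \emph{uniformly}, since $\partial_\rho\varphi_0=\sin\theta/r$ is explicit and no gradient bound on $\varphi$ itself is needed. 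On the dyadic annular sector $A_r=\{r/2\le\abs{x-(1,0)}\le 2r\}\cap D$ I would rescale by $r$: the rescaled sector is uniformly Lipschitz (converging to a fixed quarter‑annulus as $r\to 0$), the rescaled weight converges to $1$, and the rescaled right‑hand side is $O(r)$; the scale‑invariant elliptic estimate — interior away from the circular parts of $\partial A_r$ and up to the Dirichlet boundary on the radial parts, exactly the machinery producing the bound on $g(\lambda)$ in Lemma~\ref{l:n3monot}, but now fed with the $L^2$ average of $w$ over $A_r$ rather than the values of $w$ on the two bounding circles — then gives
\[\norm{w}_{L^\infty\bigl(A_r\cap\{2r/3\le\abs{x-(1,0)}\le 3r/2\}\bigr)}\le C\Bigl(\tfrac1r\norm{w}_{L^2(A_r,dx)}+r\Bigr),\]
the boundary contributions on $\{z=0\}$ and $\{\rho^2+z^2=1\}$ being $O(r)$. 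Bounding $\norm{w}_{L^2(A_r,dx)}$ by Poincar\'e in $\theta$ (using $w(s,0)=0$) together with the slice bound $\int_0^{\theta_0(s)}\abs{\partial_\theta w}^2\,d\theta\le\eta(s)+Cs$, then integrating $s\,ds$ over $[r/2,2r]$ (where $s\approx r$), turns the right‑hand side into $C\bigl(\int_{r/2}^{2r}\eta(s)\,ds/s+r\bigr)^{1/2}$. Since every point of $D^{int}_\delta$ lies on a circle $\abs{x-(1,0)}=r$ with $r\le\delta$, hence in the corresponding middle region above, taking the supremum yields $\norm{\varphi-\tau\varphi_0}_{L^\infty(D^{int}_\delta)}\le C\bigl(\int_0^{2\delta}\eta(s)\,ds/s+\delta\bigr)^{1/2}\to 0$ by \eqref{eq:bound_dn}.

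The main obstacle is precisely this passage from the integral control of $\eta$ coming out of Lemma~\ref{l:n3monot} to an honest $L^\infty$ limit along every sequence $\delta\to 0$; it is overcome by never evaluating $w$ on an individual circle but running the elliptic estimate on a dyadic annulus with an $L^2$ average, which replaces $\eta(r)$ by the tail $\int_{r/2}^{2r}\eta(s)\,ds/s$ of a convergent integral. The remaining verifications — uniformity of the elliptic estimate and of the rescaled domains as $r\to 0$, and smoothness of $\varphi$ up to the radial boundary away from $(1,0)$ — are routine given the HKL regularity and the estimates already carried out in the proof of Lemma~\ref{l:n3monot}.
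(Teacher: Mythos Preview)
Your argument is correct, and it differs from the paper's in a genuine (if mild) way. Both proofs rest on rescaled elliptic estimates on dyadic annuli $A_r$ around $(1,0)$, but they feed and post-process those estimates differently.

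The paper simply recycles the bound on $g(\lambda)$ already obtained in Lemma~\ref{l:n3monot}: there the input to the elliptic estimate was the $L^\infty$ trace of $w=\varphi-\tau\varphi_0$ on the two bounding circles, controlled in turn via \eqref{eq:phi0arcs} (i.e.\ the $H^1$-on-circles bound). This yields $\int_0^1 f(\lambda)\,d\lambda/\lambda<\infty$ for $f(\lambda)=\norm{w}_{L^\infty(A_\lambda)}^2+\norm{\nabla w}_{L^2(A_\lambda)}^2$, and the lemma follows from a short dyadic selection: one chooses $\lambda_n\in[1/2,1]$ with $\sum_n f(2^{-n}\lambda_n)<\infty$, and uses the overlap of the $A_{2^{-n}\lambda_n}$ to bound both norms on $D^{int}_{2^{-N}}$ by the tail of that convergent series.

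You instead (a) prove the gradient part directly from \eqref{eq:bound_dn} via the slice identity $\int_0^{\theta_0}\abs{\partial_\theta w}^2\,d\theta=\eta(r)+O(r)$ and absolute continuity, and (b) run the elliptic estimate with the $L^2$ \emph{average} of $w$ on $A_r$ (controlled by Poincar\'e in $\theta$ plus the same slice identity) rather than boundary traces on circles. This produces the explicit pointwise bound $\norm{w}_{L^\infty(D^{int}_\delta)}\lesssim\bigl(\int_0^{2\delta}\eta(s)\,ds/s+\delta\bigr)^{1/2}$, which tends to $0$ without any selection step. The upshot: the paper's route is shorter because it reuses \eqref{eq:phi0arcs} and the $g(\lambda)$ estimate wholesale, while your route needs only the more primitive input \eqref{eq:bound_dn} and bypasses the dyadic summability trick entirely, at the price of redoing the elliptic estimate in its $L^2$-to-$L^\infty$ form.
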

\begin{proof}[Proof of Lemma~\ref{l:estimphi0}]
It is shown in Lemma~\ref{l:n3monot} that
\begin{equation*}
f(\lambda)=\norm{\varphi-\tau\varphi_0}^2_{L^\infty(A_\lambda)}
+\norm{\nabla\varphi -\tau\nabla\varphi_0}^2_{L^2(A_\lambda)},
\end{equation*}
satisfies
\begin{equation*}
\int_0^1 f(\lambda)\frac{d\lambda}{\lambda} = \sum_{n=0}^\infty \int_{1/2}^1 f(2^{-n}\lambda)\frac{d\lambda}{\lambda} <\infty.
\end{equation*}
We may thus pick $\lambda_n\in[1/2,1]$ such that
\begin{equation*}
\sum_{n=0}^\infty f(2^{-n}\lambda_n)<\infty.
\end{equation*}
Since $A_{(2^{-n-1}\lambda_{n+1})}$ overlaps with $A_{(2^{-n}\lambda_n)}$, we deduce that 
\begin{align*}
& \norm{\varphi-\tau\varphi_0}^2_{L^\infty(D\setminus D^{ext}_{2^{-N}})} + \norm{\nabla\varphi-\tau\nabla\varphi_0}^2_{L^2(D\setminus D^{ext}_{2^{-N}})}\\
&\leq \sum_{n=N-1}^\infty f(2^{-n}\lambda_n) \longrightarrow 0,
\end{align*}
as $N\to\infty$.
\end{proof}

In order to rule out the case $\tau=-1$, we wish to estimate the difference of total energy between the two cases $\tau=\pm 1$. To this end, we recall the definition \eqref{eq:theta1} of $\theta_1(r)$  in representing the boundary condition on the colloid, and introduce the notation:
\begin{align}
\mathbb E^\tau [\delta] & = \min \Big\lbrace F(\varphi;D^{ext}_\delta)\colon
\varphi = \tau\pi/2\text{ for }\theta=0,\nonumber\\
&\hspace{8.5em}\varphi(r,\theta_0(r)) = \theta_1(r)\text{ for }\delta< r<\sqrt 2\nonumber\\
&\hspace{8.5em}\varphi = \phi_{0}^{\tau}\text{ for }r=\delta\Big\rbrace,\nonumber\\
\phi_{0}^{\tau}(r,\theta)&=\tau\frac\pi 2 -\lambda(r)\theta,\quad\lambda(r)=\frac{\tau\pi/2-\theta_1(r)}{\theta_0(r)}.\label{eq:phi0pm}
\end{align}
Then we compare $\mathbb E^+[\delta]$ and $\mathbb E^-[\delta]$ for small $\delta$:
\begin{lemma}\label{l:compareEtau}
we have
\begin{equation*}
\limsup_{\delta\to 0}\left(\mathbb E^+[\delta]-\mathbb E^-[\delta]\right) < 0.
\end{equation*}
\end{lemma}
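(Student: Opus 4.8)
The plan is to compare the two one-dimensional-on-each-circle energies by an explicit competitor for the case $\tau=+1$ built from the (essentially optimal) configuration for $\tau=-1$, and to extract the strict gain from the phase winding. First I would record that $\mathbb E^\tau[\delta] = \hat E(n;D^{ext}_\delta)$ where $n = (\cos\varphi,0,\sin\varphi)$ with $\varphi$ the minimizer in the admissible class defining $\mathbb E^\tau[\delta]$; the relevant functional is $F(\varphi;D^{ext}_\delta)=\int_{D^{ext}_\delta}\bigl(|\nabla\varphi|^2 + \rho^{-2}\cos^2\varphi\bigr)\rho\,d\rho\,dz$. The reference map $\varphi_0=\pi/2-\theta$ from \eqref{eq:varphi0} has $\cos^2\varphi_0 = \sin^2\theta$, and in the thin region near the colloid ($r$ small, $\theta$ ranging in $[0,\theta_0(r)]\approx[0,\pi/2]$) the leading-order energy of $\varphi_0$ on a circle $\partial B((1,0),r)\cap D$ is $\int_0^{\theta_0(r)}|\partial_\theta\varphi_0|^2\,d\theta = \theta_0(r) = \pi/2 + O(r)$, matching the $\pi/2$ per logarithmic scale seen in \eqref{eq:bound_dn}. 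For $\tau=-1$, the boundary data $\varphi=-\pi/2$ at $\theta=0$ and $\varphi(r,\theta_0(r))=\theta_1(r)=2\theta_0(r)-\pi = O(r)$ forces the phase to traverse an angular distance of roughly $\pi/2 + \pi + O(r) = 3\pi/2 + O(r)$ along each circle $\{r=\text{const}\}$ near the origin, whereas for $\tau=+1$ the traversal is only $\pi/2 + O(r)$. Thus the per-scale angular energy is $\sim (3\pi/2)^2/(\pi/2) = 9\pi/2$ in the $\tau=-1$ case versus $\pi/2$ in the $\tau=+1$ case: the $\tau=-1$ configuration is genuinely more expensive near the ring.

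The difficulty is that $\mathbb E^\tau[\delta]$ is defined only on $D^{ext}_\delta$ with prescribed data $\phi_0^\tau$ at $r=\delta$ and on the colloid, so I cannot simply integrate the above heuristic down to $r=0$; both $\mathbb E^+[\delta]$ and $\mathbb E^-[\delta]$ diverge (like $\frac{9\pi}{2}\ln\frac1\delta$ and $\frac\pi2\ln\frac1\delta$ respectively, after correcting the constants — actually the relevant comparison is between the two with the {\it same} outer scale, so the divergent parts match up to the winding contribution) and I want the {\it difference} to have a strictly negative limsup. The clean way is to produce, for each $\delta$, a test map $\psi^\delta$ admissible for $\mathbb E^+[\delta]$ whose energy is at most $\mathbb E^-[\delta] - c + o(1)$ for some universal $c>0$. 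Concretely: let $\varphi^{-,\delta}$ be an (almost-)minimizer for $\mathbb E^-[\delta]$; I claim one may take it close to $-\varphi_0 = \theta - \pi/2$ up to an error that is small in the relevant norms, by the same rescaled-elliptic-estimate argument as in Lemma~\ref{l:n3monot} (the Euler--Lagrange equation $\Delta\varphi + \rho^{-1}\partial_\rho\varphi = -\rho^{-2}\sin 2\varphi$ and $\Delta\varphi_0=0$ give control on $g(\lambda)$ on dyadic annuli, and $\int_0^1 g(\lambda)\,d\lambda/\lambda<\infty$). Then I would modify $-\varphi_0$ to $+\varphi_0$ by a localized change: in a fixed small annular region $\{\delta < r < r_*\}\cap D$ (with $r_*$ a small universal constant, $\delta \ll r_*$), interpolate the boundary traces; because the two reference maps $\pm\varphi_0$ differ by a reflection, one can compute directly that replacing $-\varphi_0$ by $+\varphi_0$ on $\{r<r_*\}$ saves energy at leading order exactly the winding gain, while the interpolation layer at $r\sim r_*$ costs only $O(1)$ — independent of $\delta$ — and, crucially, {\it strictly less} than the gain if $r_*$ is chosen small enough, because the gain is $\sim\frac{9\pi - \pi}{2}\ln\frac{r_*}{\delta} = 4\pi\ln\frac{r_*}\delta$ growing as $\delta\to 0$ whereas the layer cost is bounded. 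Wait — that makes the difference go to $-\infty$, not merely negative; re-examining, since $\mathbb E^+[\delta]$ and $\mathbb E^-[\delta]$ have {\it different} prescribed data $\phi_0^+$ vs $\phi_0^-$ at $r=\delta$, the honest statement is: the competitor for $\mathbb E^+[\delta]$ must equal $\phi_0^+$ at $r=\delta$, and near $r=\delta$ both $\phi_0^\pm$ have the same per-scale angular energy contribution only after the winding is fixed; the net effect is that $\mathbb E^-[\delta] - \mathbb E^+[\delta] \ge c\ln\frac1\delta - C \to +\infty$, in particular $> 0$ for small $\delta$, which is what is needed (and is stronger than the stated $\limsup < 0$ of $\mathbb E^+ - \mathbb E^-$).

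Carrying this out, the main obstacle — and the step that needs care rather than cleverness — is the rigorous energy accounting on the interpolation layer near $r=\delta$, making sure the prescribed data $\phi_0^\tau$ are compatible with the competitor with only $o(1)$ or $O(1)$ excess; one must use the explicit form \eqref{eq:phi0pm} of $\phi_0^\tau$ and the fact that $\theta_1(r) = O(r)$, $\theta_0(r) = \pi/2 + O(r)$ so that $\lambda(r) = \tau - \tau\arcsin(r/2)\cdot\frac{2}{\pi} + \dots$ and the linear profiles $\phi_0^+$ and $\phi_0^-$ differ essentially by the sign of the slope plus the constant $\pi$. A clean implementation: define the competitor for $\mathbb E^+[\delta]$ to be $\psi^\delta = \varphi^{-,\delta} + \chi(r)\cdot(\text{correction})$ where $\chi$ is a logarithmic cutoff on $\{\delta < r < \sqrt\delta\}$ (so that $|\chi'|^2 r$ integrates to $O(1/|\ln\delta|)$) transitioning the boundary value at $r=\delta$ from $\phi_0^-$ to $\phi_0^+$; the gradient cost of this transition is $O(1/|\ln\delta|) = o(1)$, while the $\cos^2\psi/\rho^2$ term changes by $O(1)$ at worst, and the interior region $\{r>\sqrt\delta\}$ is untouched — but now the winding of $\psi^\delta$ is that of $\varphi^{-,\delta}$, which is wrong for the $\tau=+1$ class. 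So the correction must instead be a global one absorbing the $2\pi$-type difference between the two winding classes; this is exactly the subtlety and is handled by noting that $\phi_0^+$ and $\phi_0^-$ have {\it the same} winding number $N=0$ in the sense established in Lemma~\ref{l:n3monot} — the difference between them is not topological but is the choice $\varphi \approx +\varphi_0$ versus $\varphi\approx-\varphi_0$, i.e. $n_3 = \sin\varphi$ versus $-\sin\varphi$ — hence there {\it is} no topological obstruction, and one genuinely can interpolate. With this observation the gain $\mathbb E^-[\delta]-\mathbb E^+[\delta]$ reduces to the integral over $\delta<r<r_*$ of the difference $\int_0^{\theta_0(r)}(|\partial_\theta\varphi^{-,\delta}|^2 - |\partial_\theta\varphi^{+,\delta}|^2)\,d\theta\cdot\frac{\rho\,dr}{r}$, which by the stability estimate (as in Lemma~\ref{l:finelowerboundhalfannulus}) and the geodesic lower bound is bounded below by $(9\pi/2 - \pi/2 - o(1))\ln\frac{r_*}\delta - C \to +\infty$. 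I would therefore state the conclusion in the (slightly stronger) form $\mathbb E^-[\delta] - \mathbb E^+[\delta] \to +\infty$, which yields $\limsup_{\delta\to0}(\mathbb E^+[\delta]-\mathbb E^-[\delta]) = -\infty < 0$ as required.
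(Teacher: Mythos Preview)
Your proposal rests on a false computation. For $\tau=-1$ the boundary data are $\varphi=-\pi/2$ at $\theta=0$ and $\varphi=\theta_1(r)=2\theta_0(r)-\pi=O(r)$ at $\theta=\theta_0(r)$, so the phase traverses $|\theta_1(r)-(-\pi/2)|=\pi/2+O(r)$, \emph{exactly the same} as for $\tau=+1$ (where it goes from $\pi/2$ to $\theta_1(r)$). There is no $3\pi/2$ traversal. Consequently the per-circle angular energy is $\frac{(\pi/2+O(r))^2}{\theta_0(r)}=\pi/2+O(r)$ in \emph{both} cases, and both $\mathbb E^+[\delta]$ and $\mathbb E^-[\delta]$ have the \emph{same} leading term $\frac{\pi}{2}\ln\frac1\delta$. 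Their difference is $O(1)$, not divergent; indeed this is precisely the content of Lemma~\ref{l:Pxi}, which shows the core energy is independent of the ring's sign up to $o(1)$. Your conclusion $\mathbb E^-[\delta]-\mathbb E^+[\delta]\to+\infty$ would directly contradict that lemma.

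The strict inequality in Lemma~\ref{l:compareEtau} is therefore an $O(1)$ effect, and it does not come from the neighborhood of the ring at all. The paper's argument is qualitative: one passes to the limits $\varphi^\pm=\lim_{\delta\to0}\varphi_\delta^\pm$; for $\tau=-1$ the incompatibility between $\varphi=-\pi/2$ on $\{z=0\}$ and $\varphi=\pi/2$ at the north pole of the colloid forces a sign change of $\varphi^-$ along the axis $\{\rho=0\}$, i.e.\ a point defect. Then $\psi:=|\varphi^-|$ is admissible for the $\tau=+1$ problem with $F(\psi)=F(\varphi^-)$ (since $|\nabla|\varphi^-||=|\nabla\varphi^-|$ and $\cos^2|\varphi^-|=\cos^2\varphi^-$), but $\psi$ is not analytic across the zero set of $\varphi^-$ and hence cannot minimize. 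The strict improvement of $\psi$ on a fixed compact set $D^{ext}_{\delta_0}$ supplies a universal $\varepsilon>0$, and careful cutoff arguments (using the $L^\infty\cap H^1$ closeness of minimizers to $\phi_0^\pm$ near the ring) propagate this to the stated $\limsup<0$. Your approach does not touch this mechanism.
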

\begin{proof}[Proof of Lemma~\ref{l:compareEtau}]
We denote by $\varphi_\delta^\tau$ the minimizer in $\mathbb E^\tau[\delta]$. A simple construction provides the upper bound
\begin{equation*}
F(\varphi_\delta^\tau;D^{ext}_\eta)\leq \frac\pi 2\ln\frac 1\eta +  C,\qquad\forall\eta\geq\delta.
\end{equation*}
Arguing as in Lemma~\ref{l:H1conv} we thus have, up to extracting a further subsequence, a limit $\varphi_\delta^\tau\to\varphi^\tau$ in $H^1_{loc}(D)$ as $\delta\to 0$, and  $F(\varphi_\delta^\tau;D^{ext}_\eta)\to F(\varphi^\tau;D^{ext}_\eta)$ for all $\eta>0$. The function $\varphi^\tau$ satisfies the boundary conditions
\begin{align*}
\varphi &= \tau\pi/2\text{ for }\theta=0,\\
\varphi(r,\theta_0(r)) &= \theta_1(r)=2\theta_0(r)-\pi\text{ for }\delta< r<\sqrt 2,
\end{align*}
and minimizes $F(\cdot;D^{ext}_\eta)$ among functions that agree with $\varphi^\tau$ on $\partial D^{ext}_\eta \cap \lbrace \rho>0\rbrace$, for all $\eta>0$. The arguments in Lemma~\ref{l:n3monot} and in Lemma~\ref{l:estimphi0} carry over, and we find that $\varphi^\tau$ is analytic in $\overline D \setminus (Z\cup \lbrace (1,0)\rbrace)$, where $Z=\emptyset$ if $\tau=+1$, and $Z=\lbrace (0,z_0)\rbrace$ for some $z_0>0$ if $\tau=-1$. Moreover we have
\begin{equation*}
\norm{\varphi^\tau-\tau\varphi_0}_{L^\infty(D^{int}_\eta)} + \norm{\nabla\varphi^\tau-\tau\nabla\varphi_0}_{L^2(D^{int}_\eta)}\to 0\quad\text{as }\eta\to 0.
\end{equation*}
We set $\psi = \abs{\varphi^-}$, so that
\begin{equation*}
F(\psi;D^{ext}_\delta)=F(\varphi^-;D^{ext}_\delta)\quad\text{ and }\psi=\varphi^+\text{ on }\partial D^{ext}_\delta \cap\lbrace \rho>0\rbrace.
\end{equation*}
for all $\delta>0$. Next, from the estimates for $\varphi^-$ and easy estimates on $(\abs{\varphi_0}-\varphi_0)$ and $(\varphi_0-\phi_0^+)$ we deduce that  
\begin{equation}\label{eq:estimpsi}
\norm{\psi-\phi_{0}^+}_{L^\infty(D^{int}_\eta)} + \norm{\nabla\psi-\nabla\phi_0^+}_{L^2(D^{int}_\eta)}\to 0\quad\text{as }\eta\to 0.
\end{equation}
Also note that, since $\varphi^-$ changes sign at one point of $\partial D^{ext}_\delta\cap D$ for small enough $\delta$, $\psi$ can not be locally analytic inside $D^{ext}_\delta$. In particular, it is certainly not a minimizer of $F(\cdot;D^{ext}_\delta)$ for any $\delta$ small enough. Therefore, for some small fixed $\delta_0$ there exists a function $\xi$ such that $\xi=\psi$ on $\partial D^{ext}_{\delta_0}\cap\lbrace \rho>0\rbrace$,  and
\begin{equation*}
\varepsilon := F(\psi;D^{ext}_{\delta_0})-F(\xi;D^{ext}_{\delta_0})>0.
\end{equation*}
Let $\eta<\delta_0/2$.
Consistently with its boundary conditions, we may extend $\xi$ to $D$ by setting 
\begin{equation*}
\xi=\psi\text{ in }D\setminus D^{ext}_{\delta_0}.
\end{equation*}
 Next we introduce a modified function $\xi_\eta$  given by
\begin{align*}
\xi_\eta & = \mu \xi + (1-\mu)\phi_0^+,\\
\mu & = \mu(r) = \begin{cases}
1 & \text{ for }r>2 \eta,\\
0 & \text{ for }r<\eta,\\
\frac 1{\ln 2}\ln\frac{r}{\eta} &\text{ for }\eta<r<2\eta,
\end{cases}
\end{align*}
so that $\xi_\eta=\xi$ in $D^{ext}_{2\eta}$, $\xi_\eta=\varphi_\eta^+$ on $\partial D^{ext}_\eta\cap\lbrace \rho>0\rbrace$, and
\begin{align*}
&\norm{\xi-\xi_\eta}^2_{L^\infty(D^{int}_{2\eta})}+\norm{\nabla\xi-\nabla\xi_\eta}^2_{L^2(D^{int}_{2\eta})}\\
 &\leq \left(1+2 \int_\eta^{2\eta} (\mu')^2 r\,dr \right)\norm{\psi-\phi_0^+}^2_{L^\infty(D^{int}_{2\eta})}
 +2 \norm{\nabla\psi-\nabla\phi_0^+}^2_{L^2(D^{int}_{2\eta})}\\
 & = 3 \norm{\psi-\phi_0^+}^2_{L^\infty(D^{int}_{2\eta})}
 +2 \norm{\nabla\psi-\nabla\phi_0^+}^2_{L^2(D^{int}_{2\eta})}.
\end{align*}
Thanks to \eqref{eq:estimpsi}, we therefore have a function $R(\eta)$ which tends to zero as $\eta\to 0$, such that
\begin{align*}
\varepsilon & = F(\psi;D^{ext}_{\eta})-F(\xi;D^{ext}_{\eta})=F(\varphi^-;D^{ext}_\eta)-F(\xi;D^{ext}_{\eta}) \\
&\leq F(\varphi^-;D^{ext}_\eta)-F(\xi_\eta;D^{ext}_\eta) + R(\eta) \\
&\leq F(\varphi^-;D^{ext}_\eta)-\mathbb E^+[\eta] + R(\eta).
\end{align*}
The last inequality holds by definition of $\mathbb E^+$ because $\xi_\eta=\varphi_\eta^+$ on $\partial D^{ext}_\eta\cap\lbrace \rho>0\rbrace$. Recalling the definition of $\mathbb E^-$ and taking the limit as $\eta\to 0$, we find
\begin{equation*}
\limsup_{\eta\to 0}\left(\mathbb E^+[\eta]-\mathbb E^-[\eta]\right) \leq
-\varepsilon + \liminf_{\eta\to 0}\left(F(\varphi^-;D^{ext}_\eta)-F(\varphi_\eta^-;D^{ext}_\eta)\right).
\end{equation*}
The lemma will be proven once we show that
\begin{equation}\label{eq:limFeta}
\limsup_{\eta\to 0}\left(F(\varphi^-;D^{ext}_\eta)-F(\varphi_\eta^-;D^{ext}_\eta)\right) \leq 0.
\end{equation}
To this end we may, consistently with its boundary conditions, extend $\varphi_\eta^-$ to $D$ by setting
\begin{equation*}
\varphi_\eta^-=\phi_0^-\quad\text{ in }D^{int}_\eta.
\end{equation*}
We also introduce a parameter $\nu<\eta/2$. We have
\begin{align*}
F(\varphi^-;D^{ext}_\eta)-F(\varphi_\eta^-;D^{ext}_\eta)
\leq F(\varphi^-;D^{ext}_\nu)-F(\varphi_\eta^-;D^{ext}_\nu) + C U(\eta),
\end{align*}
where 
\begin{equation*}
U(\eta) =\norm{\varphi^--\phi_0^-}^2_{L^\infty(D^{int}_\eta)} + \norm{\nabla\varphi^--\nabla \phi_0^-}^2_{L^2(D^{int}_\eta)}\to 0\quad\text{as }\eta\to 0.
\end{equation*}
Next we modify $\varphi_\eta^-$ in order to use the minimizing property of $\varphi^-$ in $D^{ext}_\nu$. Similarly to the above definition of $\xi_\eta$, we set
\begin{align*}
\tilde \varphi_\nu & = \mu \varphi_\eta^- + (1-\mu)\varphi^-,\\
\mu & = \mu(r) = \begin{cases}
1 & \text{ for }r>2 \nu,\\
0 & \text{ for }r<\nu,\\
\frac 1{\ln 2}\ln\frac{r}{\nu} &\text{ for }\nu<r<2\nu,
\end{cases}
\end{align*}
so that $\tilde\varphi_\nu=\varphi_\eta^-$ in $D^{ext}_{2\nu}$, and $\tilde\varphi_\nu =\varphi^-$ on $\partial D^{ext}_\nu \cap \lbrace \rho>0\rbrace$. Moreover, since $\varphi_\eta^-=\phi_0^-$ in $D^{int}_{2\nu}$, we have
\begin{align*}
&F(\varphi_\eta^-;D^{ext}_\nu)-F(\tilde\varphi_\nu;D^{ext}_\nu)\\
&\leq C\left( \norm{\varphi^--\phi_0^-}^2_{L^\infty(D^{int}_{2\nu})} + \norm{\nabla\varphi^--\nabla \phi_0^-}^2_{L^2(D^{int}_{2\nu})}\right)\\
&\leq C U(\eta).
\end{align*}
We deduce
\begin{align*}
F(\varphi^-;D^{ext}_\eta)-F(\varphi_\eta^-;D^{ext}_\eta) 
&\leq F(\varphi^-;D^{ext}_\nu) -F(\tilde\varphi_\nu;D^{ext}_\nu) + C U(\eta)\\
&\leq C U(\eta).
\end{align*}
The last inequality holds because $\tilde\varphi_\nu=\varphi^-$ on $\partial D^{ext}_\nu\cap\lbrace\rho>0\rbrace$ and $\varphi^-$ is minimizing in $D^{ext}_\nu$. This obviously implies \eqref{eq:limFeta}.
\end{proof}

We would like to use Lemma~\ref{l:compareEtau} to show that $\tau$ must be $+1$. From now on we assume that $\tau=-1$. We will then construct a map $P_\xi\in\mathcal H_{sym}$ with lower energy than $Q_\xi$, hence contradicting the minimality of $Q_\xi$ and proving that $\tau=+1$ and $Z=\emptyset$.  
We introduce the notations
\begin{align*}
n_\delta^\pm &= (\cos\varphi_\delta^\pm,0,\sin\varphi_\delta^\pm),\\
n_0^\pm&=(\cos\phi_0^\pm,0,\sin\phi_0^\pm),
\end{align*} 
where $\varphi_\delta^\pm$ are the minimizers corresponding to the minimization problems $\mathbb E^\pm[\delta]$. Note in particular that $n_\delta^\pm =n_0^\pm$ for $r=\delta$.
 
First we show that we may, without messing too much with the energy of $\widetilde Q_\xi$ inside $D^{int}_\delta$, replace it with a map that equals $n_0^-$ on $D\cap\partial D^{ext}_\delta$.

\begin{lemma}\label{l:Rxi}
Consider $\widetilde R_\xi\colon D^{int}_{\delta}\to\mathcal S_0$ minimizing $\widetilde E_\xi(\cdot;D^{int}_\delta)$ among all maps $R$ with the boundary constraints
\begin{align*}
R & = e_r\otimes e_r -\frac 13 I\quad\text{for }\rho^2+z^2=1,\\
R & = n_0^-\otimes n_0^- -\frac 13 I\quad\text{for }r=\delta,\\
R & = S R S^t\quad\text{for }z=0.
\end{align*}
Then we have that 
\begin{equation*}
\widetilde E_\xi(\widetilde R_\xi;D^{int}_\delta)\leq \widetilde E_\xi(\widetilde Q_\xi;D^{int}_\delta) + \sigma_1(\delta,\xi) + \zeta_1(\delta),
\end{equation*}
where $\zeta_1(\delta)\to 0$ as $\delta\to 0$, and $\sigma_1(\delta,\xi)\to 0$ as $\xi\to 0$ for all fixed $\delta>0$.
\end{lemma}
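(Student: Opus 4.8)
Since $\widetilde R_\xi$ minimizes $\widetilde E_\xi(\cdot;D^{int}_\delta)$ with the prescribed constraints, it suffices to produce an admissible competitor $\widehat R_\xi$ with $\widetilde E_\xi(\widehat R_\xi;D^{int}_\delta)\le \widetilde E_\xi(\widetilde Q_\xi;D^{int}_\delta)+\sigma_1(\delta,\xi)+\zeta_1(\delta)$. The plan is to keep $\widehat R_\xi=\widetilde Q_\xi$ on the smaller half-disc $D^{int}_{\rho_{in}}$, $\rho_{in}=(1-\lambda)\delta$, and to replace $\widetilde Q_\xi$ by a transition layer in the thin collar $\{\rho_{in}<r<\delta\}\cap D$ (in the polar coordinates $(r,\theta)$ around $(1,0)$), so that $\widehat R_\xi=n_0^-\otimes n_0^- -\frac13 I$ on $\{r=\delta\}$. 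Because $\widetilde E_\xi\ge 0$ on $D^{int}_{\rho_{in}}$, it is then enough to show $\widetilde E_\xi(\widehat R_\xi;\{\rho_{in}<r<\delta\}\cap D)\le\sigma_1(\delta,\xi)+\zeta_1(\delta)$. A convenient device to handle the colloid boundary $\{\theta=\theta_0(r)\}$ inside the collar is to extend $\widetilde Q_\xi$ and $n_0^-\otimes n_0^- -\frac13 I$ slightly across $\{\rho^2+z^2=1\}$ by the fixed smooth tensor $x\mapsto e_{x/|x|}\otimes e_{x/|x|}-\frac13 I$ (which both maps equal on the colloid), turning the collar into a genuine annular sector; interpolating radially then automatically preserves the colloid constraint (an interpolation of two maps that agree there is that map) and the mirror constraint on $\{\theta=0\}$ (where $n_0^-(\cdot,0)=-e_3$, $\widetilde Q_\xi$ is $S$-invariant, and the nearest-point projection $\pi$ onto $\mathcal U_\star$ and the geodesic map on $\mathcal U_\star$ are $S$-equivariant).

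\textbf{The transition layer.} In the collar we use the two-step interpolation already exploited in Lemma~\ref{l:H1conv} (and \cite[Lemma~B.2]{anisotrop}): on an inner sublayer we interpolate linearly between $\widetilde Q_\xi(\rho_{in},\theta)$ and $\pi(\widetilde Q_\xi(\rho_{in},\theta))\in\mathcal U_\star$ (valid since $f(\widetilde Q_\xi)$ is small on the collar for $\xi$ small), and on an outer sublayer we move along the $\mathcal U_\star$-geodesic from $\pi(\widetilde Q_\xi(\rho_{in},\theta))$ to $(n_0^-\otimes n_0^- -\frac13 I)(\delta,\theta)$. This controls the potential: since the interpolant stays on the segment toward $\mathcal U_\star$ on the first sublayer and inside $\mathcal U_\star$ on the second, $f(\widehat R_\xi)\lesssim\dist^2(\widehat R_\xi,\mathcal U_\star)\le\dist^2(\widetilde Q_\xi(\rho_{in},\cdot),\mathcal U_\star)\lesssim f(\widetilde Q_\xi(\rho_{in},\cdot))$ by \eqref{eq:fdistU*} (and $f(\widehat R_\xi)=0$ on the second sublayer), so $\xi^{-2}\int_{\mathrm{collar}}f(\widehat R_\xi)$ is dominated by $\xi^{-2}$ times the integral of $f(\widetilde Q_\xi)$ over a small annulus away from $\mathcal C$, which $\to0$ as $\xi\to0$ by Lemma~\ref{l:H1conv}. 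On the collar $\rho\approx1$ and $\Xi[\widehat R_\xi]\lesssim1$, so the $\Xi$-term contributes only $O(\lambda\delta^2)$.

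\textbf{The Dirichlet term.} Its size is governed by $\epsilon_\xi(\delta)=\|\widetilde Q_\xi-\widetilde Q_\star\|_{L^\infty(\mathrm{collar})}$, which $\to0$ as $\xi\to0$ for fixed $\delta$ by the $C^{1,\alpha}_{loc}$ convergence away from $\mathcal C\cup Z$, and by $\zeta^{(0)}(\delta)=\|\widetilde Q_\star-(n_0^-\otimes n_0^- -\tfrac13 I)\|_{L^\infty(D^{int}_\delta)}$, which $\to0$ as $\delta\to0$: Lemma~\ref{l:estimphi0} gives $\|\varphi-\tau\varphi_0\|_{L^\infty(D^{int}_\delta)}\to0$, while from \eqref{eq:phi0pm}, \eqref{eq:varphi0} a direct computation yields $\phi_0^- -\tau\varphi_0=-(\lambda(r)+1)\theta=O(r)$ on $D^{int}_\delta$. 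With $\kappa:=\epsilon_\xi(\delta)+\zeta^{(0)}(\delta)+C\lambda\delta$, the radial derivative of $\widehat R_\xi$ is $\lesssim(\lambda\delta)^{-1}\kappa$, giving radial energy $\lesssim\kappa^2/\lambda$; the tangential derivative is $\lesssim|\partial_\theta\widetilde Q_\xi(\rho_{in},\theta)|+C$, giving tangential energy $\lesssim\lambda\int_0^{\theta_0(\rho_{in})}|\partial_\theta\widetilde Q_\xi(\rho_{in},\theta)|^2\,d\theta$. The essential input is that this last "winding energy" stays of order $O(1)$ on slices near $\mathcal C$: this follows from \eqref{eq:bound_dn}, i.e. $\int_0^1\bigl|\int_0^{\theta_0(r)}|\partial_\theta\widetilde Q_\star|^2d\theta-\pi\bigr|\frac{dr}{r}<\infty$, together with the strong $H^1$ convergence $\widetilde Q_\xi\to\widetilde Q_\star$ on cross-sections, so that for $\delta$ small a Fubini argument furnishes $\rho_{in}\in[(1-\lambda)\delta,(1-\tfrac\lambda2)\delta]$ with $\int_0^{\theta_0(\rho_{in})}|\partial_\theta\widetilde Q_\xi(\rho_{in},\theta)|^2d\theta\le\pi+1$ once $\xi$ is small. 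Choosing $\lambda=\lambda(\delta)\to0$ slowly—e.g. $\lambda=\bigl(\zeta^{(0)}(\delta)+\int_0^\delta|\int_0^{\theta_0(r)}|\partial_\theta\widetilde Q_\star|^2-\pi|\frac{dr}{r}\bigr)^{1/2}$—all the above contributions split into a term that $\to0$ as $\xi\to0$ for fixed $\delta$ (call it $\sigma_1(\delta,\xi)$) and a term that $\to0$ as $\delta\to0$ (call it $\zeta_1(\delta)$), which is exactly the claimed bound.

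\textbf{Main obstacle.} The delicate point is precisely the last step: forcing the transition-layer excess to be genuinely $o(1)$ as $\xi\to0$ and then $\delta\to0$, rather than merely $O(1)$. Two difficulties must be reconciled. First, under a naive linear interpolation the potential would blow up like $\epsilon_\xi^2/\xi^2$; this is defused by the two-step construction, which bounds $f(\widehat R_\xi)$ pointwise by $f(\widetilde Q_\xi)$. Second, the Dirichlet term cannot be absorbed into $\widetilde E_\xi(\widetilde Q_\xi;\mathrm{collar})$ with a constant larger than $1+o(1)$ without producing an $O(\ln\frac1\delta)$ error, which is why the collar is taken thin and its inner arc placed at a "good" radius where the winding energy of $\widetilde Q_\xi$ is $O(1)$—a property available only through the refined estimate \eqref{eq:bound_dn}. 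Arranging that the subsequence in $\xi$, the admissible values of $\delta$, the width $\lambda$ and the inner radius $\rho_{in}$ can all be chosen compatibly is routine but requires care.
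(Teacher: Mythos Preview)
Your argument is essentially correct, but it takes a genuinely different route from the paper's own proof. The paper builds the competitor in \emph{two} nested annuli: first, on an inner annulus $\{\hat\delta<r<\delta/2\}$ (with $\hat\delta$ chosen by Fubini as in \eqref{eq:QTdelta}), it reuses the interpolation machinery from Lemma~\ref{l:H1conv} to pass from $\widetilde Q_\xi$ to $\widetilde Q_\star$, producing an error $\sigma(\delta,\xi)\to 0$; then, on the outer annulus $\{\delta/2<r<\delta\}$, it interpolates the \emph{phases} $\varphi_\star$ and $\phi_0^-$ and uses Lemma~\ref{l:estimphi0} to compare $F(\hat\phi)$ to $F(\varphi_\star)$, producing the error $\zeta(\delta)\to 0$. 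Crucially, in each annulus the paper only bounds the \emph{difference} between the competitor's energy and $\widetilde E_\xi(\widetilde Q_\xi)$ there, never the competitor's energy itself. By contrast, you use a single thin collar $\{(1-\lambda)\delta<r<\delta\}$, discard $\widetilde E_\xi(\widetilde Q_\xi;\text{collar})\ge 0$, and bound the collar energy of $\widehat R_\xi$ \emph{absolutely}. This forces you to invoke the refined slice estimate \eqref{eq:bound_dn} to find a good radius $\rho_{in}$ where the tangential winding energy of $\widetilde Q_\xi$ is $O(1)$---an ingredient the paper's comparison strategy entirely avoids. Your extension device across $\{\rho^2+z^2=1\}$ to handle the colloid boundary is a nice alternative to the paper's tapered slice $S_\lambda$. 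Both approaches ultimately rest on Lemma~\ref{l:estimphi0} (your $\zeta^{(0)}(\delta)\to 0$) and the $H^1/C^{1,\alpha}$ convergence of Lemma~\ref{l:H1conv} (your $\epsilon_\xi(\delta)\to 0$); the paper's is somewhat cleaner because it never needs a good slice, while yours is more direct but requires the simultaneous Fubini selection of $\rho_{in}$ (good for both $\int|\partial_\theta\widetilde Q_\xi|^2$ and $\xi^{-2}\int f(\widetilde Q_\xi)$ on the slice) that you flag as ``routine but requires care.''
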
 

\begin{proof}[Proof of Lemma~\ref{l:Rxi}]
 We construct a test configuration $R_\xi$ in $D^{int}_{\delta}$ and evaluate its energy $\widetilde E_\xi$. We systematically denote by $\sigma(\delta,\xi)$ (resp. $\zeta(\delta)$) functions that tend to 0 as $\xi\to 0$ for any fixed $\delta$ (resp. $\delta\to 0$), although they may change from one line to another.
 
By Fubini's theorem, we may choose $\hat\delta\in (\frac\delta 4,\frac\delta 3)$ such that (possibly along a subsequence)
 \begin{equation*}
 \widetilde E_\xi(\widetilde Q_\xi;D\cap\partial D^{ext}_{\hat \delta})+\widetilde E_\star(\widetilde Q_\star;D\cap\partial D^{ext}_{\hat\delta})\lesssim\frac 1\delta \left(\widetilde E_\xi(\widetilde Q_\xi;D^{ext}_{\frac\delta 4})+\widetilde E_\star(\widetilde Q_\star;D^{ext}_{\frac\delta 4}) \right).
 \end{equation*}
 Arguing exactly as in the proof of \eqref{eq:overlineQ} in Lemma~\ref{l:H1conv}, this enables us to construct $R_\xi$ in $D^{ext}_{\hat\delta}\setminus D^{ext}_{\delta/2}$ such that $R_\xi$ satisfies the boundary conditions of $\widetilde R_\xi$ for $z=0$ and $\rho^2+z^2=1$, and 
 \begin{align*}
 R_\xi & =\widetilde Q_\xi \quad\text{for }r=\hat\delta,\qquad R_\xi =\widetilde Q_\star\quad\text{for }r=\frac\delta 2,\\
 \text{and }\sigma(\delta,\xi)&=\widetilde E_\xi(R_\xi;D^{ext}_{\hat\delta}\setminus D^{ext}_{\delta/2})-\widetilde E_\star(\widetilde Q_\star;D^{ext}_{\hat\delta}\setminus D^{ext}_{\delta/2}) \longrightarrow 0\quad\text{as }\xi\to 0,
 \end{align*}
 for any fixed $\delta$. In $D^{int}_{\hat\delta}$ we set $R_\xi=\widetilde Q_\xi$, so that the above estimate combined with Lemma~\ref{l:H1conv} implies
 \begin{equation*}
 \widetilde E_\xi(R_\xi;D^{int}_{\frac\delta 2})\leq \widetilde E_\xi(\widetilde Q_\xi;D^{int}_{\frac\delta 2})+\sigma(\delta,\xi).
 \end{equation*}
Finally, we interpolate in $\mathcal U_\star$ between $\widetilde Q_\star=\widetilde n_\star\otimes n_\star-\frac13 I$ and $Q_0^-:=n_0^-\otimes n_0^- -\frac13 I$ to define $R_\xi$ in the remaining region $D^{ext}_{\delta/2}\setminus D^{ext}_{\delta}$.  
This we do via the phase variables, $\varphi_\star$ and $\phi_0^-$, setting 
\[ 
 \hat \phi(r,\theta)=  \frac{2}{\delta}\left(r-\frac{\delta}{ 2}\right) \varphi_\star
                         +\frac{2}{\delta}\left(\delta-r\right) \phi_0^-, 
                            \qquad  \frac{\delta}{2}<r<\delta, \ 0<\theta< \theta_0(r).
\]
This defines a director $\hat n:= (\cos\hat \phi, 0, \sin\hat \phi)$ and an associated uniaxial $Q$-tensor
$R_\xi=\hat n\otimes\hat n -\frac13 I$. 
In this way, $R_\xi$ will be continuous in $D^{int}_\delta$ and satisfy each of the desired conditions on $\partial (D^{int}_\delta)$.  
As $R_\xi\in\mathcal U_\star$ in this region,  $f(R_\xi)=0$ and moreover,
\[  \widetilde E( R_\xi;   D^{ext}_{\delta/2}\setminus D^{ext}_{\delta}) = 2 \hat E(\hat n; D^{ext}_{\delta/2}\setminus D^{ext}_{\delta}) = 2 F(\hat\phi; D^{ext}_{\delta/2}\setminus D^{ext}_{\delta}).
\]
Thanks to Lemma~\ref{l:estimphi0} and the explicit form of $\varphi_0$ \eqref{eq:varphi0} and $\phi_0^-$ \eqref{eq:phi0pm}, we have
\begin{equation}\label{phases}
\| \varphi_\star - \phi_0^-\|_{H^1\cap L^\infty(D^{int}_\delta)} \longrightarrow 0, \qquad\text{as }\delta\to 0,
\end{equation}
and use this fact to estimate the energy of $R_\xi$ in $D^{ext}_{\delta/2}\setminus D^{ext}_{\delta}$.
  For instance, we have
\begin{align*}
\partial_r \hat\phi (r,\theta) &= 
   \frac{2}{\delta}\left( r-\frac{\delta}{ 2}\right) \partial_r\varphi_\star
                         +\frac{2}{\delta}\left(\delta-r\right) \partial_r\phi_0^-
                            + \frac{2}{ \delta} (\varphi_\star-\phi_0^-) \\
                    &= \partial_r \phi_0^- + \left\{\frac{2}{\delta}\left(r-\frac{\delta}{ 2}\right)  
                       [\partial_r \phi_0^- - \partial_r \varphi_\star] + \frac{2}{ \delta} (\varphi_\star-\phi_0^-)\right\},
\end{align*}
and similarly,
\[   \partial_\theta \hat\phi (r,\theta) = \partial_\theta \phi_0^- + \left\{\frac{2}{\delta}\left(r-\frac{\delta}{ 2}\right)  [\partial_\theta \phi_0^- - \partial_\theta \varphi_\star] \right\}.
\]
The estimate \eqref{phases} ensures that the bracketed terms on the right-hand side of each of the above equations tend to zero in $L^2(D^{ext}_{\delta/2}\setminus D^{ext}_{\delta})$ as $\delta\to 0$.  As a consequence we have
\begin{align*}   
\int_{D^{ext}_{\delta/2}\setminus D^{ext}_\delta} |\nabla\hat\phi|^2 \, \rho\, d\rho\, dz
   &\le \int_{D^{ext}_{\delta/2}\setminus D^{ext}_\delta} |\nabla\phi_0^-|^2 \, \rho\, d\rho\, dz
       + \zeta(\delta) \\
   &\le \int_{D^{ext}_{\delta/2}\setminus D^{ext}_\delta} |\nabla\varphi_*|^2 \, \rho\, d\rho\, dz
       + \zeta(\delta),
\end{align*}
where $\zeta(\delta)\to 0$ as $\delta\to 0$, and we used again \eqref{phases} for the last inequality.
  As 
\[  
\int_{D^{ext}_{\delta/2}\setminus D^{ext}_\delta} \frac{\hat n_1^2}{\rho} d\rho\, dz = O(\delta^2),  
\]
 we deduce that
\[  F(\hat \phi; D^{ext}_{\delta/2}\setminus D^{ext}_\delta) \le F(\varphi_\star;D^{ext}_{\delta/2}\setminus D^{ext}_\delta) + \zeta(\delta).  
\]
Finally, 
\begin{align*}
  \widetilde E(R_\xi; D^{ext}_{\delta/2}\setminus D^{ext}_\delta) &= 2 F(\hat \phi; D^{ext}_{\delta/2}\setminus D^{ext}_\delta) \le 2[F(\varphi_*) + \zeta(\delta)] \\
    &\le \widetilde E(\widetilde Q_\xi; D^{ext}_{\delta/2}\setminus D^{ext}_\delta) + \zeta(\delta) + \sigma(\delta,\xi),
\end{align*}
by Lemma~\ref{l:H1conv}.  Combined with the above estimate in $D^{int}_{\delta/2}$ this yields
\begin{equation*}
\widetilde E(R_\xi; D^{int}_\delta)\leq \widetilde E (\widetilde Q_\xi; D^{int}_\delta) + \zeta(\delta) + \sigma(\delta,\xi),
\end{equation*}        
and since $\widetilde R_\xi$ minimizes $\widetilde E_\xi$ with the same boundary conditions as $R_\xi$ this completes the proof of Lemma~\ref{l:Rxi}.    
\end{proof}

The final step consists in proving that we may \enquote{transform} the boundary conditions on $D\cap\partial D^{ext}_\delta$ from $n_0^-$ to $n_0^+$ without increasing the energy too much. 
This establishes the crucial core energy estimate mentioned in the Introduction:  that the energy of a positively or negatively charged line defect is the same up to $o(1)$.

\begin{lemma}\label{l:Pxi}
Consider $\widetilde P_\xi\colon D^{int}_{\delta}\to\mathcal S_0$  minimizing $\widetilde E(\cdot;D^{int}_{\delta})$ among all maps $P$ with the boundary constraints
\begin{align*}
P & = e_r\otimes e_r -\frac 13 I\quad\text{for }\rho^2+z^2=1,\\
P & = n_0^+\otimes n_0^+ -\frac 13 I\quad\text{for }r=\delta,\\
P& = S P S^t\quad\text{for }z=0.
\end{align*}
Then 
\begin{equation*}
\widetilde E_\xi(\widetilde P_\xi;D^{int}_{\delta}) \leq \widetilde E(\widetilde R_\xi;D^{int}_\delta) + \zeta_3(\delta),\\
\end{equation*}
where $\zeta_3(\delta)\to 0$ as $\delta\to 0$, and $\sigma_3(\delta,\xi)\to 0$ as $\xi\to 0$ for all fixed $\delta>0$. 
\end{lemma}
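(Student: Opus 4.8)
\medskip
\noindent\emph{Proof strategy.}
The plan is to build an admissible competitor $P$ for the minimization problem defining $\widetilde P_\xi$ out of $\widetilde R_\xi$, by reflecting $\widetilde R_\xi$ in the target across the horizontal plane on most of $D^{int}_\delta$ and repairing the anchoring in a thin wedge adjacent to the particle. The two problems differ only in the trace on $\{r=\delta\}$, namely $n_0^+$ versus $n_0^-$. Writing $\phi_0^\tau(r,\theta)=\tau\tfrac\pi2-\lambda_\tau(r)\theta$ with $\lambda_\tau(r)\to\tau$ as $r\to 0$, and recalling that conjugation by $S=\mathrm{diag}(1,1,-1)$ turns the director phase $\psi$ into $-\psi$, the $Q$-tensor $S(n_0^-\otimes n_0^--\tfrac13 I)S^t$ has director phase $-\phi_0^-=\tfrac\pi2+\lambda_-(r)\theta$, which differs from $\phi_0^+$ by at most $2\theta_1(r)=O(r)$ uniformly in $\theta$. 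The map $\widetilde Q\mapsto S\widetilde Q S^t$ leaves $\widetilde E_\xi$ invariant (the Frobenius norm, $f$, and $\Xi$ are all unchanged, as one checks directly from the formula for $\Xi$) and preserves the mirror constraint $\widetilde Q=S\widetilde Q S^t$ on $\{z=0\}$; the only obstruction is that it destroys the anchoring on $\{\rho^2+z^2=1\}$, since $Se_r=(\rho,0,-z)$. Hence we reflect only away from the particle.

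Fix $\beta\in(1,2)$ and $\lambda>0$ small, and let $W=D^{int}_\delta\cap\{\rho\le 1+\lambda z^\beta\}$, the thin wedge between the particle and the curve $\rho=1+\lambda z^\beta$ (tangent to $\partial B$ at $(1,0)$), with $\Gamma_i=\partial W\cap\{\rho=1+\lambda z^\beta\}$, $\Gamma_p=\partial W\cap\{\rho^2+z^2=1\}$ and $\Gamma_o=\partial W\cap\{r=\delta\}$. On $D^{int}_\delta\setminus W$ one sets $P=S\widetilde R_\xi S^t$, except in a thin radial collar near $\{r=\delta\}$ (outside $W$) where one interpolates through phases from $S\widetilde R_\xi S^t$ at the inner radius of the collar to the prescribed $n_0^+$ at $r=\delta$; using the $\widetilde R_\xi$-analogue of Lemmas~\ref{l:n3monot}--\ref{l:estimphi0} (phase of $\widetilde R_\xi$ close to $\phi_0^-$ in $D^{int}_\delta$), one may choose this collar so that its two traces are $o(1)+O(\delta)$ apart and carry comparable angular energy, making the collar correction cost at most $o(1)$. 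One must also verify the topology: on $\{r=\delta\}\setminus\Gamma_o$ the phase $-\phi_0^-$ runs, modulo $\pi$, over the same interval and with the same orientation as $\phi_0^+$, so $P_{\lfloor\partial D^{int}_\delta}$ is homotopic in $\mathcal U_\star$ to the trace required in the $\widetilde P_\xi$ problem --- the reflection does not inject a spurious half-winding. All in all, $\widetilde E_\xi(P;D^{int}_\delta\setminus W)\le\widetilde E_\xi(\widetilde R_\xi;D^{int}_\delta\setminus W)+\sigma_3(\delta,\xi)+\zeta_3(\delta)$.

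It remains to fill $W$, matching $e_r\otimes e_r-\tfrac13 I$ on $\Gamma_p$, $S\widetilde R_\xi S^t$ on $\Gamma_i$, and $n_0^+\otimes n_0^+-\tfrac13 I$ on $\Gamma_o$. By Fubini, choose $\lambda'\in[\lambda/2,\lambda]$ so that the trace of $\widetilde R_\xi$ on $\{\rho=1+\lambda'z^\beta\}$ has $H^1$ energy $\lesssim\lambda^{-1}\widetilde E_\xi(\widetilde R_\xi;W)$; by the $\eta$-ellipticity of Lemma~\ref{l:pointwiseboundpotential} applied to $\widetilde R_\xi$, $f(\widetilde R_\xi)\le\eta$ near $\Gamma_i$, so $\pi(\widetilde R_\xi)$ is defined there. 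Fill $W$ by the two-step interpolation of Lemma~\ref{l:H1conv} (linearly from $S\widetilde R_\xi S^t$ to its projection onto $\mathcal U_\star$, then geodesically within $\mathcal U_\star$ to the anchoring); since $W$ is thin and the anchoring is explicit, $\widetilde E_\xi(P;W)\lesssim\widetilde E_\xi(\widetilde R_\xi;W)+|W|+o(1)$ as $\delta\to 0$. The key point --- this is where Lemma~\ref{l:finelowerboundhalfannulus} enters --- is that $\widetilde E_\xi(\widetilde R_\xi;W)$ is negligible: by Lemma~\ref{l:Rxi} and the upper bound Theorem~\ref{thm:main}$(i)$, $\widetilde R_\xi$ obeys a matching upper bound on $D^{int}_\delta$, so running the lower-bound analysis of Theorem~\ref{thm:main}$(ii)$ and Remark~\ref{r:lowerbound} for $\widetilde R_\xi$ (flattening $\{\rho^2+z^2=1\}$ and invoking Lemma~\ref{l:finelowerboundhalfannulus} with the curve $\{\rho=1+\lambda z^\beta\}$) forces its energy to concentrate in $D^{int}_\delta\cap\{\rho\ge 1+\lambda z^\beta\}$ and leaves $\widetilde E_\xi(\widetilde R_\xi;W)\le C\lambda\delta^{\beta-1}/(\beta-1)+C\delta+o(1)$, which $\to 0$ as $\delta\to 0$ for $\lambda,\beta$ fixed (equivalently, the $\widetilde R_\xi$-analogue of Lemma~\ref{l:estimphi0} gives phase $\approx\phi_0^-$ in $W$, whence the same estimate by direct computation).

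Assembling the two regions and using the energy-invariance of $S\cdot S^t$ gives
\[
\widetilde E_\xi(P;D^{int}_\delta)=\widetilde E_\xi(P;D^{int}_\delta\setminus W)+\widetilde E_\xi(P;W)\le\widetilde E_\xi(\widetilde R_\xi;D^{int}_\delta)+\sigma_3(\delta,\xi)+\zeta_3(\delta),
\]
and, $P$ being admissible for the problem defining $\widetilde P_\xi$, minimality yields the lemma. I expect the wedge step to be the main obstacle: one must simultaneously keep the potential term $\xi^{-2}\int_W f(P)$ bounded --- forcing the project-then-geodesically-interpolate scheme and the use of $\eta$-ellipticity near $\Gamma_i$ --- and prove that $\widetilde E_\xi(\widetilde R_\xi;W)\to 0$, which is exactly the rigidity supplied by the sharp boundary lower bound of Lemma~\ref{l:finelowerboundhalfannulus} together with the matching upper bound inherited from Lemma~\ref{l:Rxi}. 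The $O(\delta)$ mismatch between $Sn_0^-$ and $n_0^+$ on $\{r=\delta\}$ and the pinching of $W$ at the corner $(1,0)$ are technical nuisances that do not affect the leading order.
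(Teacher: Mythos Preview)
Your overall strategy --- reflect $\widetilde R_\xi$ in the target via $Q\mapsto SQS^t$ and then repair the anchoring in a thin region adjacent to the particle --- is exactly the mechanism the paper uses. The collar step near $\{r=\delta\}$ is fine: $-\phi_0^-$ and $\phi_0^+$ really do differ by $O(\delta)$, as you note. The gap is in your treatment of the wedge $W$.

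You need the competitor's energy in $W$ to exceed $\widetilde E_\xi(\widetilde R_\xi;W)$ by at most $o(1)$, and you achieve this by asserting that $\widetilde E_\xi(\widetilde R_\xi;W)\to 0$ as $\delta\to 0$. But the tools you invoke do not give this. Lemma~\ref{l:finelowerboundhalfannulus} and Remark~\ref{r:lowerbound} yield a lower bound on $D^{int}_\delta\setminus W$ that matches the upper bound on $D^{int}_\delta$ \emph{only up to an additive $O(1)$} (specifically, up to $CK+C(1+\xi/r)+C\lambda/(\beta-1)$ with $K=O(1)$); subtracting, one gets $\widetilde E_\xi(\widetilde R_\xi;W)=O(1)$, not $o(1)$. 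Your alternative justification via ``the $\widetilde R_\xi$-analogue of Lemma~\ref{l:estimphi0}'' does not apply either: that lemma is about the $\mathcal U_\star$-valued limit map, while $\widetilde R_\xi$ has a core and no global phase. Moreover, your filling of $W$ has to match $S\widetilde R_\xi S^t$ on $\Gamma_i$, which is a different trace from $\widetilde R_\xi|_{\Gamma_i}$; there is no reason its energy should be bounded by $\widetilde E_\xi(\widetilde R_\xi;W)$ with constant $1$ rather than some $C>1$, and any such constant larger than $1$ forces you back to needing $\widetilde E_\xi(\widetilde R_\xi;W)=o(1)$.

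The paper avoids this issue by \emph{not} discarding $\widetilde R_\xi$ in the wedge. Instead it introduces a bi-Lipschitz change of variables $\Phi$ with distortion $1+O(\delta^{1/6})$ that maps a slightly smaller domain $X_\delta\subset D^{int}_\delta$ onto all of $D^{int}_\delta$, while keeping an even smaller core region $Y_\delta$ fixed. One then sets $\widehat P_\xi=S\widetilde R_\xi S\circ\Phi$ on $X_\delta$. The cost of the reparametrization is at most $C\delta^{1/6}\cdot\widetilde E_\xi(\widetilde R_\xi;D^{int}_\delta\setminus Y_\delta)$, and here the coarse bound $\widetilde E_\xi(\widetilde R_\xi;D^{int}_\delta\setminus Y_\delta)\le C\ln(1/\delta)$ (which \emph{is} what Remark~\ref{r:lowerbound} delivers, via $Y_\delta\supset\{r\le\delta/2\}\cap\{\rho\ge 1+z^{4/3}\}$) suffices, since $\delta^{1/6}\ln(1/\delta)\to 0$. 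The thin leftover $D^{int}_\delta\setminus X_\delta$ is then filled by explicit phase interpolations of cost $O(\delta^{1/2})$. The point is that a multiplicative $(1+o(1))$ distortion of the full energy is much cheaper than an additive replacement of an $O(1)$ chunk.
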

\begin{proof}[Proof of Lemma~\ref{l:Pxi}]
We define a map $\widehat P_\xi$ satisfying the boundary condition of $\widetilde P_\xi$ and an adequate upper bound on its energy. We do this in two steps: first we  define $\widehat P_\xi$ in the domain
\begin{equation*}
X_\delta:=\lbrace 0<\theta<\frac\pi 2 -r^{\frac 12}\rbrace \cap \lbrace 0<r<\delta-\delta^{\frac 32}\rbrace,
\end{equation*}
as the reflected map $S\widetilde R_\xi S$, appropriately rescaled to \enquote{fit} into this smaller domain, and then define $\widehat P_\xi$ on the remaining part by interpolating in $\mathcal U_\star$ between the boundary values of $S \widetilde R_\xi S$ and the boundary values of $\widetilde P_\xi$.

For the first step we start by defining a bi-Lipschitz change of variables which transforms $X_\delta$ into 
\begin{equation*}
D^{int}_\delta =\lbrace 0<\theta <\frac\pi 2 + \arcsin\frac r2 \rbrace \cap \lbrace 0<r<\delta\rbrace,
\end{equation*}
and keeps the subdomain
\begin{equation*}
Y_\delta:=\lbrace 0<\theta<\frac\pi 2 -r^{\frac 13}\rbrace \cap \lbrace 0<r<\frac\delta 2\rbrace
\end{equation*}
fixed. Explicitly, we set
\begin{align*}
\Phi(r,\delta)&=(r+g_1(r),\theta + g_2(r,\theta),\\
g_1,g_2 &\equiv 0\qquad\text{in }Y_\delta,\\
g_1(r)&=2\delta^{\frac 12}\frac{1}{1-2\delta^{\frac 12}}(r-\frac \delta 2)\qquad\text{for }\frac \delta 2 < r<\delta-\delta^{\frac 23},\\
g_2(r,\theta)&=r^{\frac 16}\frac{1+r^{-\frac 12}\arcsin\frac r2}{1-r^{\frac 16}}(\theta -\frac\pi 2 + r^{\frac 13})\qquad\text{for }\frac\pi 2 -r^{\frac 13} <\theta < \frac\pi 2 +\arcsin\frac r2.
\end{align*}
Direct computations show that $\Phi$ is one-to-one from $X_\delta$ into $D^{int}_\delta$, that $\Phi=id$ in $Y_\delta$, and that
\begin{equation*}
\abs{\det(D\Phi)-1} + \abs{g_1'} + \abs{\partial_\theta g_2} + r\abs{\partial_r g_2}\lesssim\delta^{\frac 16}.
\end{equation*}
Hence for any function $u(r,\theta)$, the function $\tilde u=u\circ\Phi$ satisfies
\begin{align*}
\abs{\nabla \tilde u}^2&=\abs{\partial_r \tilde u}^2 +\frac 1{r^2}\abs{\partial_\theta\tilde u}^2\\
&\leq \left[ (1+\abs{g_1'})\abs{\partial_r u}\circ\Phi + \abs{\partial_r g_2}\abs{\partial_\theta u}\circ\Phi\right]^2 + (1+\abs{\partial_\theta g_2})^2\abs{\partial_\theta u}^2\circ\Phi\\
&\leq (1+C\delta^{\frac 16})\abs{\nabla u}^2\circ\Phi,
\end{align*}
for some constant $C>0$. Therefore, setting
\begin{equation*}
\widehat P_\xi =S\widetilde R_\xi S \circ\Phi\qquad\text{in }X_\delta,
\end{equation*}
and recalling that $\Phi$ is the identity in $Y_\delta$, we have
\begin{align*}
\widetilde E_\xi(\widehat P_\xi;X_\delta)&=\widetilde E_\xi(\widehat P_\xi; Y_\delta) + \widetilde E_\xi(\widehat P_\xi;X_\delta\setminus Y_\delta)\\
%& = \widetilde E_\xi(\widetilde R_\xi;Y_\delta) + \widetilde E_\xi(\widehat P_\xi;X_\delta\setminus Y_\delta)\\
&\leq \widetilde E_\xi(\widetilde R_\xi;Y_\delta) + (1+C \delta^{\frac 16} ) \widetilde E_\xi(\widetilde R_\xi;D^{int}_\delta\setminus Y_\delta) \\
&\leq \widetilde E_\xi(\widetilde R_\xi;D^{int}_\delta) + C\delta^{\frac 16}\widetilde E_\xi(\widetilde R_\xi;D^{int}_\delta\setminus Y_\delta).
\end{align*}
Next, note that the proof of upper and lower bound in Sections~\ref{s:upper} and \ref{s:lower} can be adapted to prove similar bounds on $\widetilde R_\xi$. 
Moreover, thanks to Remark~\ref{r:lowerbound} and the inclusion
\begin{align*}
 Y_\delta & \supset \lbrace r \leq \frac\delta 2\rbrace \cap \lbrace \rho\geq 1+ z^{\frac 43}\rbrace,
\end{align*}
 the lower bound can actually be obtained in $Y_\delta$. As a consequence
we have the upper bound
\begin{equation*}
\widetilde E_\xi(\widetilde R_\xi;D^{int}_\delta\setminus Y_\delta)\leq C\ln\frac 1\delta,
\end{equation*}
and  deduce that
\begin{equation}\label{eq:boundPXdelta}
\widetilde E_\xi(\widehat P_\xi;X_\delta)\leq \widetilde E_\xi(\widetilde R_\xi;D^{int}_\delta) + C\delta^{\frac 16}\ln\frac 1\delta
\end{equation}
On $\partial X_\delta\cap\lbrace z=0\rbrace$, $\widehat P_\xi$ satisfies the boundary condition $S\widehat P_\xi S =\widehat P_\xi$, since $\widetilde R_\xi$ satisfies it as well. 

Finally, we define $\widehat P_\xi$ in the region $D^{int}_\delta\setminus X_\delta$ to satisfy the desired boundary conditions, via interpolation in this thin region of width $O(\delta^{3/2})$.  We decompose 
\begin{align*}
D^{int}_\delta\setminus X_\delta &=Z^1_\delta\cup Z^2_\delta\cup Z^3_\delta,
\end{align*}
 using the arcs 
$\{ r=\delta-\delta^{3/2}, \ \theta\in (\theta_0(r),\underline\theta(r))\}$ and 
$\{ \theta=\underline{\theta}(r), \ r\in (\delta-\delta^{3/2}, \delta)\}$, where we denote $\underline{\theta}(r):= \frac{\pi}{2} - r^{1/2}$. Explicitly, we set
\begin{align*}
Z^1_\delta &=\{ \theta\in (\underline\theta(r),\theta_0(r)), \ 0<r<\delta-\delta^{3/2}\},\\
Z^2_\delta &=\{ 0<\theta<\underline{\theta}(r), \ r\in (\delta-\delta^{3/2}, \delta)\},\\
Z^3_\delta &=\{  r\in (\delta-\delta^{3/2},\delta), \ \theta\in (\underline\theta(r), \theta_0(r))\}
\end{align*}
As the boundary data are all taken with values in $\mathcal U_*$, we may define $\widehat P_\xi= \hat n\otimes\hat n -\frac13 I$, $\hat n=(\cos\hat\phi, 0, \sin\hat\phi)$, by specifying its phase $\hat\phi$.   Similarly, we define the boundary data for $R_\xi$ in terms of a director characterized by its phase, $R_\xi|_{\partial X_\delta}= n_R\otimes n_R-\frac13 I$, $n_R=(\cos\varphi, 0 ,\sin\varphi)$, with $\varphi=\underline\varphi(r)$ on $\underline\Gamma=\{ \theta=\underline\theta(r), \ r\in (0,\delta-\delta^{3/2})\}$, (corresponding to $n=Se_r\circ\Phi$,) and 
$\varphi=\overline\varphi(\theta)=-\phi_0^-\circ\Phi$ on $\overline\Gamma=\{ r=\delta-\delta^{3/2}, \ \theta\in (0,\underline\theta(r))\}$.

In $Z^1_\delta=\{ \theta\in (\underline\theta(r),\theta_0(r)), \ 0<r<\delta-\delta^{3/2}\}$, we interpolate in $\theta\in (\underline\theta(r),\theta_0(r))$ for each fixed $r$:  
\begin{equation*}
    \hat \phi(r,\theta) = \frac{\theta_0(r)- \theta}{ \theta_0(r)-\underline\theta(r)} \underline\varphi(r) + 
   \frac{\theta-\underline\theta(r)}{ \theta_0(r)-\underline\theta(r)} \theta_1(r),
\end{equation*}
where we recall that $\theta_1(r)=2\theta_0(r)-\pi$ gives the Dirichlet condition along the circle $\rho^2+z^2=1$.
As $\underline\varphi(r)-\theta_1(r)=O(r)$, we calculate
\begin{align}\label{gradest}  
(\partial_r\hat\phi)^2 + \frac{1}{ r^2}(\partial_\theta\hat\phi)^2 = O(r^{-2}), 
\end{align}
and hence
\begin{equation*}  
\widetilde E_\xi( \widehat P_\xi; Z^1_\delta) = 2F(\hat\phi; Z^1_\delta) \lesssim \delta^{1/2}.
\end{equation*}
In $Z^2_\delta=\{ 0<\theta<\underline{\theta}(r), \ r\in (\delta-\delta^{3/2}, \delta)\}$ we set
\begin{equation*}
 \hat\phi(r,\theta)= \frac{\delta-r}{ \delta^{3/2}} \overline\varphi(\theta) + 
   \frac{r-\delta-\delta^{3/2}}{ \delta^{3/2}} \phi_0^+(\theta,\delta).
\end{equation*}
As the phase difference $|\overline\varphi(\theta)-\phi_0^+(\theta,\delta)|=O(\delta)$, we again may estimate the gradient as in \eqref{gradest} to obtain
\begin{equation*}
  \widetilde E_\xi( \widehat P_\xi; Z^2_\delta) = 2F(\hat\phi; Z^2_\delta) \lesssim \delta^{1/2}.
\end{equation*}
Lastly, we consider the domain $Z^3_\delta=\{  r\in (\delta-\delta^{3/2},\delta), \ \theta\in (\underline\theta(r), \theta_0(r))\}$, for which $\hat\phi$ has already been defined on $\partial Z^3_\delta$ via the previous two steps.  Indeed, $\hat\phi|_{\partial Z^3_\delta}=\frac{\pi}{2}+h_\delta$, for $h_\delta$ Lipschitz continuous, with sup norm of order $\delta$, and $|\partial_\tau h_\delta|=O(\delta^{-1/2})$ on each edge.  
Define $v_\delta$ as the minimizer of the Dirichlet energy $\int_{Z^3_\delta} |\nabla v|^2$ with $v|_{\partial Z^3_\delta}= h_\delta$.  The domain $Z^3_\delta$ is nearly square, with side length $\delta^{3/2}$; indeed, after rescaling lengths by $\delta^{3/2}$, $\delta^{-3/2}Z^3_\delta$ approaches the unit square as $\delta\to 0$.   In particular elliptic estimates give
\[   
\int_{Z^3_\delta} |\nabla v_\delta|^2\, dx \lesssim \|h_\delta\|_{H^{1/2}(\partial Z^3_\delta)}^2
  \lesssim \|\partial_\tau h_\delta\|_{L^2(\partial Z^3_\delta)} 
     \|h_\delta\|_{L^2(\partial Z^3_\delta)}
        \lesssim \delta^2.
\]
Setting $\hat\phi=\frac{\pi}{2}+ v_\delta$ in $Z^3_\delta$, we then have that
\[ 
 \widetilde E_\xi(P_\xi; Z^3_\delta) 
 = 2 F(\hat\phi; Z^3_\delta)
    =2\int_{Z^3_\delta} \left[|\nabla v_\delta|^2 + \frac{\sin^2 v_\delta}{\rho^2} \right]
    =O(\delta^2).  
    \]
Together with the previous two constructions, we have defined $\widehat P_\xi$ in all $D^{int}_\delta$, satisfying the desired boundary conditions, with
\[  \widetilde E_\xi(\widehat P_\xi; D^{int}_\delta) = \widetilde E_\xi(\widehat  P_\xi; X_\delta) + O(\delta^{1/2}) \le  \widetilde E_\xi(\widetilde R_\xi;D^{int}_\delta)
+ O(\delta^{\frac 16}\ln\frac 1\delta),  
\]
by \eqref{eq:boundPXdelta}. Using $\widehat P_\xi$ as a comparison map thus proves Lemma~\ref{l:Pxi}.
\end{proof}

To conclude, we extend $\widetilde P_\xi$ to $D^{ext}_\delta$ by setting 
\begin{equation*}
P_\xi =n_\delta\otimes n_\delta -\frac 13 I\qquad\text{in }D^{ext}_\delta.
\end{equation*}
From the estimates in Lemma~\ref{l:Rxi} and \ref{l:Pxi}  we have that
\begin{align*}
\widetilde E_\xi(\widetilde P_\xi)&\leq \widetilde E_\xi(\widetilde Q_\xi)+ \widetilde E_\xi(\widetilde P_\xi;D^{ext}_\delta)-E_\xi(\widetilde Q_\xi;D^{ext}_\delta) +\sigma_4(\delta,\xi)+ \zeta_4(\delta),
\end{align*}
where $\zeta_4(\delta)\to 0$ as $\delta\to 0$, and $\sigma_4(\delta,\xi)\to 0$ as $\xi\to 0$ for all fixed $\delta>0$. Recalling from the definition of $n_\delta$ that $\widetilde E_\xi(\widetilde P_\xi;D^{ext}_\delta)=2\mathbb E^+[\delta]$, we also have that
\begin{align*}
\frac 12\widetilde E_\xi(\widetilde P_\xi;D^{ext}_\delta)-\frac 12 \widetilde E_\xi(\widetilde Q_\xi;D^{ext}_\delta)&=\mathbb E_+[\delta]-\mathbb E_-[\delta] + \sigma_5(\delta,\xi)+\zeta_5(\delta),\\
\sigma_5(\delta,\xi)&=\frac 12\widetilde E_\star(\widetilde Q_\star;D^{ext}_\delta)-\frac 12 \widetilde E_\xi(\widetilde Q_\xi;D^{ext}_\delta),\\
\zeta_5(\delta)&=\mathbb E^-[\delta]-\frac 12\widetilde E_\star(\widetilde Q_\star;D^{ext}_\delta) =F(\varphi_\delta^-;D^{ext}_\delta)-F(\varphi;D^{ext}_\delta).
\end{align*}
Note that $\sigma_5(\delta,\xi)\to 0$ as $\xi\to 0$, thanks to Lemma~\ref{l:H1conv}. Since $\varphi$ minimizes $F(\cdot;D^{ext}_\eta)$ for every $\eta>0$ and satisfies the estimates of Lemma~\ref{l:estimphi0}, one can argue exactly as for \eqref{eq:limFeta} to prove that $\max(\zeta_5(\delta),0)\to 0$ as $\delta$ to 0. (In fact similar arguments will show that $\zeta_5(\delta)\to 0$, but here we only need the upper bound.)

Gathering the above estimates and recalling Lemma~\ref{l:compareEtau}, we deduce that
\begin{equation*}
\limsup_{\delta\to 0} \limsup_{\xi\to 0}\left[ \widetilde E_\xi(\widetilde P_\xi)- \widetilde E_\xi(\widetilde Q_\xi) \right] \leq 2\limsup_{\delta\to 0}\left(\mathbb E^+[\delta]-\mathbb E^-[\delta]\right)<0,
\end{equation*}
and so we can find $\delta,\xi>0$ such that the map $\widetilde P_\xi$ has strictly lower energy than $\widetilde Q_\xi$. This contradicts minimality of $\widetilde Q_\xi$ and concludes the proof of Theorem~\ref{thm:main}.

\bibliographystyle{acm}
\bibliography{colloid}

\end{document}